\setlist[enumerate]{noitemsep, topsep=2pt}
\setlist[itemize]{noitemsep, topsep=2pt}
\crefname{subroutine}{subroutine}{subroutines}
\Crefname{subroutine}{Subroutine}{Subroutines}
\crefname{line}{line}{lines}
\Crefname{algorithm}{Alg.}{Alg.}
\newtheorem{theorem}{Theorem}[section]
\newtheorem{lemma}[theorem]{Lemma}
\newtheorem{corollary}[theorem]{Corollary}
\newtheorem{assumption}[theorem]{Assumption}
\newtheorem{definition}[theorem]{Definition}
\numberwithin{equation}{section}
\pgfplotsset{compat=newest}
\definecolor{ao(english)}{rgb}{0.0,0.5,0.0}
\definecolor{cadmiumgreen}{rgb}{0.0,0.42,0.24}
\definecolor{darkpastelgreen}{rgb}{0.01,0.75,0.24}
\tikzset{
  materia/.style={draw, fill=blue!20, text width=6em, text centered, minimum height=1.5em, drop shadow},
  etape/.style={materia, text width=8em, minimum width=10em, minimum height=3em, rounded corners, drop shadow},
  linepart/.style={draw, thick, color=black!50, -Latex, dashed},
  line/.style={draw, thick, color=black!50, -Latex},
  back group/.style={fill=white!20,rounded corners, draw=black!50, dashed, inner xsep=15pt, inner ysep=10pt},
}
\newcommand{\arc}{\texttt{CubicReg}}
\newcommand{\arca}{\texttt{CubicReg-A}}
\newcommand{\utr}{\texttt{UTR}}
\newcommand{\atr}{\Cref{alg.accelerated utr}}
\newcommand{\atrms}{\Cref{alg.ms accelerated utr}}
\newcommand{\tr}{\textrm{TR}}
\newcommand{\trp}{\textrm{TR}_+}
\numberwithin{equation}{section}
\author[1]{\small Yuntian Jiang}
\author[2]{\small Chuwen Zhang}
\author[1]{\small Bo Jiang\thanks{Corresponding author: isyebojiang@gmail.com}}
\author[3]{\small Yinyu Ye}
\affil[1]{\footnotesize School of Information Management and Engineering\\ Shanghai University of Finance and Economics}
\affil[2]{\footnotesize Booth School of Business\\ University of Chicago}
\affil[3]{\footnotesize Antai College of Economics and Management\\ Shanghai Jiao Tong University}
\title{Accelerating Trust-Region Methods: An Attempt to Balance Global and Local Efficiency}
\begin{document}

\maketitle
\begin{abstract}
    Balancing global efficiency and local convergence remains a central challenge in second-order methods for unconstrained convex optimization problems. Newton’s method enjoys fast local convergence but may diverge when initialized far from the solution~\cite{more1982newton,dennisjr1996numerical}. In contrast, accelerated second-order methods provide global guarantees but typically suffer from slower local convergence~\cite{carmon2022optimal,chen2022accelerating,jiang2020unified}. This raises the fundamental question of to what extent global acceleration can be achieved without sacrificing strong local convergence. In this paper, we tackle this challenge by proposing the first accelerated trust-region-type methods and leveraging their inherent primal-dual information. Our primary contribution is \emph{Accelerated Trust-Region method with Local Detection}, which utilizes the Lagrange multiplier to detect local regions and achieves a global oracle complexity of $\tilde{O}(\epsilon^{-1/3})$, while maintaining quadratic local convergence. We further examine the trade-off that arises when global convergence is pushed to the limit. Specifically, we introduce the \emph{Accelerated Trust-Region Extragradient Method}, which achieves a global oracle complexity of $\tilde{O}(\epsilon^{-2/7})$ but no longer enjoys quadratic local convergence. This establishes a phase-transition-like in accelerated trust-region-type methods: quadratic local convergence is preserved under moderate global acceleration, but it breaks down when pursuing extreme global efficiency. Numerical experiments are consistent with the theoretical predictions and illustrate the global-local trade-off.
\end{abstract}

\section{Introduction}\label{Intro}
In this paper, we consider the unconstrained convex optimization problem
\begin{equation} \label{eq.main problem}
    f^* = \min_{x\in \mathbb{R}^n} f(x),
\end{equation}
where $f:\mathbb{R}^n \to \mathbb{R}$ is a convex function.
When the Hessian of the objective is available, second-order methods (SOMs) are particularly effective for solving~\eqref{eq.main problem}, especially when high-accuracy solutions are desired. Their power stems from a central property: Newton’s method achieves quadratic convergence in a neighborhood of any nondegenerate minimizer~\cite{polyak2007newton,more1982newton,nocedal1999numerical}. Many SOMs are thus designed to approximate the Newton direction, allowing them to inherit its excellent local convergence behavior.

In addition to local convergence, another important criterion for evaluating the effectiveness of optimization methods is their non-asymptotic global oracle complexity, namely, the number of oracle calls (i.e., second-order subproblems) required to obtain an approximate solution~\cite{cartis2022evaluation}.  Over the past decades, a few SOMs have been developed with such emphases ~\cite{nesterov2006cubic,mishchenko2023regularized,jiang2026beyond,he2025homogeneous,curtis2021trust,he2023newton,cartis2011adaptive,cartis2011adaptive_i,curtis2017trust}.
In the convex setting, several of these methods admit further improvements.

In this context, two major approaches have emerged. The first is the accelerated cubic regularized Newton (CRN) method~\cite{nesterov2008accelerating}, which builds upon the cubic regularization oracle~\cite{nesterov2006cubic} and achieves a global oracle complexity of $O(\epsilon^{-1/3})$.
The second is the accelerated Newton proximal extragradient (A-NPE) method~\cite{monteiro2013accelerated}, which attains an oracle complexity of $\tilde{O}(\epsilon^{-2/7})$~ using second-order oracles satisfying certain error-bound conditions.
These developments are largely built upon the estimating sequence technique~\cite{nesterov1983method,baes2009estimate} and have inspired a wide range of enhancements~\cite{jiang2021optimal,chen2022accelerating,lin2022explicit,agafonov2023inexact,agafonov2024advancing,jiang2024accelerated,huang2024inexact,doikov2020contracting}.
More recently, the logarithmic factor in the oracle complexity of A-NPE was eliminated, yielding the optimal complexity of $O(\epsilon^{-2/7})$~\cite{carmon2022optimal,kovalev2022first}. In fact, these acceleration techniques are broadly applicable and are by no means limited to the domain of SOMs~\cite{xu2017accelerated,xu2018accelerated,lan2012optimal,ghadimi2016accelerated}.

However, it is challenging to balance the global and local efficiency of second-order optimization algorithms. While the classical Newton's method possesses quadratic local convergence, it lacks global guarantees and often diverges when the starting point is far from the optimal solution~\cite{more1982newton,dennisjr1996numerical}. In contrast, stronger worst-case complexity results of accelerated SOMs often come at the cost of local efficiency.
Empirical evidence shows that in tasks such as logistic regression, accelerated SOMs are often outperformed by unaccelerated ones~\cite{jiang2020unified,huang2024inexact,chen2022accelerating,carmon2022optimal}, and practical implementations often switch to Newton-type steps to obtain highly accurate solutions~\cite{jiang2020unified,huang2024inexact,chen2022accelerating}. The above phenomenon underscores a fundamental dilemma in accelerated SOMs, and raises the following question:
\begin{center}
    \emph{How much can we accelerate SOMs globally while maintaining their excellent local convergence?}
\end{center}

\subsection{Motivation and our approach}\label{sec.motivation}
It is well understood that the strong local performance of SOMs ultimately stems from the Newton step. Thus, to design an accelerated method with excellent local performance, the second-order oracle should have the ability to detect the local geometry near the optimal solution, providing a signal indicating that the iterates may have
entered the quadratic convergence region, allowing a Newton step to be active.
For this purpose, one particular choice could be the trust-region (TR) oracle~\cite{more1983recent,conn2000trust}. At a given point \( x \in \mathbb{R}^n \), the classic TR oracle solves the following subproblem:
\begin{equation}
    \label{eq.classic tr subproblem}
    \begin{aligned}
        \underset{d \in \mathbb{R}^n}{\min}~ & \nabla f(x)^T d + \tfrac{1}{2} d^T \nabla^2 f(x)  d \\
        \text{s.t.}                          & \quad  \|d\| \le r,
    \end{aligned}
\end{equation}
where \( r \) denotes the trust-region radius. We denote $(d,\lambda) = \tr(x, r)$ as the primal-dual solution to this problem. For an overview of TR methods, we refer readers to two excellent monographs \cite{yuan2000review,yuan2015recent}.

Two features make \eqref{eq.classic tr subproblem} particularly appealing. Near a non-degenerate minimizer, the step reduces to the pure Newton direction once the ball constraint becomes inactive, preserving quadratic convergence. This inactive-constraint mechanism provides a direct way to recover the pure Newton direction, a feature that is particularly convenient for local detection.
On the practical side, the TR oracle has been thoroughly analyzed \cite{jiang2022holderian,rojas2001new,honguyen2017second,wang2022generalized,so2007on,so2011deterministic}, so that highly competitive solvers have been developed \cite{rojas2001new, adachi2017solving, adachi2019eigenvalue} for it and its extensions.
To this end, TR oracles have formed the algorithmic core of general-purpose nonlinear programming solvers such as Knitro~\cite{byrd2006knitro}, COPT~\cite{ge2022cardinal}, and PDFO~\cite{ragonneau2024pdfo}. Similar success can be found in other cross-cutting applications ranging from adversarial training~\cite{yao2019trust} to reinforcement learning~\cite{schulman2015trust}. These advantages serve as the motivation to use \tr{} to address the global-local balance in accelerated SOMs.

However, to our best knowledge,  TR methods have not been accelerated yet as
using TR oracles introduces technical challenges. Classical TR methods cannot match the global oracle complexity of other unaccelerated SOMs~\cite{cartis2010on,curtis2017trust} because the dual variable $\lambda$ associated with the ball constraint in \eqref{eq.classic tr subproblem} is determined a posteriori and requires careful monitoring~\cite{curtis2018concise}. Achieving global acceleration demands relative stability between this multiplier and the step size, which roughly means that $\lambda$ should not vary too fast relative to the step size, but enforcing such stability can interfere with transitions to Newton steps near the optimum.

To address this challenge, we design, for the first time, the accelerated trust-region methods that leverage a modified TR oracle, which we call the trust-region oracle for acceleration denoted as \ref{eq.newTR}. In particular, at a point $x \in \mathbb{R}^n$, the \ref{eq.newTR} oracle solves
\begin{equation*}
    \begin{aligned}
        \min_{d \in \mathbb{R}^n}~ & \nabla f(x)^\top d
        + \tfrac{1}{2} d^\top (\nabla^2 f(x) + \sigma I) d, \\
        \text{s.t.}                & ~ \|d\| \le r.
    \end{aligned}
    \tag*{(TR$_+$)}
\end{equation*}
The dual variable associated with the ball constraint is denoted by $\lambda$, which can detect local geometric information of the optimal solution.
The primal regularization $\sigma$ in \ref{eq.newTR} is used to effectively modulate $\lambda$, and maintains the stability required for global acceleration, enabling our accelerated trust-region method to systematically balance fast local convergence with provable global guarantee.
\subsection{Contribution}
This work provides an answer to the question posed earlier: we demonstrate that the compatibility between the global and local efficiency of the SOMs depends on the degree of global acceleration.
Specifically, based on the \ref{eq.newTR} oracles \cite{jiang2026beyond}, we develop two accelerated TR-type methods for problem~\eqref{eq.main problem}. One can simultaneously achieve both global acceleration and quadratic local convergence, and the other has a faster global convergence rate but loses local efficiency.

The first method, \emph{Accelerated Trust-Region Method with Local Detection} (\Cref{alg.accelerated utr}), implements a local detection mechanism (\Cref{alg.local detection}) that automatically identifies and ``dives into'' the quadratic convergence regions. This design yields an improved global worst-case oracle complexity of $\tilde{O}(\epsilon^{-1/3})$, while achieving a local quadratic rate of convergence. To our best knowledge, this is the first accelerated SOM that provably achieves both $\tilde{O}(\epsilon^{-1/3})$ global oracle complexity and local quadratic convergence.

The second method, \emph{Accelerated Trust-Region Extragradient Method} (\Cref{alg.ms accelerated utr}), is designed to pursuit the limits of global performance, reaching the near-optimal global oracle complexity of $\tilde{O}(\epsilon^{-2/7})$, but does not retain the same local quadratic guarantee. Thus, our analysis and experiments reveal a phase-transition-like phenomenon in accelerated trust-region type methods: pushing global efficiency to its limits naturally entails a trade-off with local performance.
This behavior is caused by the fact that, in \Cref{alg.ms accelerated utr}, the primal regularizer must be selected through an extragradient search coupled with the extrapolation point, which prevents the simple $\lambda=0$ local-detection mechanism used in \Cref{alg.accelerated utr}.

Our theoretical findings are corroborated by numerical experiments. On a global scale, the accelerated trust-region methods and the accelerated cubic regularized Newton method~\cite{nesterov2008accelerating} consistently outperform the non-accelerated counterparts~\cite{jiang2026beyond,nesterov2006cubic}. Locally, \Cref{alg.accelerated utr} exhibits quadratic convergence, and \atrms{} cannot retain superlinear convergence near the solution, similar to the empirical conclusions in \cite{carmon2022optimal,jiang2024accelerated,huang2024inexact}. These findings confirm the necessity of local detection for retaining local quadratic convergence in TR-type methods.

\subsection{Related works}
We review related works along two main lines: TR-type methods and accelerated SOMs that aim to achieve faster-than-sublinear efficiency. Despite the strong empirical success of TR-type methods, their theoretical analysis mainly focuses on nonconvex problems~\cite{ye2005second,curtis2017trust,curtis2021trust,zhang2025homogeneous,hamad2022consistently,grapiglia2015on,hamad2024simple,curtis2023worst_i,ouyang2025trust}, while their global oracle complexity analysis for convex problems remains incomplete.
In fact, in convex optimization, the classical TR has worse convergence rates than other unaccelerated mainstream SOMs. This gap was recently closed in~\cite{jiang2026beyond}. However, whether TR oracles can further benefit from acceleration, as achieved for other SOMs, remains an open question.

As discussed earlier, unaccelerated SOMs often exhibit superior local performance near the optimal solution compared with their accelerated counterparts. A common remedy for this issue is to employ restart strategies in acceleration frameworks~\cite{nesterov2018lectures}. However, this introduces a new challenge of deciding when to restart since the optimal restart frequency depends on unknown, problem-specific parameters. In contrast, when stronger global regularity conditions (akin to strong convexity) are imposed on the objective function, the restart schedule can be determined in a more structured form~\cite{ostroukhov2020tensor, lin2025perseus, huang2025approximation} or even becomes unnecessary~\cite{jiang2022generalized, marquesalves2022variants}. Under such assumptions, these methods can achieve strong global performance, and thus the intrinsic trade-off between global convergence and local efficiency becomes a secondary concern.

\paragraph{Organization of the paper}
The remaining part of this paper is organized as follows. Section~\ref{sec.priliminary} reviews the background on TR-type oracles and the use of estimate sequence techniques in accelerated second-order optimization. In Section~\ref{sec.first variant}, we present the \Cref{alg.accelerated utr}, which achieves an oracle complexity of $\tilde O(\epsilon^{-1/3})$ and exhibits quadratic convergence in high-accuracy regimes. Section~\ref{sec.second variant} introduces the \Cref{alg.ms accelerated utr}, which explores the phase transition of acceleration and attains a near-optimal oracle complexity of $\tilde O(\epsilon^{-2/7})$. Section~\ref{sec.experiments} reports numerical results on practical tasks, confirming the theoretical advantages of our proposed methods.

\section{Preliminaries}
\label{sec.priliminary}
Throughout the paper, $\|\cdot\|$ denotes the standard Euclidean norm in $\mathbb{R}^n$. We use $x^T y$ and $\langle x,y\rangle$ interchangeably to denote the inner product between $x,y\in\mathbb{R}^n$. For a matrix $X\in\mathbb{R}^{n\times n}$, $\|X\|$ denotes the induced $\ell_2$ norm. Unless otherwise stated, $\log(\cdot)$ denotes the base-$2$ logarithm, while $\ln(\cdot)$ denotes the natural logarithm. In the statements of technical results, symbols such as $x_k,v_k,y_k$, etc., refer to the iterates and steps generated by the algorithm, rather than to generic vectors.

We are interested in finding approximate solutions of \eqref{eq.main problem} defined as below, which is also used in \citet{monteiro2013accelerated}.
\begin{definition}
    \label{def.approximate solution}
    Let $0<\epsilon_f<1$ and $0<\epsilon_g<1$ be two prescribed tolerances. A point $x\in\mathbb{R}^n$ is called an
    $\epsilon_f$-function-value solution of problem~\eqref{eq.main problem} if
    \begin{equation}\label{eq.function-value-sol}
        f(x)-f^* \leq \epsilon_f.
    \end{equation}
    A point $x\in\mathbb{R}^n$ is called an
    $\epsilon_g$-stationary-solution of problem~\eqref{eq.main problem} if
    \begin{equation}\label{eq.first-order-stat}
        \|\nabla f(x)\| \leq \epsilon_g.
    \end{equation}
    A point $x\in\mathbb{R}^n$ is called an
    $(\epsilon_f,\epsilon_g)$-approximate solution of problem~\eqref{eq.main problem} if it satisfies \eqref{eq.function-value-sol} or \eqref{eq.first-order-stat}.
\end{definition}

The following assumptions are used throughout the paper. Some of them are invoked only when necessary and will be stated explicitly. Note that Assumption~\ref{assm.bounded hessian} is equivalent to the gradient of the objective function being Lipschitz.
\begin{assumption}
    \label{assm.lipschitz}
    The objective function $f: \mathbb{R}^n \to \mathbb{R}$ is twice continuously differentiable, and its Hessian is Lipschitz continuous. That is, there exists a constant $M > 0$ such that for all $x, y \in \mathbb{R}^n$,
    \begin{equation}
        \label{eq.main assm}
        \|\nabla^2 f(x) - \nabla^2 f(y)\| \leq M \|x - y\|. \end{equation}
\end{assumption}
\begin{assumption}
    \label{assm.solvable}
    The problem \eqref{eq.main problem} is solvable; that is, there exists $x^* \in \mathbb{R}^n$ such that
    \begin{equation*}
        f(x^*) = f^*:= \min_{x \in \mathbb{R}^n} f(x).
    \end{equation*}
\end{assumption}
\begin{assumption}
    \label{assm.bounded hessian}
    The Hessian of the objective function is bounded. That is, there exists a constant $\kappa_H > 0$ such that for all $x \in \mathbb{R}^n$, \begin{equation}
        \label{eq.bounded hessian}
        \|\nabla^2 f(x)\| \leq \kappa_H.
    \end{equation}
\end{assumption}
As a direct consequence of Assumption~\ref{assm.lipschitz}, we have
\begin{lemma}[Lemma 4.1.1, \citet{nesterov2018lectures}]\label{lem.lipschitz}
    If \(f:\mathbb{R}^n \mapsto \mathbb{R}\) satisfies Assumption~\ref{assm.lipschitz}, then for all \(x,y\in \mathbb{R}^n\), we have
    \begin{subequations}
        \begin{align}
            \label{eq.first-order exp}
             & \left\|\nabla f(y)-\nabla f(x)-\nabla^{2} f(x)(y-x)\right\|                       \leq \frac{M}{2}\|y-x\|^{2}  \\
            \label{eq.second-order exp}
             & \left|f(y)-f(x)-\nabla f(x)^T(y-x)-\frac{1}{2}(y-x)^T\nabla^{2} f(x)(y-x)\right|  \leq \frac{M}{6}\|y-x\|^{3}.
        \end{align}
    \end{subequations}
\end{lemma}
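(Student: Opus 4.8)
The plan is to derive both bounds from the fundamental theorem of calculus along the segment joining $x$ and $y$, using the Lipschitz condition \eqref{eq.main assm} from Assumption~\ref{assm.lipschitz} to control the deviation of the Hessian. Since $f$ is $C^{2}$, both the vector-valued and scalar versions of the fundamental theorem of calculus apply without difficulty.

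For \eqref{eq.first-order exp}, I would start from $\nabla f(y)-\nabla f(x) = \int_0^1 \nabla^2 f\bigl(x+t(y-x)\bigr)(y-x)\,dt$, subtract the trivial identity $\nabla^2 f(x)(y-x) = \int_0^1 \nabla^2 f(x)(y-x)\,dt$, and take norms:
\[
\bigl\|\nabla f(y)-\nabla f(x)-\nabla^2 f(x)(y-x)\bigr\|
\le \int_0^1 \bigl\|\nabla^2 f(x+t(y-x))-\nabla^2 f(x)\bigr\|\,\|y-x\|\,dt
\le \int_0^1 Mt\,\|y-x\|^2\,dt = \tfrac{M}{2}\|y-x\|^2,
\]
where the last inequality uses \eqref{eq.main assm} applied to the pair $x+t(y-x)$ and $x$, whose distance is $t\|y-x\|$; integrating $t$ over $[0,1]$ produces the factor $\tfrac12$.

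For \eqref{eq.second-order exp}, I would apply the scalar fundamental theorem of calculus to $\phi(t)=f(x+t(y-x))$, giving $f(y)-f(x)=\int_0^1 \nabla f(x+t(y-x))^T(y-x)\,dt$. Subtracting $\nabla f(x)^T(y-x)=\int_0^1 \nabla f(x)^T(y-x)\,dt$ and $\tfrac12 (y-x)^T\nabla^2 f(x)(y-x)=\int_0^1 t\,(y-x)^T\nabla^2 f(x)(y-x)\,dt$, the left-hand side of \eqref{eq.second-order exp} becomes $\bigl|\int_0^1 \bigl[\nabla f(x+t(y-x))-\nabla f(x)-t\,\nabla^2 f(x)(y-x)\bigr]^T(y-x)\,dt\bigr|$. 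By Cauchy--Schwarz and then the already-established bound \eqref{eq.first-order exp} applied to the points $x+t(y-x)$ and $x$ (for which the linear term naturally carries the factor $t$), this is at most $\int_0^1 \tfrac{M}{2}\|t(y-x)\|^2\,\|y-x\|\,dt=\tfrac{M}{2}\|y-x\|^3\int_0^1 t^2\,dt=\tfrac{M}{6}\|y-x\|^3$.

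This argument is essentially routine; the only points requiring care are the correct parametrization of the Hessian deviation (so that the Lipschitz constant is multiplied by $t\|y-x\|$ rather than $\|y-x\|$, which is exactly what produces the constants $\tfrac12$ and $\tfrac16$ after integration) and the reuse of the first bound inside the second. There is no substantial obstacle here, and the $C^{2}$ regularity assumed in Assumption~\ref{assm.lipschitz} is sufficient to justify both applications of the fundamental theorem of calculus.
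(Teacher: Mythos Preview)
Your argument is correct and is exactly the standard derivation; the paper does not give its own proof of this lemma but simply cites it from Nesterov's book, and what you have written is essentially that textbook proof.
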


To proceed, let us introduce a unified view of the updates of SOMs. In fact, almost all of the SOMs choose the step $d$ at a given point $x$ as
\begin{equation}
    \label{eq.second-order update}
    (\nabla^2 f(x) + \mu I )d = -\nabla f(x).
\end{equation}
The regularization parameter $\mu$ plays a critical role in the convergence analysis and varies among different SOMs. For example, in the classical TR method using the standard oracle~\eqref{eq.classic tr subproblem}, $\mu$ is fully determined as a posteriori, and is exactly the Lagrangian multiplier associated with the ball constraint in \eqref{eq.classic tr subproblem}. In the regularized Newton methods~\cite{mishchenko2023regularized}, $\mu$ can be explicitly selected as $\Theta(\sqrt{\|\nabla f(x)\|})$ such that an $O(\epsilon_f^{-1/2})$ method for finding $\epsilon_f$-function-value solutions can be designed. In the CRN method~\cite{nesterov2006cubic}, it requires some search procedures on $\mu$.

The following lemmas clarify $\mu$ plays an important role in the convergence analysis of SOMs, the proofs of which can be found in Appendix~\ref{sec.proof pre}.
\begin{lemma}
    \label{lem.bound next gradient}
    Suppose Assumption~\ref{assm.lipschitz} and \eqref{eq.second-order update} hold, then we have
    \begin{equation}
        \label{eq.bound next gradient}
        \|\nabla f(x+d)\| \leq \frac{M}{2}\|d\|^2+\mu\|d\|.
    \end{equation}
\end{lemma}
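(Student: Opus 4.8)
The plan is to combine the first-order Lipschitz estimate from \Cref{lem.lipschitz} with the defining relation \eqref{eq.second-order update}. First I would apply \eqref{eq.first-order exp} to the pair $(x,\,x+d)$, which gives
\[
\left\|\nabla f(x+d) - \nabla f(x) - \nabla^2 f(x)\, d\right\| \le \frac{M}{2}\|d\|^2 .
\]
Next I would use the update equation \eqref{eq.second-order update} to substitute $\nabla f(x) + \nabla^2 f(x)\, d = -\mu d$, so that
\[
\nabla f(x+d) = \left(\nabla f(x+d) - \nabla f(x) - \nabla^2 f(x)\, d\right) - \mu d .
\]

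Applying the triangle inequality to the last identity and inserting the displayed bound above then yields
\[
\|\nabla f(x+d)\| \le \left\|\nabla f(x+d) - \nabla f(x) - \nabla^2 f(x)\, d\right\| + \|\mu d\| \le \frac{M}{2}\|d\|^2 + \mu\|d\| ,
\]
using that $\mu \ge 0$, so $\|\mu d\| = \mu\|d\|$. This is precisely \eqref{eq.bound next gradient}, completing the argument.

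There is essentially no substantive obstacle here: the proof is a two-line manipulation that isolates the Lipschitz remainder and cancels the linear model $\nabla f(x) + \nabla^2 f(x)\, d$ against the regularization term via \eqref{eq.second-order update}. The only point worth flagging is the sign convention on $\mu$ — it is taken nonnegative, consistent with its role as the multiplier of the ball constraint in the TR oracle (or as an explicitly chosen nonnegative regularizer); if signed $\mu$ were allowed, the identical steps would give the bound $\tfrac{M}{2}\|d\|^2 + |\mu|\,\|d\|$.
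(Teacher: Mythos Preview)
Your proof is correct and essentially identical to the paper's own argument: both apply the first-order Lipschitz estimate \eqref{eq.first-order exp} at $(x,x+d)$, use \eqref{eq.second-order update} to replace $\nabla f(x)+\nabla^2 f(x)d$ by $-\mu d$, and finish with the triangle inequality. Your remark on the sign of $\mu$ is a harmless clarification not present in the paper.
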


\begin{lemma}
    \label{lem.inner product g and d}
    Suppose Assumption~\ref{assm.lipschitz} holds and that \eqref{eq.second-order update} hold for some $\mu\geq M\|d\|$, then we have
    \begin{equation}
        \label{eq.inner product g d}
        \langle \nabla f(x+d),-d \rangle \geq \frac{\|\nabla f(x+d)\|^2}{2\mu}+\frac{3}{8}\mu \|d\|^2.
    \end{equation}
    Further, if $\mu \leq 2M\|d\|$, we have
    \begin{equation}
        \label{eq.inner product g d advanced}
        \langle \nabla f(x+d),-d \rangle \geq \frac{\sqrt{6}}{6} \frac{\|\nabla f(x+d)\|^{3/2}}{\sqrt{M}}.
    \end{equation}
\end{lemma}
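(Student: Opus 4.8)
The plan is to expand $\langle \nabla f(x+d), -d\rangle$ using the second-order update equation and the Lipschitz estimate \eqref{eq.first-order exp}, then optimize. First I would write $\nabla f(x+d) = \nabla f(x) + \nabla^2 f(x) d + e$, where $e := \nabla f(x+d) - \nabla f(x) - \nabla^2 f(x) d$ satisfies $\|e\| \le \frac{M}{2}\|d\|^2$ by \eqref{eq.first-order exp}. Using \eqref{eq.second-order update}, i.e. $\nabla f(x) + \nabla^2 f(x) d = -\mu d$, this gives $\nabla f(x+d) = -\mu d + e$, hence $\langle \nabla f(x+d), -d\rangle = \mu\|d\|^2 - \langle e, d\rangle \ge \mu\|d\|^2 - \frac{M}{2}\|d\|^3$. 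Since $\mu \ge M\|d\|$, the term $\frac{M}{2}\|d\|^3 \le \frac{1}{2}\mu\|d\|^2$, so $\langle \nabla f(x+d), -d\rangle \ge \frac{1}{2}\mu\|d\|^2$; with the sharper bookkeeping $\frac{M}{2}\|d\|^3 \le \frac{1}{2}\mu\|d\|^2$ I actually want to keep a bit more, e.g. split as $\langle \nabla f(x+d), -d\rangle \ge \mu\|d\|^2 - \frac{1}{2}\mu\|d\|^2 = \frac{1}{2}\mu\|d\|^2$ or track the slack to reach the $\frac{3}{8}$ constant.

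For the first inequality \eqref{eq.inner product g d}, I would combine two lower bounds on $\langle \nabla f(x+d), -d\rangle$. From $\nabla f(x+d) = -\mu d + e$ we also get $\|d\| \ge \frac{1}{\mu}(\|\nabla f(x+d)\| - \|e\|) \ge \frac{1}{\mu}\|\nabla f(x+d)\| - \frac{M}{2\mu}\|d\|^2$, and more directly $\langle \nabla f(x+d), -\mu d\rangle = \|\nabla f(x+d)\|^2 - \langle \nabla f(x+d), e\rangle \ge \|\nabla f(x+d)\|^2 - \frac{M}{2}\|d\|^2 \|\nabla f(x+d)\|$. So $\mu \langle \nabla f(x+d), -d\rangle \ge \|\nabla f(x+d)\|^2 - \frac{M}{2}\|d\|^2\|\nabla f(x+d)\|$. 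I would then feed in the already-established $\langle \nabla f(x+d), -d\rangle \ge \frac{1}{2}\mu\|d\|^2 - (\text{slack})$ to bound the cross term $\frac{M}{2}\|d\|^2 \|\nabla f(x+d)\|$: write $\|\nabla f(x+d)\| \le \frac{M}{2}\|d\|^2 + \mu\|d\|$ from Lemma \ref{lem.bound next gradient}, and using $\mu \ge M\|d\|$ get $\|\nabla f(x+d)\| \le \frac{3}{2}\mu\|d\|$, so $\frac{M}{2}\|d\|^2\|\nabla f(x+d)\| \le \frac{M}{2}\|d\| \cdot \|d\| \cdot \frac{3}{2}\mu\|d\| \le \frac{3}{4}\mu \cdot \frac{M}{2}\|d\|^3 \le \frac{3}{8}\mu \cdot \mu\|d\|^2$. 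Rearranging $\mu\langle \nabla f(x+d),-d\rangle \ge \|\nabla f(x+d)\|^2 - \frac{3}{8}\mu^2\|d\|^2$ and dividing by $\mu$ does not directly give the stated form, so instead I would do the standard trick: take a convex combination of the bound $\langle\nabla f(x+d),-d\rangle \ge \frac{1}{\mu}\|\nabla f(x+d)\|^2 - \frac{M}{2\mu}\|d\|^2\|\nabla f(x+d)\|$ and the bound $\langle\nabla f(x+d),-d\rangle \ge \mu\|d\|^2 - \frac{M}{2}\|d\|^3$, then use $\mu \ge M\|d\|$ to absorb the negative $\|d\|^2\|\nabla f(x+d)\|$ and $\|d\|^3$ terms into a fraction of $\frac{3}{8}\mu\|d\|^2$, leaving $\frac{1}{2\mu}\|\nabla f(x+d)\|^2 + \frac{3}{8}\mu\|d\|^2$.

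For the second inequality \eqref{eq.inner product g d advanced}, the additional hypothesis $\mu \le 2M\|d\|$ lets me lower-bound $\|d\|$ in terms of $\mu$: $\|d\| \ge \frac{\mu}{2M}$. I would start from \eqref{eq.inner product g d}, namely $\langle \nabla f(x+d), -d\rangle \ge \frac{\|\nabla f(x+d)\|^2}{2\mu} + \frac{3}{8}\mu\|d\|^2$, and minimize the right-hand side over $\mu$ treating $g := \|\nabla f(x+d)\|$ and $\|d\|$ as linked by Lemma \ref{lem.bound next gradient}: $g \le \frac{M}{2}\|d\|^2 + \mu\|d\| \le \frac{M}{2}\|d\|^2 + 2M\|d\|^2 = \frac{5}{2}M\|d\|^2$, hence $\|d\|^2 \ge \frac{2g}{5M}$. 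Plugging in and also using $\mu \le 2M\|d\| \le 2M\sqrt{2g/M}\cdot$(const) to bound the first term from below in terms of $g^{3/2}/\sqrt{M}$, I would balance the two terms $\frac{g^2}{2\mu}$ and $\frac{3}{8}\mu\|d\|^2$: their product is $\ge \frac{3}{16} g^2 \|d\|^2 \ge \frac{3}{16}\cdot\frac{2}{5M} g^3$, so by AM--GM the sum is $\ge 2\sqrt{\frac{3}{40M}} g^{3/2} \ge \frac{\sqrt 6}{6}\frac{g^{3/2}}{\sqrt M}$ after checking the constant. The main obstacle is \emph{constant bookkeeping}: getting the exact $\frac{3}{8}$ and $\frac{\sqrt6}{6}$ requires being careful about which of the three bounds (the cross-term estimate, Lemma \ref{lem.bound next gradient}, and the hypothesis ranges on $\mu$) is applied where, and in what convex combination, rather than any deep difficulty — the structure is a routine ``expand, bound the error by Lipschitz, optimize over the free scalar'' argument.
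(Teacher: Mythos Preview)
Your plan for the first inequality has a real gap. The paper's proof does not use Cauchy--Schwarz on $\langle e,d\rangle$ and $\langle e,\nabla f(x+d)\rangle$ separately; instead it \emph{squares} the error bound $\|\nabla f(x+d)+\mu d\|\le \tfrac{M}{2}\|d\|^2$ (which is your identity $\nabla f(x+d)=-\mu d+e$ rewritten). Expanding the square gives
\[
\|\nabla f(x+d)\|^2 + 2\mu\langle\nabla f(x+d),d\rangle + \mu^2\|d\|^2 \le \tfrac{M^2}{4}\|d\|^4,
\]
and since $\mu\ge M\|d\|$ the right-hand side is at most $\tfrac14\mu^2\|d\|^2$; rearranging and dividing by $2\mu$ yields \eqref{eq.inner product g d} with the exact $(\tfrac12,\tfrac38)$ constants in one line.

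Your route---two separate Cauchy--Schwarz bounds followed by a convex combination---cannot recover these constants. Writing your two bounds as $X\ge \mu\|d\|^2-\tfrac{M}{2}\|d\|^3$ and $X\ge \tfrac{1}{\mu}g^2-\tfrac{M}{2\mu}\|d\|^2 g$ (with $g=\|\nabla f(x+d)\|$) and taking any convex combination $\alpha,\,1-\alpha$, then absorbing the cross term by Young's inequality, one checks that getting coefficient $\ge\tfrac12$ on $g^2/\mu$ forces $\alpha<\tfrac12$, while getting coefficient $\ge\tfrac38$ on $\mu\|d\|^2$ forces $\alpha>\tfrac34$. The loss is structural: bounding $\langle e,d\rangle$ and $\langle e,\nabla f(x+d)\rangle$ independently discards the identity $\|e\|^2=\|\nabla f(x+d)\|^2+2\mu\langle\nabla f(x+d),d\rangle+\mu^2\|d\|^2$, which is exactly what squaring preserves.

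For \eqref{eq.inner product g d advanced}, your AM--GM idea on the two terms of \eqref{eq.inner product g d} is a legitimate alternative to the paper's approach (the paper instead substitutes $\mu\le 2M\|d\|$ to get a one-variable function $h(t)=\tfrac{g^2}{4Mt}+\tfrac{3}{16}Mt^3$ and minimizes over $t=\|d\|$). Your computation $\|d\|^2\ge \tfrac{2g}{5M}$ and AM--GM gives $\sqrt{3/10}\,g^{3/2}/\sqrt{M}$, which indeed dominates $\tfrac{\sqrt6}{6}\,g^{3/2}/\sqrt{M}$. But this step presupposes \eqref{eq.inner product g d} with the stated constants, so you still need the squaring trick first.
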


Generally speaking, \Cref{lem.bound next gradient} shows that we can bound the next-iterate gradient norm by controlling $\mu$.
    {\Cref{lem.inner product g and d} serves as a key bridge to establish the global efficiency~(in fact, it is a modified version of~\cite[Lemma 4.2.5]{nesterov2018lectures} and~\cite[Corollary 1]{nesterov2021implementable} for TR-type oracle)}, which suggests that $\mu$ should be selected roughly as $\Theta(\|d\|)$. This condition generally fails for the classical TR oracle, as explained in~\Cref{sec.motivation} and also discussed in~\cite{curtis2018concise,curtis2017trust}. For this reason, we adopt the \ref{eq.newTR} oracle:
\begin{equation}\label{eq.newTR}
    \tag{TR$_+$}
    \begin{aligned}
        \min_{d \in \mathbb{R}^n}~ & \nabla f(x)^T d
        + \tfrac{1}{2} d^T (\nabla^2 f(x) + \sigma I) d, \\
        \text{s.t.}                & ~ \|d\| \le r,
    \end{aligned}
\end{equation}

We denote $(d, \lambda) = \trp(x, \sigma, r)$ as the primal-dual solution of the \ref{eq.newTR} subproblem, and in the following, all references to solution pairs $(d,\lambda)$ will refer to this \ref{eq.newTR} solution unless otherwise stated.
The global optimality conditions are as follows.
\begin{lemma}[Section 3,~\citet{conn2000trust}]\label{lem.optimal condition}
    The direction $d$ is the solution to \ref{eq.newTR} if and only if there exists a dual multiplier $\lambda \geq 0$ such that: \begin{subequations}\label{eq.sa}
        \begin{align}
            \label{eq.optcond primal}
             & \| d\| \leq r                                            \\
            \label{eq.optcond coml slack}
             & \lambda \left(\|d\|-r \right)=0                          \\ \label{eq.optcond firstorder}
             & (\nabla^2 f(x) + \sigma I + \lambda I ) d = -\nabla f(x) \\
            \label{eq.optcond secondorder}
             & \nabla^2 f(x) + \sigma I + \lambda I \succeq 0.
        \end{align}
    \end{subequations}
\end{lemma}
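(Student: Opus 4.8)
The plan is to treat \ref{eq.newTR} as a trust-region subproblem for the shifted matrix $H := \nabla^2 f(x)+\sigma I$ and linear term $g := \nabla f(x)$, write $q(d):=g^\top d+\tfrac12 d^\top H d$, and build everything on one algebraic identity: whenever $(d,\lambda)$ satisfies the stationarity relation $(H+\lambda I)d=-g$ (with no sign or definiteness assumption yet), expanding $q$ around $d$ and completing the square gives, for every $d'\in\mathbb{R}^n$,
\begin{equation*}
  q(d')-q(d)=\tfrac12\,(d'-d)^\top(H+\lambda I)(d'-d)+\tfrac12\,\lambda\bigl(\|d\|^2-\|d'\|^2\bigr).
\end{equation*}
I would verify this by a direct expansion; it is the single workhorse for both implications.

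For the ``if'' direction, assume $(d,\lambda)$ satisfies \eqref{eq.optcond primal}--\eqref{eq.optcond secondorder} and let $d'$ be any feasible point, $\|d'\|\le r$. The quadratic-form term above is nonnegative by \eqref{eq.optcond secondorder}. For the second term, complementary slackness \eqref{eq.optcond coml slack} leaves two possibilities: either $\lambda=0$, so the term vanishes, or $\|d\|=r\ge\|d'\|$, so $\lambda\ge0$ makes it nonnegative. Hence $q(d')\ge q(d)$ for every feasible $d'$, i.e., $d$ solves \ref{eq.newTR}.

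For the ``only if'' direction, a minimizer $d$ of $q$ over the compact ball $\{d:\|d\|\le r\}$ exists by continuity. If $\|d\|<r$, then $d$ is an unconstrained local minimizer of the quadratic $q$, so $Hd=-g$ and $H\succeq0$ by the first- and second-order conditions; taking $\lambda=0$ verifies all four relations (complementary slackness holding because $\lambda=0$). If $\|d\|=r$, the lone active constraint $\|d\|^2\le r^2$ has nonzero gradient $2d$, so the KKT conditions yield $\lambda\ge0$ with $(H+\lambda I)d=-g$, and \eqref{eq.optcond coml slack} holds since $\|d\|=r$. It remains to establish \eqref{eq.optcond secondorder}, $H+\lambda I\succeq0$. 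For any $v$ with $v^\top d\neq0$, choosing $\alpha=-2(v^\top d)/\|v\|^2$ makes $d':=d+\alpha v$ satisfy $\|d'\|=r$, so the identity collapses to $0\le q(d')-q(d)=\tfrac12\alpha^2\,v^\top(H+\lambda I)v$, giving $v^\top(H+\lambda I)v\ge0$; since $\{v:v^\top d\neq0\}$ is dense in $\mathbb{R}^n$ and $v\mapsto v^\top(H+\lambda I)v$ is continuous, the inequality extends to all $v$.

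The one genuinely nonroutine step is the last one — the positive semidefiniteness \eqref{eq.optcond secondorder} at a boundary minimizer — which is exactly the ``hidden convexity'' feature separating the trust-region subproblem from a generic nonconvex quadratic program. The subtlety is that tangent directions $v\perp d$ cannot be probed by feasible perturbations $d+\alpha v$ (they leave the ball to second order), so I rely on the density/continuity argument above; an equivalent route is a second-order expansion of $q$ along the retraction $\alpha\mapsto (r/\|d+\alpha v\|)(d+\alpha v)$. Everything else is bookkeeping. Since the statement is classical (\citet{conn2000trust}), the final write-up can alternatively just invoke the standard trust-region result after absorbing $\sigma I$ into the Hessian, and keep the argument above as a self-contained appendix proof.
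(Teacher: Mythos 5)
The paper does not prove this lemma; it is stated as a classical result and dispatched with a citation to Conn--Gould--Toint, so there is no in-paper argument to compare against. Your self-contained proof is correct, and the organization around the single exact identity
\begin{equation*}
  q(d')-q(d)=\tfrac12\,(d'-d)^\top(H+\lambda I)(d'-d)+\tfrac12\,\lambda\bigl(\|d\|^2-\|d'\|^2\bigr)
\end{equation*}
(valid whenever $(H+\lambda I)d=-g$, which I checked by expansion) is clean: it gives sufficiency in two lines and, via the reflection $\alpha=-2(v^\top d)/\|v\|^2$ staying on the sphere, also yields the global semidefiniteness $H+\lambda I\succeq 0$ at a boundary minimizer. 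You correctly identify that this last step is the nontrivial one --- ordinary second-order KKT necessity only delivers $v^\top(H+\lambda I)v\ge 0$ for $v$ in the tangent space $v^\top d=0$, not for all $v$ --- and your density-plus-continuity argument closes that gap properly. Two small matters worth a sentence in a write-up: LICQ for the boundary case requires $d\neq0$, which is automatic once $r>0$ (and the $r=0$ case is degenerate and should be excluded by convention); and it is worth stating explicitly that $\alpha\neq0$ precisely because $v^\top d\neq0$, so the quadratic form can be divided out. Since the paper treats the lemma as black-box, either keeping your proof as an appendix or simply invoking the cited reference (after absorbing $\sigma I$ into the Hessian, as you note) is appropriate.
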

In the \ref{eq.newTR} oracle, the regularization parameter decomposes as $\mu=\sigma+\lambda$, aggregating \emph{a prior} regularizer from the primal problem and \emph{a posterior} dual variable. As we will show in the following, this primal-dual combo of information has the potential to balance global and local behavior.
\section{Variant~I: Balancing the Global-Local Trade-off}\label{sec.first variant}
In this section, we propose the first accelerated TR-type method, which applies the primal-dual structure of \ref{eq.newTR} into the estimating sequence \cite{nesterov2008accelerating}. The global oracle complexity for finding $(\epsilon_f,\epsilon_g)$-approximate solution is improved to $\tilde O(\epsilon_f^{-1/3})$ as opposed to the recent $O(\epsilon_f^{-1/2})$ in convex optimization \cite{jiang2026beyond}, while the local rate of convergence to the non-degenerate solution remains quadratic.
\subsection{Algorithm design}
\Cref{alg.accelerated utr} and its local detection mechanism (\Cref{alg.local detection}) are presented below.
\begin{algorithm}[ht]
    \caption{Accelerated Trust-Region Method with Local Detection (\atr{})}\label{alg.accelerated utr}
    \begin{algorithmic}[1]
        \STATE \textbf{input:} initial point $x_0=v_0\in \mathbb{R}^n$, $s_0=0$, stationarity tolerance $\epsilon_g>0$
        \FOR{$k=0, 1, 2, \ldots$}
        \STATE $y_k = \frac{k}{k+3}x_k+\frac{3}{k+3}v_k$
        \STATE $(\sigma_k,r_k) = (\tfrac{\sqrt{2M}}{2}\|\nabla f(y_k)\|^{1/2},\tfrac{1}{\sqrt{2M}}\|\nabla f(y_k)\|^{1/2})$
        \label{line.set-params}
        \STATE $(d_k,\lambda_k) = \trp(y_k,\sigma_k,r_k)$
        \STATE $x_{k+1} = y_k+d_k$
        \IF{$\lambda_k=0$} \label{line.if}
        \STATE \texttt{\# Enter Local Detection (\Cref{alg.local detection})}
        \STATE $(x_{k+1},\text{ET}) = \operatorname{LD}(y_k,\tfrac{\sqrt{2M}}{2}\|\nabla f(y_k)\|^{1/2},d_k,\epsilon_g)$
        \label{line.early terminate main}
        \IF{$\text{ET}$}
        \STATE \textbf{terminate and output $x_{k+1}$}
        \ENDIF
        \ENDIF
        \STATE $s_{k+1} = s_{k}+\tfrac{(k+1)(k+2)}{2}\nabla f(x_{k+1})$
        \label{line.update-s}
        \STATE $v_{k+1} = v_0-\sqrt{\tfrac{1}{24M\|s_{k+1}\|}}s_{k+1}$
        \label{line.update-v}
        \ENDFOR
    \end{algorithmic}
\end{algorithm}
\begin{subroutine}[!ht]
    \small
    \caption{Local Detection (LD) }\label{alg.local detection}
    \begin{algorithmic}[1]
        \STATE{\textbf{input:} $z_0=y\in \mathbb{R}^n$, $\mu_+$, $d_+$, $\epsilon_g$;
            \label{line.right bracket point}
        }
        \STATE{\texttt{\# Track 0: Stationary check}}
        \IF{$\|d_+\| \leq \min\{\frac{\epsilon_g}{2\kappa_H}, \sqrt{\frac{\epsilon_g}{M}}\}$}\label{line.if coro 3.1}
        \STATE{\textbf{output} $y+d_+$, ET =True
            \label{line.et 2}
        }
        \ENDIF
        \STATE{\texttt{\# Track 1: Local Diving}}
        \FOR{$i=1,\ldots,\left \lceil\log\frac{\|\nabla f(y)\|}{\epsilon_g} \right\rceil$}\label{line.track 1 number of iter}
        \STATE{$z_i = z_{i-1}-\nabla^2f(z_{i-1})^{-1}\nabla f(z_{i-1})$}
        \IF{$\|\nabla f(z_i)\|\leq \epsilon_g$}
        \STATE{\textbf{output} $z_i$, ET=True
            \label{line.et 1}
        }
        \ENDIF
        \ENDFOR
        \STATE{\texttt{\# Track 2: Ratio Bracketing and Bisection~(R\&B)}}
        \IF{$\frac{\mu_+}{\|d_+\|}\leq 2M$
            \label{line.check right bracket point}}
        \STATE{\textbf{output} $y+d_+$, ET =False
            \hfill\texttt{\# Check if the bisection is needed}
        }
        \ELSE
        \STATE{$r_- = \|d_+\|$, $r_+ =  \|(\nabla^2f(y)+M\|d_+\| I)^{-1}\nabla f(y)\|$
            \label{line.left bracketing point}
        }
        \ENDIF
        \WHILE{$\frac{\mu}{\|d\|}<M$ or $\frac{\mu}{\|d\|}>2M$}
        \STATE{$r = \tfrac{r_-+r_+}{2}$
            \hfill\texttt{\# Perform bisection over $r \in [r_-, r_+]$}
        }
        \STATE{$(d,\mu) = \trp(y,0,r)$
            \label{line.linear system oracle}
        }
        \IF{$\frac{\mu}{\|d\|}<M$}
        \STATE{$r_+=r$}
        \ELSIF{$\tfrac{\mu}{\|d\|}>2M$}
        \STATE{$r_- = r$}
        \ENDIF
        \ENDWHILE
        \STATE{\textbf{output} $y + d$, ET= False}
    \end{algorithmic}
    \normalsize
\end{subroutine}
The overall regularization in \ref{eq.newTR} is a combination of primal and dual information: $\mu_k = \sigma_k + \lambda_k$. The primal part $\sigma_k$ is chosen carefully to make the dual part more stable and still carry local information, while the dual part $\lambda_k$ is used as an indicator to check whether the current iterate is close to the local optimum.

The case $\lambda_k > 0$ typically indicates that the current iterate is still far from a local optimum. In this regime, as shown later in \Cref{lem.ratio pass nonzero}, the overall regularization parameter $\mu_k$ becomes automatically proportional to the step size, which justifies the updates of the estimating sequence in the acceleration framework.

The case $\lambda_k = 0$ is a signal that the current extrapolation point $y_k$ may have entered the local quadratic convergence region of Newton's method. This triggers the Local Detection procedure (LD, \Cref{alg.local detection}), which consists of three separate tracks named stationary check, local diving, and ratio bracketing and bisection (R\&B), respectively, probing the local geometry around $y_k$.

In \Cref{alg.local detection}, stationary check ({Track 0}) is a simple examination to check if the current step is small enough, which justifies an $\epsilon_g$-stationary-solution (see \Cref{coro.second et}).
In local diving ({Track 1}), we run Newton's method (equivalently, \ref{eq.newTR}$(x, 0, r)$ with sufficiently large $r$) starting from $y_k$, the resulting sequence is indexed by $i$ and denoted by $\{z_i\}$. As discussed in Section~\ref{sec.local}, if $y_k$ indeed lies within the quadratic convergence region, the resulting sequence $\{z_i\}$ will exhibit quadratic convergence, which justifies early termination of \Cref{alg.accelerated utr}. Indeed, we could stop diving if the $\|\nabla f(z_i)\|$ is increasing, or a degenerate Hessian is detected.

In R\&B ({Track 2}), we attempt to find a step $d_k$ whose size is proportional to the regularization parameter $\mu_k$. This step certifies the updates of the estimate sequence and thus preserves global acceleration.

In summary, \Cref{alg.accelerated utr} admits two possible termination routes: when $y_k$ enters the region of local quadratic convergence, the algorithm detects this and terminates early with ET$=$True~(see lines~\ref{line.et 2} and \ref{line.et 1} in \Cref{alg.local detection}). Otherwise, it continues to follow a path of globally accelerated sequences with ET$=$False.

\subsection{Global convergence in function value}
Now we analyze the worst-case global oracle complexity of \Cref{alg.accelerated utr} for computing an $(\epsilon_f,\epsilon_g)$-approximate solution as defined in \Cref{def.approximate solution}. Since our goal here is to analyze the worst-case global oracle complexity, we assume the iterates do not step into local diving of \Cref{alg.local detection}, which is more relevant to local convergence analysis (cf. \Cref{sec.local}). Our analysis follows a standard procedure: we first establish the \textit{iteration complexity} of the accelerated sequence in~\Cref{alg.accelerated utr}, and subsequently determine the number of oracle calls required per iteration in~\Cref{alg.local detection}, thereby obtaining the final \textit{oracle complexity} bound.
\subsubsection{Iterations complexity of \Cref{alg.accelerated utr}}
In this section, we show that \Cref{alg.accelerated utr} takes $O(\epsilon_f^{-1/3})$ to find an $\epsilon_f$-function-value solution as in \Cref{def.approximate solution}.

When using the estimating sequence technique in~\cite{nesterov2008accelerating}, the globally accelerated convergence is guaranteed by maintaining the following two relations across iterations
\begin{subequations}\label{eq.es relation}
    \begin{align}
        \label{eq.es relation 1}
         & \phi_k^*  \geq A_k f(x_k), \quad  \phi_k^*= \phi_k(v_k) =\min_{x\in\mathbb{R}^n}\phi_k(x) \\
        \label{eq.es relation 2}
         & \phi_k(x)\leq A_kf(x)+\phi_0(x), \quad \forall x\in\mathbb{R}^n.
    \end{align}
\end{subequations}
Here $\{\phi_k(x)\}_{k\geq 0}$ is a sequence of functions that approximate $f(x)$ from with $\{v_k\}_{k\geq0}$ being the optimum, $\{A_k\}_{k\geq 0}$ is a sequence that measures the convergence rate of the sequence $\{x_k\}_{k\geq 0}$. As a result of \eqref{eq.es relation}, we have
\begin{equation*}
    f(x_k)-f(x)\leq \frac{1}{A_k}\phi_0(x),\quad \forall x\in \mathbb{R}^n, \  k\geq 1.
\end{equation*}
Therefore, the global iteration complexity directly follows from the choice of $A_k$ and $\phi_0(x)$.

For the upcoming analysis, we first complete the definition of $A_k,\phi_k(x)$ in the estimating sequence framework that was not explicitly presented in the algorithm:
\begin{equation}
    \label{eq.update a and A}
    a_k = \frac{(k+1)(k+2)}{2}, \quad A_k=\frac{k(k+1)(k+2)}{6},
\end{equation}
\begin{align}\label{eq.update phi}
    \phi_0(x) = 8M\|x-x_0\|^3, \ \phi_{k+1}(x) =\phi_k(x)+a_k\left (f(x_{k+1})+\langle \nabla f(x_{k+1}),x-x_{k+1}\rangle \right ).
\end{align}
We introduce some basic properties of the estimating sequence in \Cref{sec.proof pre}, which are from \citet{nesterov2018lectures}. To proceed, we categorize the iterates generated by \Cref{alg.accelerated utr} into two disjoint sets based on the value of the multiplier, according to whether the iterations invoke local detection:
\begin{equation}
    \label{eq.outer index catagory}
    \mathcal{Z} = \left \{k\in \mathbb{N} | \lambda_k=0\right\}, \quad \mathcal{N} = \left \{ k\in\mathbb{N} |  \lambda_k>0\right\}.
\end{equation}
We first demonstrate that the ratio between the regularizer and the step size remains stable for $k\in\mathcal{N}$ as shown in \eqref{eq.output of alg2} which exhibits the power of \ref{eq.newTR}.
\begin{lemma}
    \label{lem.ratio pass nonzero}
    In the $k$-th iteration of \Cref{alg.accelerated utr}, if $k\in\mathcal{N}$, then
    \begin{equation}
        \label{eq.output of alg2}
        d_k = -(\nabla^2 f(y_k)+\mu_k I )^{-1}\nabla f(y_k), \quad
        M\|d_k\|\leq \mu_k \leq 2M\|d_k\|.
    \end{equation}
\end{lemma}
\begin{proof}
    On the one hand, by our choice of $(\sigma_k,r_k)$ in Line~\ref{line.set-params} of \Cref{alg.accelerated utr}, we have $$\mu_k=\sigma_k+\lambda_k\geq\sigma_k= Mr_k= M\|d_k\|.$$
    On the other hand, since $\nabla^2 f(y_k)\succeq 0$, we have
    \begin{equation*}
        \mu_k\|d_k\|=\|\mu_kd_k\|\leq \|(\nabla^2 f(y_k)+\mu_k I)d_k\|=\|\nabla f(y_k)\|.
    \end{equation*}
    Note that $\lambda_k>0$ implies $\|d_k\|=r_k=\frac{1}{\sqrt{2M}}\|\nabla f(y_k)\|^{1/2}$. As a result, we have
    \begin{equation*}
        \mu_k\leq \frac{\|\nabla f(y_k)\|}{\|d_k\|}=\sqrt{2M}\|\nabla f(y_k)\|^{1/2}=2M\|d_k\|.
    \end{equation*}
    Therefore, \eqref{eq.output of alg2} holds.
\end{proof}
To maintain the flow of the complexity analysis, we temporarily assume the output of R\&B of \Cref{alg.local detection} satisfies~\eqref{eq.output of alg2} as well, i.e., \eqref{eq.output of alg2} holds for $k\in\mathcal{Z}$, whenever ET=False (we will verify this in \Cref{lem.rboutput}). Now we show that the relation \eqref{eq.es relation} holds during the update of \Cref{alg.accelerated utr}.
\begin{lemma}\label{lem.estimate sequence}
    Suppose Assumption~\ref{assm.lipschitz} holds, and \Cref{eq.output of alg2} holds for the output of \Cref{alg.local detection}. Then we can guarantee the following for all $k\geq 0$:
    \begin{equation}
        \label{eq.estimating sequence relation}
        A_k f(x_k)\leq \phi_k^*  \leq \phi_k(x)\leq A_k f(x)+\phi_0(x), \quad \forall x\in \mathbb{R}^n.
    \end{equation}
\end{lemma}
\begin{proof}
    We conduct the proof by induction. Note that $A_0 = 0$, so \eqref{eq.estimating sequence relation} holds for $i=0$. Suppose it also holds for $i=k$.
    We first check \eqref{eq.es relation 2} for $i=k+1$:
    \begin{align*}
        \phi_{k+1}(x) & =_{(a)}\phi_k(x)+a_k\left(f(x_{k+1})+\langle \nabla f(x_{k+1}),x-x_{k+1}\rangle \right)               \\
                      & \leq_{(b)} A_k f(x)+\phi_0(x) +a_k\left(f(x_{k+1})+\langle \nabla f(x_{k+1}),x-x_{k+1}\rangle \right) \\
                      & \leq_{(c)} A_k f(x)+\phi_0(x) +a_k f(x)                                                               \\
                      & =_{(d)}A_{k+1} f(x)+\phi_0(x),
    \end{align*}
    where $(a)$ is from \eqref{eq.update phi}, $(b)$ is from the induction hypothesis, $(c)$ is from the convexity of $f$, and $(d)$ is from \eqref{eq.update a and A}.
    We now check \eqref{eq.es relation 1} for $i=k+1$,
    \begin{align*}
        \phi_{k+1}^*
         & = \min_{x\in\mathbb{R}^n} \left\{ \phi_k(x) + a_k \left(f(x_{k+1}) + \langle \nabla f(x_{k+1}), x - x_{k+1} \rangle \right) \right\}                                                                                        \\
         & \geq_{(a)} \min_{x\in\mathbb{R}^n} \Bigl\{ \phi_k^* + 4M \|x - v_k\|^3
        + a_k \left(f(x_{k+1}) + \langle \nabla f(x_{k+1}), x - x_{k+1} \rangle \right) \Bigr\}                                                                                                                                        \\
         & \geq_{(b)} \min_{x\in\mathbb{R}^n} \Bigl\{ A_k f(x_k) + 4M \|x - v_k\|^3
        + a_k \left(f(x_{k+1}) + \langle \nabla f(x_{k+1}), x - x_{k+1} \rangle \right) \Bigr\}                                                                                                                                        \\
         & \geq_{(c)} \min_{x\in\mathbb{R}^n} \Bigl\{ A_k f(x_{k+1}) + A_k \langle \nabla f(x_{k+1}), x_k - x_{k+1} \rangle + 4M \|x - v_k\|^3 + a_k \left(f(x_{k+1}) + \langle \nabla f(x_{k+1}), x - x_{k+1} \rangle \right) \Bigr\} \\
         & =_{(d)} A_{k+1} f(x_{k+1})
        + A_{k+1} \left\langle \nabla f(x_{k+1}),
        \frac{A_k}{A_{k+1}} x_k + \frac{a_k}{A_{k+1}} v_k - x_{k+1} \right\rangle - \frac{a_k^{3/2}}{3\sqrt{3} \cdot \sqrt{M}} \|\nabla f(x_{k+1})\|^{3/2}                                                                             \\
         & =_{(e)} A_{k+1} f(x_{k+1}) + A_{k+1} \langle \nabla f(x_{k+1}), y_k - x_{k+1} \rangle
        - \frac{a_k^{3/2}}{3\sqrt{3} \cdot \sqrt{M}} \|\nabla f(x_{k+1})\|^{3/2}                                                                                                                                                       \\
         & = A_{k+1} f(x_{k+1}) + A_{k+1} \left(
        \langle \nabla f(x_{k+1}), y_k - x_{k+1} \rangle
        - A_{k+1}^{-1} \cdot \frac{a_k^{3/2}}{3\sqrt{3} \cdot \sqrt{M}}
        \|\nabla f(x_{k+1})\|^{3/2} \right)                                                                                                                                                                                            \\
         & \geq_{(f)} A_{k+1} f(x_{k+1}) + A_{k+1} \left(
        \langle \nabla f(x_{k+1}), y_k - x_{k+1} \rangle
        - \frac{\sqrt{6}}{6\sqrt{M}} \|\nabla f(x_{k+1})\|^{3/2} \right).
    \end{align*}
    In the above analysis, $(a)$ is from \eqref{eq.phi_k phi_low}, $(b)$ is from the induction hypothesis, $(c)$ is from the convexity of $f$, $(d)$ is from
    \begin{equation*}
        \begin{aligned}
             & \min_{x} \Bigl\{ A_k f(x_{k+1}) + A_k \langle \nabla f(x_{k+1}), x_k - x_{k+1} \rangle + 4M \|x - v_k\|^3 + a_k \left(f(x_{k+1}) + \langle \nabla f(x_{k+1}), (x - v_k) + v_k - x_{k+1} \rangle \right) \Bigr\} \\
             & \quad = A_{k+1} f(x_{k+1}) + \left\langle \nabla f(x_{k+1}), A_k x_k - A_{k+1} x_{k+1} + a_k v_k \right\rangle + \min_{y} \left\{ 4M \|y\|^3 + \langle a_k \nabla f(x_{k+1}), y \rangle \right\}                \\
             & \quad = A_{k+1} f(x_{k+1}) + A_{k+1} \left\langle \nabla f(x_{k+1}), \frac{A_k}{A_{k+1}} x_k + \frac{a_k}{A_{k+1}} v_k - x_{k+1} \right\rangle - \frac{1}{3\sqrt{3M}} \|a_k \nabla f(x_{k+1})\|^{3/2};
        \end{aligned}
    \end{equation*}
    $(e)$ is from \eqref{eq.update a and A}, and $(f)$ is from \eqref{eq.magnitude a and A}. Now it reduces to proving
    \begin{equation*}
        \langle \nabla f(x_{k+1}),y_k-x_{k+1} \rangle -\frac{\sqrt{6}}{6\sqrt{M}}\|\nabla f(x_{k+1})\|^{3/2} \geq 0.
    \end{equation*}
    Using \Cref{lem.ratio pass nonzero},
    \begin{equation*}
        M\|d_k\| \leq \mu_k \leq 2M\|d_k\|.
    \end{equation*}
    As a result of \eqref{eq.inner product g d advanced}, it holds that
    \begin{equation*}
        \langle \nabla f(x_{k+1}),y_k-x_{k+1} \rangle -\frac{\sqrt{6}}{6\sqrt{M}}\|\nabla f(x_{k+1})\|^{3/2} \geq 0.
    \end{equation*}
    Thus the proof is complete.
\end{proof}
We have the following global iteration complexity result of \Cref{alg.accelerated utr}.
\begin{theorem}
    \label{thm.convergence outer loop}
    Suppose Assumption~\ref{assm.lipschitz} and \ref{assm.solvable}
    hold, and suppose that \Cref{eq.output of alg2} is satisfied at every non-terminated
    iteration. Then for every generated iterate $x_k$, $k\geq 1$, we have
    \[
        f(x_k)-f^*
        \leq
        \frac{48MD_0^3}{k(k+1)(k+2)}, \quad D_0 = \|x_0-x^*\|.
    \]
\end{theorem}
\begin{proof}
    By \Cref{lem.estimate sequence} with $x=x^*$, for every $k\ge1$, $f(x_k)-f^*\le \phi_0(x^*)/A_k=8M\|x_0-x^*\|^3/(k(k+1)(k+2)/6).$
\end{proof}
\begin{corollary}
    \label{coro.function value complexity}
    Suppose Assumption~\ref{assm.lipschitz} and Assumption~\ref{assm.solvable}
    hold, and suppose that the ratio condition required by
    \Cref{lem.estimate sequence} is satisfied at every non-terminated
    iteration. Then it takes \Cref{alg.accelerated utr} $O\left(\frac{M^{1/3}D_0}{\epsilon_f^{1/3}}\right)$ iterations to find $\epsilon_f$-function-value solutions as in \Cref{def.approximate solution}.
\end{corollary}
\subsubsection{Number of oracle calls of \Cref{alg.local detection}}\label{sec.line search}
We now estimate the number of oracle calls in \Cref{alg.local detection}.
As the argument is quite technical, some of the proofs are postponed to Appendix~\ref{sec.proof variant 1}.
In this part of the analysis, we may assume that $\|d_+\| > \min\{\frac{\epsilon_g}{2\kappa_H},\sqrt{\frac{\epsilon_g}{M}}\}$; otherwise, it ends at {Track 0}. Furthermore, since we also limit the calls in {Track~1}, we only have to estimate number of oracle calls in Track 2 (R\&B), i.e., the complexity of bisection over radius $r$ on the interval $[r_-,r_+]$. 
We will show that the number of \ref{eq.newTR} oracles needed is bounded above by $O\left(\log\left(1/{\epsilon_g}\right)\right)$.

In each step of the R\&B, the update $(d,\mu)=\trp(y,0,r)$ with $r\in[r_-,r_+]$ defines the correspondence between $r$ and $\mu$ over the interval $[r_-,r_+]$ and $[\mu_-,\mu_+]$, where $\mu_-$ and $\mu_+$ have the following value due to the mechanism of \Cref{alg.accelerated utr} and \Cref{alg.local detection}
\begin{equation}\label{eq.value mu}
    \mu_- = M\|d_+\|, \quad \mu_+=\frac{\sqrt{2M}}{2}\|\nabla f(y)\|^{1/2}.
\end{equation}
We show that this correspondence is one-to-one.

\begin{lemma}
    \label{lem.one-to-one correspondence}
    For any $r \in [r_-, r_+]$, there exists a unique $\mu \in [\mu_-, \mu_+]$ such that
    \[
        r = \|(\nabla^2 f(y) + \mu I)^{-1} \nabla f(y)\| := r_y(\mu),
    \]
    establishing a one-to-one correspondence between $r$ and $\mu$.
    In particular, we may write $\mu = r_y^{-1}(r)$,
    and the correspondence of endpoints satisfies
    \[
        r_- = r_y(\mu_+),
        \quad
        r_+ = r_y(\mu_-).
    \]
\end{lemma}
\begin{proof}
    The endpoints correspondence between $r_- \to \mu_+$ and $r_+ \to \mu_-$ follows from line~\ref{line.left bracketing point} of \Cref{alg.local detection} and \eqref{eq.value mu}.
    For any $r$, the existence of such $\mu$ follows from $(d,\mu)=\trp(y,0,r)$ and $\eqref{eq.optcond firstorder}$. For uniqueness, noting $\nabla^2 f(y) + \mu I \succ 0$ for any $\mu \in [\mu_-, \mu_+]$,
    we can apply the eigenvalue decomposition
    $\nabla^2 f(y) = V \Lambda V^\top$ with $\Lambda = \mathrm{diag}(\zeta_1,\dots,\zeta_n)$ and $V$ orthogonal.
    Then
    \[
        r = \bigl\|(\nabla^2 f(y) + \mu I)^{-1}\nabla f(y)\bigr\|
        = \sqrt{\sum_{i=1}^n \frac{\beta_i^2}{(\zeta_i + \mu)^2}},
    \]
    where $\beta_i = \nabla f(y)^\top v_i$. Since $\zeta_i \ge 0$ and $\mu > 0$, every term in the summation is continuous and strictly decreasing in $\mu$, and thus $r_y(\mu)$ is continuous and strictly decreasing on $[\mu_-, \mu_+]$.
    Consequently, for any $r \in [r_-, r_+]$, there exists a unique $\mu \in [\mu_-, \mu_+]$ such that
    \[
        r = r_y(\mu) = \bigl\|(\nabla^2 f(y) + \mu I)^{-1}\nabla f(y)\bigr\|.
    \]
    Hence, the correspondence between $r$ and $\mu$ is one-to-one, establishing the desired relation $\mu = r_y^{-1}(r)$.
\end{proof}
Using this one-to-one correspondence, when we perform bisection on $r$ over the interval $[r_-,r_+]$, we also implicitly perform bisection on $\mu$ over $[\mu_-,\mu_+]$. The logic of our analyses is to first identify the target interval of $\mu$ and then use the one-to-one correspondence again to transfer it to the target interval of $r$. This observation motivates us to introduce the following auxiliary functions of $\mu$.
\begin{equation}
    \label{eq.auxiliary function for bisection}
    g_y(\mu) =\frac{\mu}{r_y(\mu)}=\frac{\mu}{\| (\nabla^2 f(y)+\mu I)^{-1}\nabla f(y)\|}.
\end{equation}
Thus, if ET=False, we have successfully located some $\mu$ such that
\begin{equation*}
    M \leq g_y(\mu) \leq 2M,
\end{equation*}
matching the condition needed by \Cref{thm.convergence outer loop}. For completeness, we state the following lemma and the proof is immediate.
\begin{lemma}\label{lem.rboutput}
    In \textnormal{R\&B} of \Cref{alg.local detection}, if ET=False, then \Cref{eq.output of alg2} holds.
\end{lemma}
The following lemma validates the choices of the bracketing points at line~\ref{line.left bracketing point} of \Cref{alg.local detection}.
\begin{lemma}
    \label{lem.valid bracketing}
    In \textnormal{R\&B} of \Cref{alg.local detection}, when the bisection begins, we have
    \begin{equation*}
        g_y(\mu_-) < M, \quad g_y(\mu_+) >2M.
    \end{equation*}
\end{lemma}
\begin{proof}
    From line~\ref{line.check right bracket point} in \Cref{alg.local detection}, we have $g_y(\mu_+)=\frac{\mu_+}{\|d_+\|} >2M$ whenever \Cref{alg.local detection} enters bisection procedure.

    Denote $d_- = -\left(\nabla^2 f(y)+\mu_-I\right)^{-1}\nabla f(y)$. Since $\mu_- <\mu_+$, then
    \begin{equation*}
        \|d_-\|=\| (\nabla ^2 f(y)+\mu_- I )^{-1} \nabla f(y)\|>\| (\nabla ^2 f(y)+\mu_+ I )^{-1} \nabla f(y)\| = \|d_+\|,
    \end{equation*}
    therefore $g_y(\mu_-) = \frac{\mu_-}{\|d_-\|}=\frac{M\|d_+\|}{\|d_-\|}< M$.
\end{proof}
Next, we identify the length of the target interval for $\mu$.
\begin{lemma}
    \label{lem.bound target interval}
    There exist $\mu_l,\mu_u$ with $\mu_-<\mu_l<\mu_u<\mu_+$ such that
    \begin{equation}\label{eq.target interval bracketing points}
        g_{y}(\mu_l)=M, \quad g_{y}(\mu_u)=2M,
    \end{equation}
    and for all $\mu\in[\mu_l,\mu_u]$, we have
    \begin{equation*}
        M\leq g_y(\mu) \leq2M.
    \end{equation*}
    The length of the target interval $[\mu_l,\mu_u]$ satisfies
    \begin{equation}
        \label{eq.bound target interval}
        \mu_u-\mu_l \geq \frac{M\| (\nabla ^2 f(y)+\mu_u I  )^{-1}\nabla f(y)\|}{2}.
    \end{equation}
\end{lemma}
After using the one-to-one correspondence to transfer the target interval to the one with respect to $r$, the complexity of R\&B is bounded as follows. The details can be found in the Appendix~\ref{sec.proof variant 1}.
\begin{lemma}
    \label{lem.bound initial interval}
    Suppose Assumption~\ref{assm.lipschitz} and Assumption~\ref{assm.bounded hessian} hold. At the $k$-th iteration of \Cref{alg.accelerated utr}, the oracle complexity of the R\&B procedure is
    \begin{equation}
        \label{eq.complexity bisection}
        O\left(\log\tfrac{\kappa_H+\mu_+}{\epsilon_g}\right).
    \end{equation}
\end{lemma}
To summarize, in \Cref{alg.local detection}, the number of oracle calls  reduces to
\begin{equation*}
    \max\left\{O\left(\log \tfrac{\|\nabla f(y)\|}{\epsilon_g}\right),
    O\left(\log\tfrac{\kappa_H+\mu_+}{\epsilon_g}\right)
    \right\}.
\end{equation*}
\subsubsection{Oracle complexity of \Cref{alg.accelerated utr}}\label{sec.oracle compl}
We begin with the following property of the iterates $y_k$ if $k\in\mathcal{Z}$, which invoked \Cref{alg.local detection}, cf. \eqref{eq.outer index catagory}.
\begin{lemma}
    \label{lem.upper bound for zero lambda}
    Suppose Assumption~\ref{assm.bounded hessian} hold, for $k\in \mathcal{Z}$, we have
    \begin{equation}
        \label{eq.upper bound for zero lambda}
        \|\nabla f(y_k)\|\leq \frac{2\kappa_H^2}{M}.
    \end{equation}
\end{lemma}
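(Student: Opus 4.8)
The plan is to exploit the fact that $k \in \mathcal{Z}$ means $\lambda_k = 0$, so the \ref{eq.newTR} optimality conditions \eqref{eq.sa} collapse to an \emph{unconstrained} Newton-type system at $y_k$. Concretely, $\lambda_k=0$ together with \eqref{eq.optcond firstorder} gives $(\nabla^2 f(y_k) + \sigma_k I) d_k = -\nabla f(y_k)$, and \eqref{eq.optcond secondorder} guarantees $\nabla^2 f(y_k) + \sigma_k I \succeq 0$; combined with the convexity assumption ($\nabla^2 f(y_k)\succeq 0$) this matrix is positive definite with smallest eigenvalue at least $\sigma_k$. Hence $\|d_k\| = \|(\nabla^2 f(y_k)+\sigma_k I)^{-1}\nabla f(y_k)\| \le \|\nabla f(y_k)\|/\sigma_k$. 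Plugging in the algorithmic choice $\sigma_k = \tfrac{\sqrt{2M}}{2}\|\nabla f(y_k)\|^{1/2}$ yields $\|d_k\| \le \tfrac{2}{\sqrt{2M}}\|\nabla f(y_k)\|^{1/2}$.

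Next I would feed this bound on $\|d_k\|$ into the primal feasibility condition \eqref{eq.optcond primal}, $\|d_k\| \le r_k$, read \emph{backwards}: actually the inequality I want is the other direction. The key observation is that when $\lambda_k = 0$ the ball constraint need not be active, so $\|d_k\| \le r_k$ holds but does not by itself pin down $\|\nabla f(y_k)\|$. Instead, the right route is: since $\lambda_k=0$ we must have $\|d_k\|\le r_k = \tfrac{1}{\sqrt{2M}}\|\nabla f(y_k)\|^{1/2}$, which gives no contradiction — so I should instead combine $\|d_k\| \le \|\nabla f(y_k)\|/\sigma_k$ with Assumption~\ref{assm.bounded hessian}. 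Using $\nabla^2 f(y_k)+\sigma_k I$ has eigenvalues in $[\sigma_k, \kappa_H + \sigma_k]$, we also get the \emph{lower} bound $\|d_k\| \ge \|\nabla f(y_k)\|/(\kappa_H+\sigma_k)$. I then use $\|d_k\| \le r_k$ to obtain
\[
\frac{\|\nabla f(y_k)\|}{\kappa_H + \sigma_k} \le \|d_k\| \le r_k = \frac{1}{\sqrt{2M}}\|\nabla f(y_k)\|^{1/2},
\]
so $\|\nabla f(y_k)\|^{1/2} \le \tfrac{1}{\sqrt{2M}}(\kappa_H + \sigma_k) = \tfrac{1}{\sqrt{2M}}\kappa_H + \tfrac12\|\nabla f(y_k)\|^{1/2}$, hence $\tfrac12\|\nabla f(y_k)\|^{1/2} \le \tfrac{\kappa_H}{\sqrt{2M}}$, i.e. $\|\nabla f(y_k)\|^{1/2} \le \tfrac{2\kappa_H}{\sqrt{2M}} = \tfrac{\sqrt{2}\,\kappa_H}{\sqrt{M}}$. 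Squaring gives $\|\nabla f(y_k)\| \le \tfrac{2\kappa_H^2}{M}$, which is exactly \eqref{eq.upper bound for zero lambda}.

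The one point that needs care — and which I expect to be the only genuine subtlety — is justifying the lower bound $\|d_k\| \ge \|\nabla f(y_k)\|/(\kappa_H + \sigma_k)$ and, relatedly, confirming that $d_k$ satisfies the ball constraint with the \emph{algorithmic} radius $r_k$ rather than some slack version; this follows directly from \eqref{eq.optcond primal} in \Cref{lem.optimal condition} applied with $(\sigma,r)=(\sigma_k,r_k)$, and the lower bound follows from $\|(\nabla^2 f(y_k)+\sigma_k I)d_k\| \le \|\nabla^2 f(y_k)+\sigma_k I\|\,\|d_k\| \le (\kappa_H+\sigma_k)\|d_k\|$ together with $(\nabla^2 f(y_k)+\sigma_k I)d_k = -\nabla f(y_k)$. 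Everything else is elementary algebra, so I would keep the writeup to a handful of lines, invoking Assumption~\ref{assm.bounded hessian}, the convexity of $f$, and \Cref{lem.optimal condition} with $\lambda_k = 0$.
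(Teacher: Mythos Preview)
Your proof is correct and follows essentially the same approach as the paper. Both arguments use the first-order condition $(\nabla^2 f(y_k)+\sigma_k I)d_k=-\nabla f(y_k)$ together with $\|d_k\|\le r_k$ and the Hessian bound $\|\nabla^2 f(y_k)\|\le\kappa_H$; the paper splits via the triangle inequality $\|\nabla f(y_k)\|\le\|\nabla^2 f(y_k)d_k\|+\sigma_k\|d_k\|$ and bounds each term, while you bound the operator norm $\|\nabla^2 f(y_k)+\sigma_k I\|\le\kappa_H+\sigma_k$ in one step, but the resulting inequalities and algebra are identical.
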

\begin{proof}
    Note that when $k\in \mathcal{Z}$, we have $\lambda_k =0$, also from Line~\ref{line.set-params} hence the following hold
    \begin{equation*}
        \left (\nabla^2 f(y_k)+\frac{\sqrt{2M}}{2}\|\nabla f(y_k)\|^{1/2} I\right )d_+ = -\nabla f(y_k), \ \|d_+\| \leq \frac{1}{\sqrt{2M}}\|\nabla f(y_k)\|^{1/2}.
    \end{equation*}
    Therefore, by the triangle inequality, it holds that
    \begin{equation*}
        \|\nabla f(y_k)\| \leq \|\nabla^2 f(y_k) d_+\| +\frac{\sqrt{2M}}{2}\|\nabla f(y_k)\|^{1/2}\|d_+\| \leq \|\nabla^2 f(y_k) d_+\|+\frac{1}{2}\|\nabla f(y_k)\|.
    \end{equation*}
    By Assumption~\ref{assm.bounded hessian}, and rearranging the terms we have
    \begin{equation*}
        \begin{aligned}
            \kappa_H \cdot \frac{1}{\sqrt{2M}}\|\nabla f(y_k)\|^{1/2}  \geq \kappa_H \|d_+\| & \geq \|\nabla^2 f(y_k) d_+\|       \geq \frac{1}{2}\|\nabla f(y_k)\|.
        \end{aligned}
    \end{equation*}
    By rearranging terms, we obtain \eqref{eq.upper bound for zero lambda}.
\end{proof}
Using the results above, we know that the output of {Track 0} is a $\epsilon_g$-stationary-solution, similar to Track 1.
\begin{corollary}
    \label{coro.second et}
    Suppose Assumption~\ref{assm.lipschitz} and \ref{assm.bounded hessian} hold,
    it holds that the output of {Track 0} (at line~\ref{line.if coro 3.1} of \Cref{alg.local detection}) satisfies
    \begin{equation*}
        \|\nabla f(y+d_+)\|\leq \epsilon_g.
    \end{equation*}
\end{corollary}
\begin{proof}
    From \Cref{lem.bound next gradient}, we have
    \begin{equation*}
        \|\nabla f(y+d_+)\| \leq_{(a)} \frac{M}{2}\|d_+\|^2 + \frac{\sqrt{2M}}{2}\|\nabla f(y)\|^{1/2}\|d_+\|   \leq \frac{M}{2}\times \frac{\epsilon_g}{M}+\frac{\sqrt{2M}}{2}\times\frac{\sqrt{2}\kappa_H}{\sqrt{M}}\times\frac{\epsilon_g}{2\kappa_H}  \leq \epsilon_g,
    \end{equation*}
    where $(a)$ is from $\lambda=0$ and Line~\ref{line.set-params}.
\end{proof}

Since $\mu_+ = \frac{\sqrt{2M}}{2}\|\nabla f(y)\|^{1/2}$ and we already established the boundedness for the gradient norm in \Cref{lem.upper bound for zero lambda}, we can derive the oracle complexity of \Cref{alg.accelerated utr}.
\begin{theorem}
    \label{thm.total complexity newton}
    Suppose Assumption~\ref{assm.lipschitz}, Assumption~\ref{assm.solvable} and Assumption~\ref{assm.bounded hessian} hold, it takes at most
    \begin{equation}\label{eq.total complexity newton}
        O\left (\epsilon_f^{-1/3}\log \left(1/\epsilon_g\right)\right)
    \end{equation}
    \ref{eq.newTR} oracles for \Cref{alg.accelerated utr} to find an $(\epsilon_f,\epsilon_g)$-approximate solution as in \Cref{def.approximate solution}.
\end{theorem}
\begin{proof}
    This result is a direct combination of Theorem~\ref{thm.convergence outer loop} with \Cref{lem.bound initial interval} and \Cref{lem.upper bound for zero lambda}.
\end{proof}
\subsection{Local convergence rate of \Cref{alg.accelerated utr}}
\label{sec.local}
In this section, we analyze the local convergence rate of \Cref{alg.accelerated utr}. Compared to other accelerated SOMs, \Cref{alg.accelerated utr} shows a quadratic local convergence in favor of the local detection mechanism. The quadratic local convergence is essential to find high-accuracy approximate solutions. First, we make the following standard assumption for local convergence analysis.
\begin{assumption}
    \label{assm.local strongly convex}
    Suppose problem \eqref{eq.main problem} has a unique solution $x^*$, which satisfies the second-order sufficient optimality condition
    \begin{equation}
        \label{eq.second-order suff cond}
        \nabla f(x^*)=0, \quad \nabla^2 f(x^*)\succeq \nu I
    \end{equation}
    for some $\nu >0$.
\end{assumption}
If the above assumption holds, it is a well-known result that Newton's method converges quadratically if initialized in a small region that contains the optimum.
\begin{lemma}[Theorem 1.2.5,~\cite{nesterov2018lectures}]
    \label{lem.quadratic point}
    Suppose Assumption~\ref{assm.lipschitz} and Assumption~\ref{assm.local strongly convex} hold and $\{z_i\}$ is the sequence generated by Newton's method. Denoting $R_i = \|z_i-x^*\|$, when the initial point $z_0$ is in the region:
    \begin{equation}
        \label{eq.local quadratic region}
        \mathcal{LQ}_p = \{x\in\mathbb{R}^n| \ \|x-x^*\| < \frac{2\nu}{3M}\},
    \end{equation}
    then $z_i \in \mathcal{LQ}_p$ for all $i$ and it converges quadratically to $x^*$:
    \begin{equation}
        \label{eq.quadratic convergence}
        R_{i+1} < \frac{3M}{2\nu}R_i^2<R_i.
    \end{equation}
\end{lemma}
The subscript $p$ is mnemonic for point-wise. Similar to the standard quadratic convergence region discussed above, the gradient norm also exhibits quadratic convergence when the initial point is sufficiently close to $x^*$. To establish this result, we first present the following technical lemma, which will be central to the subsequent analysis.
\begin{lemma}[Corollary 1.2.2,~\cite{nesterov2018lectures}]
    Suppose~Assumption~\ref{assm.lipschitz} and Assumption~\ref{assm.local strongly convex} hold, for any $x$ with $\|x-x^*\| \leq r$, we have
    \begin{equation}
        \label{eq.psd region}
        \nabla^2 f(x^*)-MrI\preceq \nabla^2 f(x) \preceq \nabla^2 f(x^*)+MrI.
    \end{equation}
    Therefore, when $x\in \mathcal{LQ}_p$, we have
    \begin{equation}
        \label{eq.psd xk}
        \nabla^2 f(x) \succ \frac{\nu}{3}I.
    \end{equation}
\end{lemma}
Now we present the region of quadratic convergence $\mathcal{LQ}_g$ for the norm of the gradient, the subscript $g$ is mnemonic for gradient-wise.
\begin{lemma}
    \label{lem.quadratic gradient}
    Suppose Assumption~\ref{assm.lipschitz} and Assumption~\ref{assm.local strongly convex} hold and $\{z_i\}$ is the sequence generated by Newton's method. When the initial point $z_0$ is in the region:
    \begin{equation}
        \label{eq.local quadratic region gradient}
        \mathcal{LQ}_g = \{x\in\mathbb{R}^n| \ \|x-x^*\| < \frac{2\nu}{3M}, \ \|\nabla f(x)\|< \frac{2\nu^2}{9M}\},
    \end{equation}
    then $z_i \in \mathcal{LQ}_g$ for all $i$ and the norm of the gradient $\|\nabla f(z_i)\|$ converges quadratically to $0$:
    \begin{equation}
        \label{eq.quadratic convergence gradient}
        \|\nabla f(z_{i+1})\| < \frac{9M}{2\nu^2}\|\nabla f(z_i)\|^2<\|\nabla f(z_i)\|.
    \end{equation}
\end{lemma}
\begin{proof}
    We prove by induction. Suppose $z_i\in \mathcal{LQ}_g$. From \eqref{eq.first-order exp}, we have
    \begin{equation*}
        \|\nabla f(z_{i+1}) -\nabla f(z_i)-\nabla^2 f(z_i)(z_{i+1}-z_i)\| \leq \frac{M}{2}\|z_{i+1}-z_i\|^2.
    \end{equation*}
    Noting that $z_{i+1} = z_i-\nabla^2 f(z_i)^{-1}\nabla f(z_i)$, we have
    \begin{equation*}
        \|\nabla f(z_{i+1})\| \leq \frac{M}{2}\|\nabla^2 f(z_i)^{-1} \nabla f(z_i)\|^2  \leq \frac{M}{2}\|\nabla^2 f(z_i)^{-1}\|^2\| \nabla f(z_i)\|^2 <_{(a)} \frac{9M}{2\nu^2}\|\nabla f(z_i)\|^2 <_{(b)} \|\nabla f(z_i)\|.
    \end{equation*}
    $(a)$ is from $R_i< \frac{2\nu}{3M}$ and \eqref{eq.psd xk}. $(b)$ is from the fact $z_i\in \mathcal{LQ}_g$. $R_{i+1} \leq \frac{2\nu}{3M}$ follows from \Cref{lem.quadratic point}. Therefore, $z_{i+1}\in \mathcal{LQ}_g$.
\end{proof}
As a result, we know how many iterations are needed to find a point with $\|\nabla f(z_i)\|\leq \epsilon_g$ if $z_0 \in \mathcal{LQ}_g$ and $\{z_i\}$ is generated by Newton's method initialized at $z_0$.
\begin{corollary}
    \label{lem.number of iterations newton}
    Suppose Assumption~\ref{assm.lipschitz} and Assumption~\ref{assm.local strongly convex} hold. Let $0<\epsilon_g< \frac{1}{C(M,\nu)}$, $z_0\in\mathcal{LQ}_g$ and $\{z_i\}$ generated by Newton's method. Then when
    \begin{equation}
        \label{eq.number of iterations newton}
        i\geq \left\lceil \log\left( \frac{\ln\left(\tfrac{1}{C(M,\nu)\epsilon_g}\right)}{\ln\left(\tfrac{1}{\eta_0}\right)} \right) \right\rceil \quad \text{with} \quad C(M,\nu) = \frac{9M}{2\nu^2}, \quad \eta_0 = \|\nabla f(z_0)\| C(M,\nu)<1,
    \end{equation}
    we have $\|\nabla f(z_i)\|\leq \epsilon_g$.
\end{corollary}
\begin{proof}
    From \eqref{eq.quadratic convergence gradient} we have
    \begin{equation*}
        \|\nabla f(z_i)\| \leq C(M,\nu)\|\nabla f(z_{i-1})\|^2 \leq C(M,\nu)^{2^{i-1}} \|\nabla f(z_0)\|^{2^i} \leq \frac{(C(M,\nu)\|\nabla f(z_0)\|)^{2^i}}{C(M,\nu)}.
    \end{equation*}
    Therefore, when $i$ hits the threshold defined in \eqref{eq.number of iterations newton}, we have $\|\nabla f(z_i)\|\leq \epsilon_g$.
\end{proof}
After all these preparations, we show that \Cref{alg.local detection} can guarantee that we can find an $(\epsilon_f,\epsilon_g)$-approximate solution with a local quadratic rate. The proof consists of the following three steps:
\begin{enumerate}[label=(\roman*)]
    \item After finite number of iterations of \Cref{alg.accelerated utr}, $y_k\in\mathcal{LQ}_g$;
    \item Whenever $y_k\in\mathcal{LQ}_g$, $\lambda_k=0$ in \Cref{alg.accelerated utr};
    \item Once $y_k\in \mathcal{LQ}_g$, the Local Diving procedure in \Cref{alg.local detection} converges quadratically with ET=True.
\end{enumerate}
First, we show the first step, i.e., $y_k$ will finally enter the region $\mathcal{LQ}_g$.
\begin{lemma}\label{lem.enter region}
    Suppose Assumptions~\ref{assm.lipschitz}, \ref{assm.bounded hessian},
    and \ref{assm.local strongly convex} hold. Then, whenever the $k$-th iterate of
    \Cref{alg.accelerated utr} is generated, if
    \[
        k\geq
        \left\lceil
        36\,\frac{\kappa_H M \|x_0-x^*\|}{\nu^2}
        \right\rceil,
    \]
    we have $y_k\in\mathcal{LQ}_g$.
\end{lemma}

The proof of the above lemma can be found in \Cref{sec.proof variant 1}.
Next, we prove the second step: in \Cref{alg.accelerated utr}, when $y_k\in\mathcal{LQ}_g$, we must have $\lambda_k=0$.
\begin{lemma}
    \label{lem.detection of local quadratic}
    Suppose Assumption~\ref{assm.lipschitz} and Assumption~\ref{assm.local strongly convex} hold. In \Cref{alg.accelerated utr}, if $y_k\in\mathcal{LQ}_g$, then $\lambda_k=0$.
\end{lemma}
\begin{proof}
    From \eqref{eq.optcond coml slack}, to prove $\lambda_k=0$, it suffices to prove that the step $d_k$ lies in the trust region. First from Line~\ref{line.set-params} in \Cref{alg.accelerated utr}
    \begin{equation*}
        \begin{aligned}
            \|(\nabla^2 f(y_k)+\sigma_k I)^{-1}\nabla f(y_k)\| & = \left\|\left (\nabla^2 f(y_k)+\frac{\sqrt{2M}}{2} \|\nabla f(y_k)\|^{1/2} I\right)^{-1}\nabla f(y_k)\right\|         \\
                                                               & \leq \left\|\left (\nabla^2 f(y_k)+\frac{\sqrt{2M}}{2} \|\nabla f(y_k)\|^{1/2} I\right)^{-1}\right\| \|\nabla f(y_k)\|
        \end{aligned}
    \end{equation*}
    Note $\nabla^2 f(y_k)\succ \frac{\nu}{3}$ as analyzed in \eqref{eq.psd xk}.
    \begin{equation*}
        \begin{aligned}
            \|(\nabla^2 f(y_k)+\sigma_k I)^{-1}\nabla f(y_k)\| & \leq \frac{\|\nabla f(y_k)\|}{\frac{\nu}{3}+\frac{\sqrt{2M}}{2} \|\nabla f(y_k)\|^{1/2}} \\&<_{(a)} \frac{\|\nabla f(y_k)\|}{\frac{\sqrt{2M}}{2}\|\nabla f(y_k)\|^{1/2}+\frac{\sqrt{2M}}{2} \|\nabla f(y_k)\|^{1/2}} =\frac{\|\nabla f(y_k)\|^{1/2}}{\sqrt{2M}} =r_k.
        \end{aligned}
    \end{equation*}
    $(a)$ is because of the definition of $\mathcal{LQ}_g$ in \eqref{eq.local quadratic region gradient}. Therefore, we conclude that the TR constraint is inactive and $\lambda_k=0$.
\end{proof}
Last, we show the third step: when the tolerance $\epsilon_g$ is small enough, \Cref{alg.accelerated utr} will be early terminated, i.e., we have ET$=$True when $y_k$ enters $\mathcal{LQ}_g$.
\begin{theorem}
    \label{thm.local quadratic convergence}
    Suppose Assumption~\ref{assm.lipschitz} and Assumption~\ref{assm.local strongly convex} hold, then there exists $\epsilon^* >0$ such that when
    \begin{equation}
        \label{eq.regularity epsilon}
        0<\epsilon_g< \min\{\epsilon^*, \frac{1}{C(M,\nu)}\},
    \end{equation}
    the $\epsilon_g$-stationary-solutions will be found in $O\left(\log\log\left(1/\epsilon_g\right)\right)$ iterations when $y_k$ enters $\mathcal{LQ}_g$, and ET$=$True in \Cref{alg.accelerated utr}.
\end{theorem}
\begin{proof}
    We first consider the case that ET$=$True at line~\ref{line.et 2} of \Cref{alg.local detection}, then $y_k+d_k$ is already an $\epsilon_g$-stationary-solution~(see \Cref{coro.second et}) and there is nothing to prove.

    Next, we consider the case \Cref{alg.local detection} enters local diving. We know from \eqref{eq.psd xk} that when $y_k\in\mathcal{LQ}_g$, $\|\nabla f(y_k)\|\leq \frac{1}{C(M,\nu)}$, then from Lemma~\ref{lem.loglog vs log}~(see Appendix~\ref{sec.proof variant 1}) we know that local diving will find $\epsilon_g$-stationary-solutions within the iteration number defined in \eqref{eq.number of iterations newton}, which is strictly less than the maximal number of iteration we defined at line~\ref{line.track 1 number of iter}, i.e., $\left \lceil\log\frac{\|\nabla f(y)\|}{\epsilon_g} \right\rceil$.
\end{proof}

\section{Variant~II: Pushing Global Efficiency to the Limit}\label{sec.second variant}
We now turn to another accelerated TR method that incorporates the acceleration framework of \citet{monteiro2013accelerated}. This method achieves a near-optimal global oracle complexity of $\tilde{O}(\epsilon_f^{-2/7})$ for finding $(\epsilon_f,\epsilon_g)$-approximate solutions. We refer to this approach as the Accelerated Trust-Region Extragradient Method (\Cref{alg.ms accelerated utr}).
\subsection{Algorithm design}
We begin by outlining the acceleration framework in \Cref{alg.ms accelerated utr}. The main technical ingredient is to integrate the \ref{eq.newTR} oracle into a modified version of the framework proposed by \citet{monteiro2013accelerated}, providing another eligible oracle choice besides the cubic regularization oracle in the previous literature \cite{carmon2022optimal}.

At the core of \Cref{alg.ms accelerated utr} lies an implicit search procedure (\Cref{alg.bisection ms}, we still call it R\&B for short) along the curve:
\begin{gather} \label{eq.update yk} y_k(\sigma) = \frac{A_k}{A_k+a_k(\sigma)}x_k+\frac{a_k(\sigma)}{A_k+a_k(\sigma)}v_k\\ \label{eq.update ak} a_k(\sigma) =\frac{1+\sqrt{1+4A_k\sigma}}{2\sigma}.
\end{gather}
In the whole process, R\&B~(\Cref{alg.bisection ms}) determines the primal regularization $\sigma_k$, and thus $a_k(\sigma_k)$ and the extrapolation point $y_k(\sigma_k)$, at which \ref{eq.newTR} is called.
\begin{algorithm}[ht]
    \caption{Accelerated Trust-Region Extragradient Method~(\atrms{})}\label{alg.ms accelerated utr}
    \begin{algorithmic}[1]
        \STATE{\textbf{input:} $x_0=v_0\in \mathbb{R}^n$, \ $A_0 =0$, \ R\&B search threshold $\theta>1$, \ $0<\eta<1$, damping parameter $\gamma \leq \frac{1}{\theta}$, stationarity tolerance $\epsilon_g>0$}
        \FOR{$k=0, 1, 2, \ldots$}
        \STATE{$(x_{k+1},\sigma_k,\text{ET}) = \operatorname{R\&B}(x_k,v_k,A_k,\eta,\theta,\epsilon_g)$}
        \IF{$\text{ET}$}
        \STATE \textbf{terminate and output $x_{k+1}$
            \label{line.early terminate ms}
        }
        \ENDIF
        \STATE{$a_k = a_k(\sigma_k)$}
        \STATE{
            $v_{k+1} = v_k-\gamma a_k \nabla f(x_{k+1})$}
        \STATE{
            $A_{k+1} = A_k+a_k$
            \label{line.alg2-update-A}
        }
        \ENDFOR
    \end{algorithmic}
\end{algorithm}
\begin{subroutine}[!ht]
    \caption{Ratio Bracketing and Bisection~(R\&B) for \Cref{alg.ms accelerated utr}}\label{alg.bisection ms}
    \begin{algorithmic}[1]
        \STATE{\textbf{input:} $x,v\in \mathbb{R}^n$, $A,0<\eta<1,\theta>1,\epsilon_g>0$;}
        \STATE{\texttt{\# Bracket points}}
        \STATE{Set $\sigma_- = \sqrt{\frac{2M\epsilon_g}{1+2\theta}}, \ \sigma_+=\sqrt{\frac{MG_0}{\eta}}$, $(d_-,\lambda_-) = \trp(y(\sigma_-),\sigma_-,\frac{1}{M}\sigma_-)$
            \label{line.def G0}
        }
        \IF{$\lambda_- \leq (\theta-1)\sigma_-$}\label{line.early termination criterion}
        \STATE{\textbf{Output} $y(\sigma_-)+d_-,\sigma_-,\text{ET=True}$}
        \label{line.early termination ms}
        \ELSE
        \STATE{\texttt{\# Perform bisection over $\sigma\in [\sigma_-,\sigma_+]$}}
        \WHILE{i) $\lambda = 0$ and $\|d\| < \frac{\eta}{M}\sigma$, or ii) $\lambda > (\theta-1)\sigma$}\label{line.goal of bisection ms}
        \STATE{$\sigma = \frac{\sigma_-+\sigma_+}{2}$}
        \STATE{$(d,\lambda) = \trp(y(\sigma),\sigma,\frac{1}{M}\sigma)$\label{line.call-utr-bisec}}
        \IF{i) holds}
        \STATE{$\sigma_+ =\sigma$}
        \ELSIF{ii) holds}
        \STATE{$\sigma_- =\sigma$}
        \ENDIF
        \ENDWHILE
        \STATE{\textbf{output} $y(\sigma)+d,\sigma,\text{ET=False}$}
        \ENDIF
    \end{algorithmic}
\end{subroutine}
In R\&B~(\Cref{alg.bisection ms}), the search task initializes the bracketing points at line~\ref{line.def G0}, where
$$G_0 = \max\left\{\|\nabla f(x)\| \mid \|x-x^*\|\leq \left(\frac{4}{\sqrt{3\gamma}}+1\right)D_0\right\}, \quad D_0 = \|x_0-x^*\|.$$
At line~\ref{line.goal of bisection ms}, we proceed bisection until $\sigma_k$ has been located such that
\begin{equation}
    \label{eq.relation lambda sigma}
    0 \leq \lambda_k \leq (\theta - 1)\sigma_k, \quad \|d_k\| \geq \frac{\eta}{M} \sigma_k,
\end{equation}
where $\theta$ and $\eta$ are prescribed thresholds for the regularization term and the step size, respectively. They will work together with the damping parameter $\gamma$ in \Cref{alg.ms accelerated utr} to guarantee acceleration for \ref{eq.newTR} oracle. Inequalities \eqref{eq.relation lambda sigma} serves a similar purpose as \eqref{lem.ratio pass nonzero} to \Cref{alg.accelerated utr}, which we will further clarify in \Cref{lem.reduce line search}.

Similar to \Cref{alg.accelerated utr}, there are two paths to terminate \Cref{alg.ms accelerated utr}. The first path occurs when the flag $\textrm{ET} = \text{True}$, indicating early termination (see line~\ref{line.early termination ms}). This happens when an $\epsilon_g$-stationary-solution is found in R\&B~(\Cref{alg.bisection ms}).
The second path arises if \Cref{alg.ms accelerated utr} is not early terminated. In this case, the step $d_k$ satisfies \eqref{eq.relation lambda sigma} for each $k$, and consequently, the estimating sequence guarantees that \Cref{alg.ms accelerated utr} outputs an $\epsilon_f$-function-value solution.
\subsection{Global convergence in function value}
The proof sketch is similar to the one in the previous section: we first establish \textit{iteration complexity} then move on to \textit{oracle complexity}.
\subsubsection{Iteration complexity of \Cref{alg.ms accelerated utr}}
We first show that \Cref{alg.ms accelerated utr} can terminate early in R\&B~(\Cref{alg.bisection ms}).
\begin{lemma}
    \label{lem.early termination ms}
    Suppose Assumption~\ref{assm.lipschitz} holds and \Cref{alg.bisection ms} outputs ET$=$True at line~\ref{line.early terminate ms}, we have
    \begin{equation*}
        \|\nabla f(x_{k+1}) \|\leq \epsilon_g.
    \end{equation*}
\end{lemma}
\begin{proof}
    By definition, if ET$=$True, then $x_{k+1} = y(\sigma_-)+d_-$. Therefore, we have
    by \Cref{lem.lipschitz}, we have
    \begin{equation*}
        \|\nabla f(y(\sigma_-)+d_-)-\nabla f(y(\sigma_-))-\nabla^2 f(y(\sigma_-))d_-\|\leq \frac{M}{2}\|d_-\|^2.
    \end{equation*}
    By triangle inequality we have
    \begin{equation*}
        \begin{aligned}
            \|\nabla f(y(\sigma_-)+d_-)\| & \leq \frac{M}{2}\|d_-\|^2+\| \nabla f(y(\sigma_-))+\nabla^2 f(y(\sigma_-))d_-\| \\
                                          & =_{(a)} \frac{M}{2}\|d_-\|^2+(\sigma_-+\lambda_-) \|d_-\|                       \\
                                          & \leq_{(b)} \frac{1+2\theta}{2M}\sigma_-^2  \leq_{(c)} \epsilon_g,
        \end{aligned}
    \end{equation*}
    where $(a)$ is from \eqref{eq.optcond firstorder}, $(b)$ is from Line~\ref{line.def G0} and thus $\|d_-\| \leq \frac{1}{M}\sigma_-$, $\sigma_-+\lambda_-\leq \theta\sigma_-$, and $(c)$ is from Line~\ref{line.def G0}.
\end{proof}
Since we focus on the worst-case global oracle complexity, we assume that early termination does not occur; that is, the ``If'' route at line~\ref{line.early termination criterion} of R\&B~(\Cref{alg.bisection ms}) is never triggered. Similar to previous analyses, we will show that \Cref{alg.ms accelerated utr} takes $O(\epsilon_f^{-2/7})$ to find an $\epsilon_f$-function-value approximate solution satisfying \Cref{def.approximate solution}.

First, we assume the bisection procedure is valid in the sense that \eqref{eq.relation lambda sigma} holds for each iteration~(which will be verified in~\Cref{sec.validty of bisection}).
As a result, the following recursive relation holds.
\begin{lemma}\label{lem.ms estimate}
    Suppose Assumption~\ref{assm.lipschitz}, Assumption~\ref{assm.solvable} hold, and the output of \textnormal{R\&B}~(\Cref{alg.bisection ms}) satisfies \eqref{eq.relation lambda sigma}, i.e.,
    \begin{equation*}
        0\leq\lambda_k\leq (\theta-1)\sigma_k, \quad \|d_k\|\geq \frac{\eta}{M}\sigma_k,
    \end{equation*}
    then in the $k$-th iteration of \Cref{alg.ms accelerated utr}, the following holds:
    \begin{equation}
        \label{eq.helper bounded x}
        \frac{1}{2}\|v_{k+1}-x^* \|^2 +\gamma A_{k+1}\left(f(x_{k+1})-f^*\right)+ \frac{3\gamma A_{k+1}\sigma_k}{8}\|d_k\|^2 \leq \frac{1}{2}\|v_k-x^*\|^2 +\gamma A_k \left (f(x_k)-f^*\right).
    \end{equation}
    Further, if the output of \textnormal{R\&B}~(\Cref{alg.bisection ms}) satisfies \eqref{eq.relation lambda sigma} for all iteration $i$ with $0\leq i\leq k$, then
    \begin{equation}
        \label{eq.ms estimate advanced}
        \gamma A_{k+1}\left (f(x_{k+1})-f^* \right) + \frac{1}{2}\|v_{k+1}-x^*\|^2 +B_{k+1} \leq \frac{1}{2}\|v_0-x^*\|^2,
    \end{equation}
    where $B_{k+1} = \frac{3\gamma M}{8}\sum_{i=0}^{k}A_{i+1}\|d_i\|^3.$
\end{lemma}
\begin{proof}
    Note the way we update $v_{k+1}$ in \Cref{alg.ms accelerated utr}.
    \begin{align*}
        \|v_{k+1} - x^*\|^2
         & = \|v_k - x^* - \gamma a_k \nabla f(x_{k+1})\|^2                                                                                                             \\
         & = \|v_k - x^*\|^2 + \gamma^2 a_k^2 \|\nabla f(x_{k+1})\|^2
        - 2\gamma a_k \langle \nabla f(x_{k+1}), v_k - x^* \rangle                                                                                                      \\
         & =_{(a)} \|v_k - x^*\|^2 + \gamma^2 a_k^2 \|\nabla f(x_{k+1})\|^2  - 2\gamma \langle \nabla f(x_{k+1}), (A_k + a_k) y_k(\sigma_k) - A_k x_k - a_k x^* \rangle \\
         & = \|v_k - x^*\|^2 + \gamma^2 a_k^2 \|\nabla f(x_{k+1})\|^2                                                                                                   \\
         & \quad - 2\gamma \langle \nabla f(x_{k+1}),
        (A_k + a_k)(y_k(\sigma_k) - x_{k+1}) + A_k(x_{k+1} - x_k)
        + a_k(x_{k+1} - x^*) \rangle                                                                                                                                    \\
         & \leq_{(b)} \|v_k - x^*\|^2 + \gamma^2 a_k^2 \|\nabla f(x_{k+1})\|^2 - 2\gamma (A_k + a_k) \langle \nabla f(x_{k+1}), y_k(\sigma_k) - x_{k+1} \rangle         \\
         & \quad - 2\gamma A_k \left(f(x_{k+1}) - f(x_k)\right)
        - 2\gamma a_k \left(f(x_{k+1}) - f^*\right).
    \end{align*}

    $(a)$ comes from \eqref{eq.update yk}. $(b)$ is from the convexity of $f$. Using $A_{k+1}=A_k+a_k$ as in \Cref{alg.ms accelerated utr} and rearranging terms gives
    \begin{align*}
             & \|v_{k+1}-x^*\|^2 +2\gamma A_{k+1} \left (f(x_{k+1})-f^*\right)                                                                                                                                                                          \\
        \leq & \|v_k-x^*\|^2+2\gamma A_k \left (f(x_k)-f^*\right )+\gamma^2 a_k^2 \|\nabla f(x_{k+1})\|^2 -2\gamma \left (A_k+a_k\right)\langle \nabla f(x_{k+1}), y_k(\sigma_k)-x_{k+1} \rangle                                                        \\
        \leq & _{(a)} \|v_k-x^*\|^2+2\gamma A_k \left (f(x_k)-f^*\right )+\gamma^2 a_k^2 \|\nabla f(x_{k+1})\|^2 -\gamma \left (A_k+a_k\right)\left (\frac{\|\nabla f(x_{k+1})\|^2}{\sigma_k+\lambda_k}+\frac{3(\sigma_k+\lambda_k)}{4}\|d_k\|^2\right) \\
        \leq & _{(b)} \|v_k-x^*\|^2+2\gamma A_k \left (f(x_k)-f^*\right )+\left(\gamma^2 a_k^2-\frac{\gamma(A_k+a_k)}{\theta \sigma_k}\right)\|\nabla f(x_{k+1})\|^2 -\frac{3\gamma(A_k+a_k)(\sigma_k+\lambda_k)}{4}\|d_k\|^2.
    \end{align*}
    $(a)$ is from whenever we call the oracle at Line~\ref{line.def G0} and \ref{line.call-utr-bisec}, we have $\sigma \geq M\|d\|$, hence we can apply \eqref{eq.inner product g d}. $(b)$ is from \eqref{eq.relation lambda sigma}. Note that $\gamma \leq \frac{1}{\theta}$ and \eqref{eq.update ak} hold. We have
    \begin{equation*}
        \gamma^2 a_k^2-\frac{\gamma(A_k+a_k)}{\theta \sigma_k} = \gamma\left(\gamma a_k^2-\frac{A_k+a_k}{\theta \sigma_k}\right)\leq \frac{\gamma}{\theta}\left(a_k^2-\frac{A_k+a_k}{\sigma_k}\right)=0,
    \end{equation*}
    which gives \eqref{eq.helper bounded x}.
    Also, because $\sigma_k \geq M\|d_k\|$ as stated,
    \begin{equation*}
        \frac{1}{2}\|v_{k+1}-x^* \|^2 +\gamma A_{k+1}\left (f(x_{k+1}-f^*)\right)+ \frac{3\gamma M A_{k+1}}{8}\|d_k\|^3 \leq \frac{1}{2}\|v_k-x^*\|^2 +\gamma A_k \left (f(x_k)-f^*\right).
    \end{equation*}
    Summing the above inequality over $i = 0, 1, \dots, k$, we obtain
    \begin{equation*}
        \sum_{i=0}^k \left( \frac{1}{2}\|v_{i+1}-x^* \|^2 +\gamma A_{i+1}\left(f(x_{i+1})-f^*\right) \right) + \frac{3\gamma M}{8} \sum_{i=0}^k A_{i+1}\|d_i\|^3 \leq\  \sum_{i=0}^k \left( \frac{1}{2}\|v_i-x^*\|^2 +\gamma A_i \left(f(x_i)-f^*\right) \right).
    \end{equation*}
    By canceling the common terms on both sides (telescoping sum), this simplifies to
    \begin{equation*}
        \frac{1}{2}\|v_{k+1}-x^* \|^2 +\gamma A_{k+1}\left(f(x_{k+1})-f^*\right) + \frac{3\gamma M}{8}\sum_{i=0}^{k}A_{i+1}\|d_i\|^3 \leq \frac{1}{2}\|v_0-x^*\|^2 +\gamma A_0 \left(f(x_0)-f^*\right).
    \end{equation*}
    Since $A_0 = 0$ by initialization, the second term on the right-hand side vanishes. Letting $B_{k+1} = \frac{3\gamma M}{8}\sum_{i=0}^{k}A_{i+1}\|d_i\|^3$, we obtain the desired result \eqref{eq.ms estimate advanced}.
\end{proof}
From \eqref{eq.ms estimate advanced}, it is clear that the oracle complexity of \Cref{alg.ms accelerated utr} reduces to analyzing the growth rate of $A_k$, which is addressed in \citet[Lemma 4.2]{monteiro2013accelerated} and \citet[Lemma 4.3.5]{nesterov2018lectures}. We provide the lower bound of $A_k$ in the following lemma, whose proof is deferred to the \Cref{sec.appendix 2}.
\begin{lemma}
    \label{lem.growth of Ak}
    For any $k\geq 1$, we have
    \begin{equation}
        \label{eq.growth of Ak}
        A_k \geq \left(\frac{\eta}{4}\left (\frac{3\gamma}{4M^2D_0^2}\right)^{1/3}\right)^{3 / 2}\left(\frac{2 k+1}{3}\right)^{7/2} = \Omega\left(\frac{k^{7/2}}{MD_0}\right).
    \end{equation}
\end{lemma}
Hence, as a direct consequence of \eqref{eq.ms estimate advanced} and \eqref{eq.growth of Ak}, the complexity of \Cref{alg.ms accelerated utr} can be summarized as follows.
\begin{theorem}
    [{\citet[Theorem 4.3.2]{nesterov2018lectures}}]\label{thm.outer loop complexity}
    Suppose Assumption~\ref{assm.lipschitz} and Assumption~\ref{assm.solvable}
    hold, and suppose that \eqref{eq.relation lambda sigma} is satisfied at every non-terminated
    iteration. Then it takes \Cref{alg.ms accelerated utr} $O\left(\left(\frac{MD_0^3}{\epsilon_f}\right)^{2/7}\right)$ iterations to find $\epsilon_f$-function-value solutions as in \Cref{def.approximate solution}.
\end{theorem}
\subsubsection{Number of oracle calls of \Cref{alg.bisection ms}}\label{sec.validty of bisection}
Next, we elaborate on how R\&B~(\Cref{alg.bisection ms}) safeguards \eqref{eq.relation lambda sigma}, and then provide the estimate of the number of \ref{eq.newTR} oracles needed during this procedure. We omit the subscripts in the analysis for the bisection for simplicity. We now define the auxiliary function in the analysis, which is bivariate in $y$ and $\sigma$.
\begin{equation}
    \label{eq.linesearch quantity}
    \psi(\sigma,y) := \frac{1}{\sigma}\left \| \left(\nabla^2 f(y)+\sigma I \right)^{-1}\nabla f(y)\right \|, \ \sigma>0, \ y\in\mathbb{R}^n.
\end{equation}
Some basic analyses on local perturbation in $\sigma$ and $y$ are deferred to \Cref{sec.appendix 2}~(see \Cref{lem.psi property wrt sigma} and \Cref{lem.psi property wrt y}).

The below \Cref{lem.reduce line search} means the analysis of auxiliary function $\psi(\sigma,y(\sigma))$ can be simplified by focusing solely on $\sigma$, reducing the search procedure into a one-dimensional problem. The proof of \Cref{lem.reduce line search} can be found in \Cref{sec.app-c}.
\begin{lemma}
    \label{lem.reduce line search}
    At line \ref{line.goal of bisection ms} of \textnormal{R\&B}~(\Cref{alg.bisection ms}), if case i) occurs, i.e.,
    \begin{equation}
        \label{eq.bracket small lambda}
        \lambda = 0, \quad \|d\|<\frac{\eta}{M}\sigma,
    \end{equation}
    then $\psi(\sigma,y(\sigma))<\frac{\eta}{M}$.  Otherwise, if case ii) occurs, i.e.,
    \begin{equation}
        \label{eq.bracket big lambda}
        \lambda > (\theta-1)\sigma,
    \end{equation}
    then $\psi(\sigma,y(\sigma))>\frac{1}{M}$. As a result, if $\sigma$ satisfies
    \begin{equation}
        \label{eq.reduce line search interval}
        \frac{\eta}{M}\leq \psi(\sigma,y(\sigma)) \leq \frac{1}{M},
    \end{equation}
    then \eqref{eq.relation lambda sigma} holds.
\end{lemma}

Now we can validate the choice of $\sigma_-,\sigma_+$ as qualified bracketing points.
\begin{lemma}
    \label{lem.bracketing points}
    Suppose
    \begin{equation}\label{eq.bounded gradient y}
        \|\nabla f(y(\sigma))\|\leq G_0, \quad \forall \sigma>0,
    \end{equation}
    and we let
    \begin{equation}
        \label{eq.bracket points}
        \sigma_- = \sqrt{\frac{2M\epsilon_g}{1+2\theta}}, \quad \sigma_+ = \sqrt{\frac{MG_0}{\eta}}.
    \end{equation}
    Then if \textnormal{R\&B}~(\Cref{alg.bisection ms}) does not early terminate, we have
    \begin{equation*}
        \psi\left (\sigma_-,y(\sigma_-)\right) >\frac{1}{M}, \quad \psi\left (\sigma_+,y(\sigma_+)\right) <\frac{\eta}{M}.
    \end{equation*}
\end{lemma}
\begin{proof}
    First, we show that $\psi(\sigma_+,y(\sigma_+))<\frac{\eta}{M}$,
    \begin{equation*}
        \begin{aligned}
            \psi(\sigma_+,y(\sigma_+)) & = \frac{1}{\sigma_+} \|(\nabla^2 f(y(\sigma_+))+\sigma_+ I)^{-1}\nabla f(y(\sigma_+))\|            \\
                                       & \leq \frac{1}{\sigma_+^2}\|\nabla f(y(\sigma_+))\| \leq \frac{1}{\sigma_+^2} G_0 = \frac{\eta}{M}.
        \end{aligned}
    \end{equation*}
    The second line is from $\nabla^2 f(y(\sigma_+))\succeq 0$, \eqref{eq.bounded gradient y} and \eqref{eq.bracket points}. For the other statement, from \Cref{lem.early termination ms}, we can conclude that if the \Cref{alg.bisection ms} does not early terminate, we have $\lambda_- > (\theta-1)\sigma_-$, and from \Cref{lem.reduce line search} $\psi(\sigma_-,y(\sigma_-))>\frac{1}{M}.$
\end{proof}
Under the bounded gradient assumption \eqref{eq.bounded gradient y}, during the bisection, we have
\begin{gather}
    \label{eq.bisection condition}
    \psi_+:=\psi(\sigma_+,y(\sigma_+))<\frac{\eta}{M}, \quad \psi_-:=\psi(\sigma_-,y(\sigma_-))>\frac{1}{M},\\
    \label{eq.bracket points condition} \sigma_- \geq \sqrt{\frac{2M\epsilon_g}{1+2\theta}}, \quad \sigma_+ \leq \sqrt{\frac{MG_0}{\eta}}.
\end{gather}
These relations ensure that valid bracketing points are maintained throughout the search. Leveraging the property of the curve $\psi(\sigma,y(\sigma))$, we can then derive the complexity of R\&B~(\Cref{alg.bisection ms}) as in \Cref{lem.complexity bisection}. A formal statement and proof of this result are provided in \Cref{sec.appendix 2} (\Cref{coro.bisection bound}).
\begin{lemma}
    \label{lem.complexity bisection}
    Assume that Assumptions~\ref{assm.lipschitz}, Assumption~\ref{assm.solvable} and conditions \eqref{eq.bounded gradient y} hold. Also, assume \begin{equation}\label{eq.bounded assm point main text}
        \|x-x^*\|\leq M_0, \ \|v-x^*\|\leq M_0,
    \end{equation}
    for some $M_0>0$, then the number of \ref{eq.newTR} oracle calls during the bisection is $O\left(\log \left(1/\epsilon_g\right)\right)$.
\end{lemma}
\subsubsection{Oracle complexity of \Cref{alg.ms accelerated utr}}
To derive the final oracle complexity result of \Cref{alg.ms accelerated utr}, one final step remains: we must get rid of the boundedness assumption used in \eqref{eq.bounded gradient y} and \eqref{eq.bounded assm point main text}. We establish this in \Cref{lem.bounded iterate}, the proof of which can be found in \Cref{sec.app-c}.
\begin{lemma}
    \label{lem.bounded iterate}
    Suppose Assumption~\ref{assm.lipschitz} and Assumption~\ref{assm.solvable} hold. For every $k\geq 0$,
    \begin{equation}
        \label{eq.bounded iterate}
        \|x_k- x^*\| \leq \left(\frac{4}{\sqrt{3\gamma}}+1\right)D_0, \quad \|v_k-x^*\|\leq D_0.
    \end{equation}
    As a consequence,
    \begin{gather}
        \label{eq.bounded iterate y}
        \|y_k(\sigma)-x^*\| \leq \left(\frac{4}{\sqrt{3\gamma}}+1\right)D_0, \quad \forall \sigma>0,\\
        \label{eq.ub yk sigma}
        \|\nabla f(y_k(\sigma))\| \leq G_0:= \left(\frac{4}{\sqrt{3\gamma}}+1\right)\|\nabla ^2 f^*\|D_0+\frac{M}{2}\left(\frac{4}{\sqrt{3\gamma}}+1\right)^2 D_0^2.
    \end{gather}
\end{lemma}
By this lemma, we know \eqref{eq.bounded gradient y} holds, and \eqref{eq.bounded assm point main text} holds uniformly for all $k\geq0$ with $M_0 =\left(\frac{4}{\sqrt{3\gamma}}+1\right)D_0 $. Now we finally arrive at the final theorem of \Cref{alg.ms accelerated utr}, as a consequence of \Cref{thm.outer loop complexity}, \Cref{lem.complexity bisection}, and \Cref{lem.bounded iterate}.
\begin{theorem}
    \label{thm.final complexity of msatr}
    Suppose Assumption~\ref{assm.lipschitz} and Assumption~\ref{assm.solvable} hold, it takes at most
    \begin{equation}
        \label{eq.final complexity of msatr}
        O\left(\epsilon_f^{-2/7}\log\left(1/\epsilon_g\right)\right)
    \end{equation}
    \ref{eq.newTR} oracles for \Cref{alg.ms accelerated utr} to find an $(\epsilon_f,\epsilon_g)$-approximate solution as in \Cref{def.approximate solution}.
\end{theorem}
While the proposed algorithm attains a near-optimal global oracle complexity rate, it fails to balance the global guarantees and the local efficiency. The reason behind is that the primal regularizer $\sigma$ must be fixed before the extrapolation point is determined in the extragradient framework, which prevents effective exploitation of local geometric structures and limits faster local convergence.
\section{Numerical Experiments}\label{sec.experiments}
In this section, we present the numerical experiments to validate the global and local behavior of the proposed methods\footnote{Our implementation is available at \url{https://github.com/bzhangcw/UTR.jl/}.}. All experiments are conducted on a single machine with a 14-core Apple M4 Pro CPU and 48GB LPDDR5 RAM.
We conduct experiments on the regularized logistic regression problem, which is defined as follows:
\begin{equation}
    \label{eq.logistic regression}
    f(x) = \frac{1}{N} \sum_{i=1}^N \log (1 + \exp (-b_i a_i^T x)) + \frac{\gamma}{2}\|x\|^2,
\end{equation}
where $a_i \in \mathbb{R}^n$ and $b_i \in \{-1, 1\}$, $\gamma = 10^{-4}$ is the regularization parameter.
As mentioned, this problem is a notorious example where many accelerated SOMs are not as competitive as the classical Newton-type methods \cite{carmon2022optimal,chen2022accelerating,zhang2012on} numerically.
Besides, a well-known estimate for the Lipschitz constant $\widehat M$ of $\nabla^2 f$ can be specified as follows:
$$\widehat M = \left\|\frac{1}{N} \sum_{i=1}^N a_i a_i^{\top}\right\| \max _{i \in[N]}\left\|a_i\right\|.$$
Although the estimate is conservative~\cite{song2021unified}, we adopt it to isolate the basic algorithm frameworks from other practical enhancements, such as the adaptive adjustment of Lipschitz constants, see \cite{mishchenko2023regularized,cartis2011adaptive}.

We implement two different accelerated TR methods (\atr{} and \atrms{}) and compare them to some state-of-the-art SOMs, including
\begin{itemize}[leftmargin=*]
    \item The cubic regularized Newton method (\arc{}, \citet{nesterov2006cubic}) and its accelerated version (\arca{}, \citet{nesterov2008accelerating}).
    \item A non-accelerated TR method using the \ref{eq.newTR} oracle, by setting both $(\sigma_k, r_k)$ proportionally to $\|\nabla f(x_k)\|^{1/2}$ similar to \cite{jiang2026beyond}.
          We test two non-accelerated TR methods $\utr{}~\texttt{(1)}, \utr{}~\texttt{(2)}$, using different Lipschitz estimates $\tfrac{\widehat M}{2}, \widehat M$, respectively. The purpose is to present the sensitivity of Lipschitz constants and set a fair comparison to \arc{}.
\end{itemize}
\begin{figure}[!t]
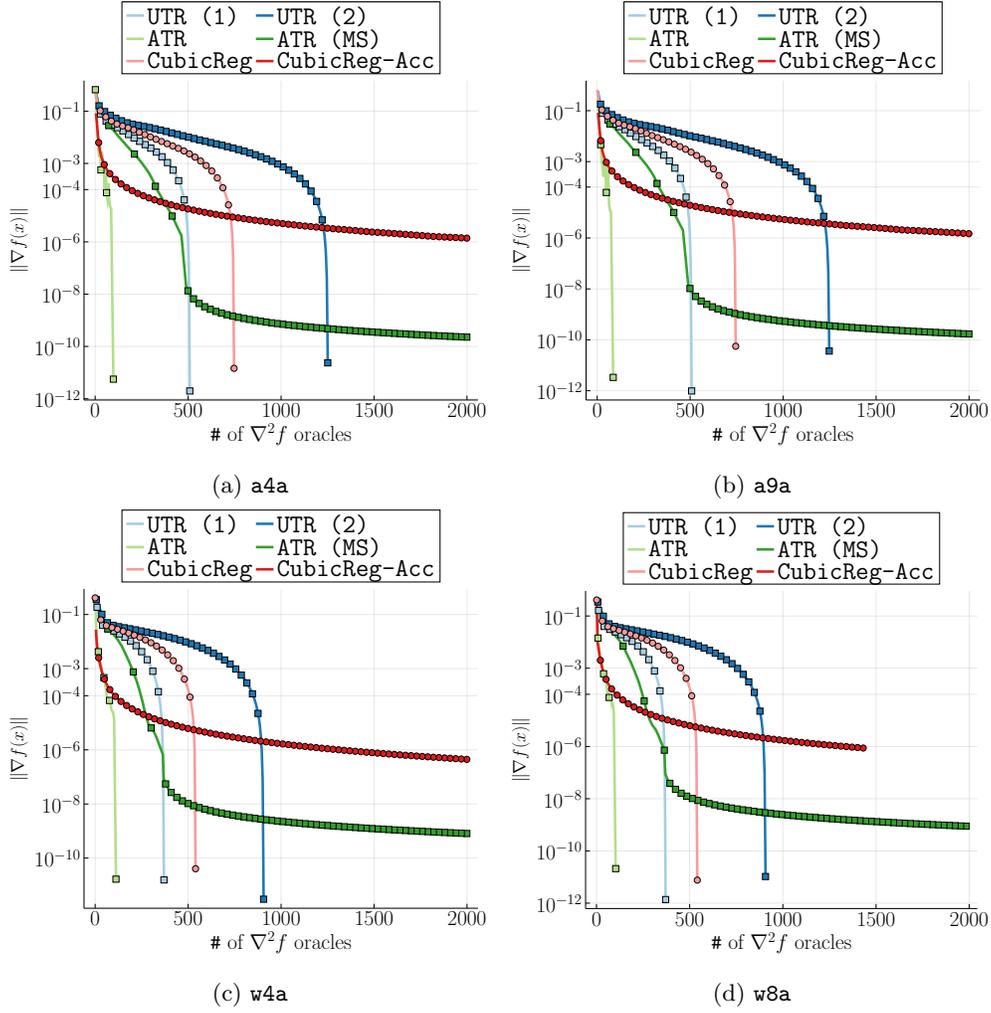

    \small
    \centering
    \subcaptionbox{\texttt{a4a}, $N=4781, n=122$, nnz: 11.36\%.}[0.42\columnwidth]{
        \resizebox{0.42\columnwidth}{!}{\input{e-logistic-a4a-k}}
    }
    \subcaptionbox{\texttt{a9a}, $N=32561, n=123$, nnz: 11.27\%.}[0.42\columnwidth]{
        \resizebox{0.42\columnwidth}{!}{\input{e-logistic-a9a-k}}
    }
    \subcaptionbox{\texttt{w4a}, $N=7366, n=300$, nnz: 3.89\%.}[0.42\columnwidth]{
        \resizebox{0.42\columnwidth}{!}{\input{e-logistic-w4a-k}}
    }
    \subcaptionbox{\texttt{w8a}, $N=49749, n=300$, nnz: 3.88\%.}[0.42\columnwidth]{
        \resizebox{0.42\columnwidth}{!}{\input{e-logistic-w8a-k}}
    }

    \caption{Logistic regression using the LIBSVM datasets.}\label{fig.logistic regression}
    \normalsize
\end{figure}

The subproblems arising in \arc{} and \arca{} are solved by a 1-D line-search strategy according to \cite{nesterov2018lectures}. Similarly, \ref{eq.newTR} (in \utr{}, \atr{} and \atrms{}) is solved by searching the dual variable. All methods use exact Hessian evaluation and Cholesky factorization to solve the linear systems.
Since these SOMs use different subproblems, and theoretically, the complexity rates to solve them vary from $O\left(\log\left(1/\epsilon\right)\right)$ (for subproblems in a cubic regularized method \cite{nesterov2006cubic}) to $O\left(\log\log\left(1/\epsilon\right)\right)$ (for \ref{eq.newTR} \cite{ye1991new,vavasis1990proving}),
we report the number of Hessian evaluations needed in the method. In \Cref{fig.logistic regression}, the performance of the SOMs on some LIBSVM datasets\footnote{For details, see \url{https://www.csie.ntu.edu.tw/cjlin/libsvmtools/datasets/}} is reported; each row of the instances has similar sparsity and dimension but different sample size.

We could have several observations. Firstly, for non-accelerated methods, a trend of local superlinear convergence can be observed in \utr{}~\texttt{(1)}, \utr{}~\texttt{(2)} and \arc{}. We could conclude that these three methods are comparable.
Secondly, in the beginning of the iterations, all accelerated methods, including \atr{}, \atrms{}, and \arca{}, converge faster than the non-accelerated methods (e.g., \utr{}~\texttt{(1)}, \utr{}~\texttt{(2)}, \arc{}). This confirms the effectiveness of the global acceleration.

Thirdly, the global convergence does come at the expense of local superlinear convergence.
Notably, \atr{} is the only accelerated method that has the local rate of superlinear convergence because of the diving track.
Both \atrms{} and \arca{} converge sublinearly in the local regime, in which \atrms{} is slightly better because of its superior $\tilde O(\epsilon_f^{-2/7})$ non-asymptotic performance. These results are in accordance with what is predicted in the convergence analysis.

\newpage
\addcontentsline{toc}{section}{References}

\bibliographystyle{plainnat}
\bibliography{ref_arxiv}
\newpage
\appendix
\section{Technical proofs in~\Cref{sec.priliminary}}\label{sec.proof pre}
\subsection{Auxiliary results of estimating sequence}
\begin{lemma}
    \label{lem.magnitude a and A}
    For $k\geq 0$, we have
    \begin{equation}
        \label{eq.magnitude a and A}
        \frac{A_{k+1}}{a_k^{3/2}} \geq \frac{\sqrt{2}}{3}.
    \end{equation}
\end{lemma}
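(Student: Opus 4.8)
The plan is to reduce the claimed inequality $A_{k+1}/a_k^{3/2} \ge \sqrt{2}/3$ to an elementary comparison between polynomials in $k$. Substituting the closed-form expressions from \eqref{eq.update a and A}, we have $a_k = \tfrac{(k+1)(k+2)}{2}$ and $A_{k+1} = \tfrac{(k+1)(k+2)(k+3)}{6}$. Therefore
\[
\frac{A_{k+1}}{a_k^{3/2}} = \frac{(k+1)(k+2)(k+3)/6}{\bigl((k+1)(k+2)/2\bigr)^{3/2}} = \frac{2^{3/2}}{6}\cdot\frac{(k+1)(k+2)(k+3)}{\bigl((k+1)(k+2)\bigr)^{3/2}} = \frac{\sqrt{2}}{3}\cdot\frac{k+3}{\sqrt{(k+1)(k+2)}}.
\]
So the lemma is equivalent to showing that $\dfrac{k+3}{\sqrt{(k+1)(k+2)}} \ge 1$ for all $k \ge 0$, i.e. $(k+3)^2 \ge (k+1)(k+2)$.

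The second step is this last inequality, which is immediate: expanding both sides gives $(k+3)^2 = k^2 + 6k + 9$ and $(k+1)(k+2) = k^2 + 3k + 2$, so their difference is $3k + 7 > 0$ for every $k \ge 0$. Hence the factor $\tfrac{k+3}{\sqrt{(k+1)(k+2)}}$ is at least $1$, and multiplying back by $\sqrt{2}/3$ yields $A_{k+1}/a_k^{3/2} \ge \sqrt{2}/3$, as claimed. (One can also note the factor tends to $1$ as $k\to\infty$, so the constant $\sqrt{2}/3$ is tight asymptotically; this is consistent with its role in calibrating the estimating-sequence step sizes later.)

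There is essentially no obstacle here: the only mild point of care is algebraic bookkeeping of the exponent $3/2$ — making sure the $2^{3/2}$ from $a_k^{-3/2}$ and the $1/6$ from $A_{k+1}$ combine correctly into $\sqrt 2/3$ — and confirming the edge case $k=0$, where the ratio equals $\tfrac{\sqrt 2}{3}\cdot\tfrac{3}{\sqrt 2} = 1 \ge \tfrac{\sqrt 2}{3}$. Since this is quoted from \citet{nesterov_lectures_2018}, an alternative would simply be to cite the reference, but the self-contained one-line computation above is cleaner.
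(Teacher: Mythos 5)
Your proof is correct and follows essentially the same route as the paper: substitute the closed forms for $a_k$ and $A_{k+1}$, simplify the ratio to $\tfrac{\sqrt{2}}{3}\cdot\tfrac{k+3}{\sqrt{(k+1)(k+2)}}$, and observe that the remaining factor is at least $1$. The only difference is that you spell out the last step as $(k+3)^2 \ge (k+1)(k+2)$, which the paper leaves implicit.
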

\begin{proof}The conclusion follows from that
    $$\frac{A_{k+1}}{a_k^{3/2}}= \frac{(k+1)(k+2)(k+3)}{6}\cdot \frac{2^{3/2}}{(k+1)^{3/2}(k+2)^{3/2}} = \frac{\sqrt{2}(k+3)}{3(k+1)^{1/2}(k+2)^{1/2}} \geq \frac{\sqrt{2}}{3}.$$
\end{proof}
We call a differentiable function $d(x)$ on $\mathbb{R}^n$ uniformly convex \cite[Section 4.2.2]{nesterov2018lectures} of degree $p\geq 2$ with constant $q>0$ if
\begin{equation}
    \label{eq.uniformly convex of p}
    d(y)\geq d(x)+\langle \nabla d(x),y-x\rangle +\frac{q}{p}\|y-x\|^p, \ \forall x,y\in\mathbb{R}^n.
\end{equation}
\begin{lemma}
    \label{lem.property of phi}
    For the function sequence $\phi_k(x)$, they have the following properties
    \begin{enumerate}[label=(\roman*)]
        \item $\phi_k(x)$ is uniformly convex of degree $3$ with constant $12M$.
        \item $v_k$ is the unique minimizer of $\phi_k(x)$, and
              \begin{equation}
                  \label{eq.phi_k phi_low}
                  \phi_k(x)\geq \phi_k^* + 4M\|x-v_k\|^3,
              \end{equation}
              where $\phi_k^* =\min \phi_k(x).$
    \end{enumerate}
\end{lemma}
\begin{proof}
    The first claim is from the definition of $\phi_0$ in \eqref{eq.update phi}, for analysis of uniform convex functions, please refer to \cite[Section 4.2.2]{nesterov2018lectures}. For the second claim, we have
    \begin{equation}
        \label{eq.phi recursive}
        \phi_k(x) = \phi_0(x)+\sum_{i=0}^{k-1}a_i\left (f(x_{i+1})+\langle \nabla f(x_{i+1}),x-x_{i+1}\rangle \right).
    \end{equation}
    From the optimality condition, we have that
    \begin{equation*}
        \nabla \phi_k(x) = \nabla \phi_0(x)+ \sum_{i=0}^{k-1} a_i\nabla f(x_{i+1}) = 24M\|x-x_0\|(x-x_0)+\sum_{i=0}^{k-1} a_i\nabla f(x_{i+1}) =0.
    \end{equation*}
    Solving the optimality condition and noticing the way we update $s_k$ gives
    \begin{equation*}
        x = v_0 - \sqrt{\frac{1}{24M\|s_{k}\|}}s_{k},
    \end{equation*}
    which is $v_k$. Then \eqref{eq.phi_k phi_low} holds from the fact that $\phi_k(x)$ is uniformly convex of degree 3.
\end{proof}
\subsection*{Proof to~\Cref{lem.bound next gradient}}
\begin{proof}
    From \eqref{eq.second-order exp}, we have
    \begin{equation*}
        \|\nabla f(x+d)- \nabla f(x) -\nabla ^2 f(x) d\| \leq \frac{M}{2}\|d\|^2.
    \end{equation*}
    By \eqref{eq.second-order update} and the above, we have
    \begin{equation}\label{eq.bounded by squared step}
        \|\nabla f(x+d)+\mu d\| \leq \frac{M}{2}\|d\|^2.
    \end{equation}
    Applying the triangle inequality and rearranging items, we can derive \eqref{eq.bound next gradient}.
\end{proof}
\subsection*{Proof to~\Cref{lem.inner product g and d}}
\begin{proof}
    Squaring both sides of \eqref{eq.bounded by squared step} and rearranging items, we have
    \begin{equation*}
        \begin{aligned}
            2\mu \langle \nabla f(x+d),-d \rangle & \geq \|\nabla f(x+d)\|^2 +\mu^2 \|d\|^2 -\frac{M^2}{4}\|d\|^4 \\
                                                  & \geq \|\nabla f(x+d)\|^2 +\frac{3}{4}\mu^2 \|d\|^2.
        \end{aligned}
    \end{equation*}
    The second line is due to $\mu\geq M\|d\|$, dividing both sides by $2\mu$ gives \eqref{eq.inner product g d}. Further, when $\mu \leq 2M\|d\|$, the above gives
    \begin{equation*}
        4M\|d\| \langle \nabla f(x+d),-d \rangle \geq \|\nabla f(x+d)\|^2 +\frac{3}{4} M^2 \|d\|^4,
    \end{equation*}
    which yields
    \begin{equation*}
        \langle \nabla f(x+d),-d \rangle \geq \frac{\|\nabla f(x+d)\|^2}{4M\|d\|}+\frac{3}{16}M\|d\|^3.
    \end{equation*}
    Consider an auxiliary function $h(t) = \frac{\|\nabla f(x+d)\|^2}{4Mt}+\frac{3}{16}Mt^3$ where $t\geq 0$, taking derivatives gives $h(t)$ achieves its minimum at $t^*= \frac{\sqrt{2}\|\nabla f(x+d)\|^{1/2}}{\sqrt{3M}}$, plugging $t^*$ back gives \eqref{eq.inner product g d advanced}.
\end{proof}
\section{Technical proofs in~\Cref{sec.first variant}}\label{sec.proof variant 1}
\subsection*{Proof to \Cref{lem.bound target interval}}
To prove \Cref{lem.bound target interval}, we first introduce the following lemma, which discusses the property of the auxiliary function $g_y(\mu)$.
\begin{lemma}
    \label{lem.property g}
    For any $y\in \mathbb{R}^n$, $g_y(\mu)$ is continuously differentiable and monotonically increasing for $\mu \in \left(0,+\infty\right)$ and
    \begin{equation}
        \label{eq.derivative}
        g_y'(\mu) = \frac{1}{\|\left (\nabla ^2 f(y)+\mu I \right)^{-1}\nabla f(y)\|}+\frac{\mu}{\| (\nabla ^2 f(y)+\mu I )^{-1}\nabla f(y)\|^3}\sum_{i=1}^n\frac{\beta_i^2}{ (\zeta_i+\mu )^3},
    \end{equation}
    and it is bounded both below and above
    \begin{equation}
        \label{eq.derivative bound}
        0< g_y'(\mu) \leq \frac{2}{\| (\nabla ^2 f(y)+\mu I )^{-1}\nabla f(y)\|}.
    \end{equation}
    Where $\zeta_i$ is the $i$-th eigenvalue of $\nabla ^2 f(y)$, $\beta_i = \nabla f(y)^T v_i$, $v_i$ denotes the eigenvector corresponding to $\zeta_i$.
\end{lemma}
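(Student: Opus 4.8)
\textbf{Proof proposal for \Cref{lem.property g}.}
The plan is to work directly with the spectral representation of $r_y(\mu)$ already used in the proof of \Cref{lem.one-to-one correspondence}. Writing the eigendecomposition $\nabla^2 f(y) = V\Lambda V^\top$ with $\Lambda = \mathrm{diag}(\lambda_1,\dots,\lambda_n)$, $V$ orthogonal, and $\beta_i = \nabla f(y)^\top v_i$, we have
\begin{equation*}
    r_y(\mu)^2 = \sum_{i=1}^n \frac{\beta_i^2}{(\lambda_i+\mu)^2},
\end{equation*}
and hence $g_y(\mu) = \mu / r_y(\mu)$. Since each $\lambda_i \ge 0$, every summand is a smooth function of $\mu$ on $(0,+\infty)$, and $r_y(\mu) > 0$ there (assuming $\nabla f(y)\neq 0$; the degenerate case $\nabla f(y)=0$ is trivial because then $g_y$ is not the object of interest — the iterate is already stationary), so $g_y$ is continuously differentiable on $(0,+\infty)$. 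First I would compute $\frac{d}{d\mu} r_y(\mu)^2 = -2\sum_i \beta_i^2/(\lambda_i+\mu)^3$, so that $r_y'(\mu) = -\frac{1}{r_y(\mu)}\sum_i \beta_i^2/(\lambda_i+\mu)^3$. Then the quotient rule gives
\begin{equation*}
    g_y'(\mu) = \frac{r_y(\mu) - \mu\, r_y'(\mu)}{r_y(\mu)^2}
    = \frac{1}{r_y(\mu)} + \frac{\mu}{r_y(\mu)^3}\sum_{i=1}^n \frac{\beta_i^2}{(\lambda_i+\mu)^3},
\end{equation*}
which is exactly \eqref{eq.derivative} after substituting back $r_y(\mu) = \|(\nabla^2 f(y)+\mu I)^{-1}\nabla f(y)\|$. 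Positivity of $g_y'$ (hence strict monotonic increase) is immediate since both terms are nonnegative and the first is strictly positive.

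For the upper bound in \eqref{eq.derivative bound} I would bound the second term by $1/r_y(\mu)$, i.e.\ show
\begin{equation*}
    \frac{\mu}{r_y(\mu)^3}\sum_{i=1}^n \frac{\beta_i^2}{(\lambda_i+\mu)^3} \le \frac{1}{r_y(\mu)},
\end{equation*}
which is equivalent to $\mu \sum_i \beta_i^2/(\lambda_i+\mu)^3 \le r_y(\mu)^2 = \sum_i \beta_i^2/(\lambda_i+\mu)^2$. This follows termwise: for each $i$, $\mu/(\lambda_i+\mu)^3 \le 1/(\lambda_i+\mu)^2$ because $\mu \le \lambda_i + \mu$ (using $\lambda_i \ge 0$). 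Summing these termwise inequalities with the nonnegative weights $\beta_i^2$ gives the claim, and therefore $g_y'(\mu) \le 2/r_y(\mu)$.

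I do not anticipate a genuine obstacle here; the only point requiring a little care is making sure the differentiation under the sum is legitimate, which is automatic since the sum is finite and each term is $C^\infty$ on $(0,+\infty)$ thanks to $\lambda_i \ge 0$ (so $\lambda_i + \mu$ never vanishes for $\mu>0$), and ensuring $r_y(\mu)\neq 0$ so that the quotient is well defined — this holds whenever $\nabla f(y)\neq 0$, which is the only case in which this lemma is invoked inside R\&B. The termwise comparison $\mu \le \lambda_i+\mu$ is where the hypothesis $\nabla^2 f(y)\succeq 0$ (equivalently $\lambda_i\ge 0$) is used, and it is worth flagging that this is precisely the structural feature of the \ref{eq.newTR} subproblem that the bound exploits.
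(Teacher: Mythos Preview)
Your proposal is correct and follows essentially the same approach as the paper: both use the spectral representation $r_y(\mu)^2 = \sum_i \beta_i^2/(\lambda_i+\mu)^2$, differentiate via the quotient rule to obtain \eqref{eq.derivative}, and derive the upper bound in \eqref{eq.derivative bound} by the termwise inequality $\mu/(\lambda_i+\mu)^3 \le 1/(\lambda_i+\mu)^2$, which uses $\lambda_i\ge 0$. Your write-up is in fact slightly more explicit than the paper's (which just says ``through some basic calculations''), but the argument is the same.
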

\begin{proof}
    Since $\nabla^2 f(y)+\mu I \succ 0$ for $\mu>0$, we can apply eigen decomposition to it, then
    \begin{equation*}
        \left \| (\nabla ^2 f(y)+\mu I )^{-1}\nabla f(y)\right\|=\sqrt{\sum_{i=1}^n \frac{\beta_i^2}{\left (\zeta_i+\mu\right )^2}}, \ g_y(\mu) = \frac{\mu}{\sqrt{\sum_{i=1}^n \frac{\beta_i^2}{\left (\zeta_i+\mu\right )^2}}}.
    \end{equation*}
    Obviously $g_y(\mu)$ is increasing in $\mu \in \left (0,+\infty\right ]$. Also, through some basic calculations, we have
    \begin{equation*}
        \begin{aligned}
            g_y'(\mu) & = \frac{1}{\sqrt{\sum_{i=1}^n \frac{\beta_i^2}{\left (\zeta_i+\mu\right )^2}}}+\mu \frac{1}{\left ( \sum_{i=1}^n \frac{\beta_i^2}{\left (\zeta_i+\mu\right )^2}\right )^{3/2}}\sum_{i=1}^n\frac{\beta_i^2}{\left (\zeta_i+\mu\right )^3} \\
                      & = \frac{1}{\| (\nabla ^2 f(y)+\mu I )^{-1}\nabla f(y)\|}+\frac{\mu}{\| (\nabla ^2 f(y)+\mu I )^{-1}\nabla f(y)\|^3}\sum_{i=1}^n\frac{\beta_i^2}{\left (\zeta_i+\mu\right )^3}                                                            \\
                      & \leq_{(a)} \frac{1}{ \| (\nabla ^2 f(y)+\mu I )^{-1}\nabla f(y)\|}+\frac{\mu}{\| (\nabla ^2 f(y)+\mu I )^{-1}\nabla f(y)\|^3}\sum_{i=1}^n\frac{\beta_i^2}{\mu\left (\zeta_i+\mu\right )^2}                                               \\
                      & =  \frac{2}{\| (\nabla ^2 f(y)+\mu I )^{-1}\nabla f(y)\|}.
        \end{aligned}
    \end{equation*}
    $(a)$ is because of $\zeta_i \geq 0$ for $i=1,\ldots,n$.
\end{proof}

Now we are ready to formally prove \Cref{lem.bound target interval}.
\begin{proof}
    From the continuity and monotonicity of $g_y(\mu)$, we know there exists interval $[\mu_l,\mu_u]$ with \eqref{eq.target interval bracketing points} and $M\leq g_y(\mu)\leq 2M$ for all $\mu$ in this interval~(intermediate value theorem).

    Now we show that the length of the target interval for $\mu$ is bounded below; By the mean value theorem, there exists $\xi\in \left [\mu_l,\mu_u\right]$, such that
    \begin{equation*}
        M=g_y(\mu_u)-g_y(\mu_l) = g_y'(\xi)(\mu_u-\mu_l),
    \end{equation*}
    combine the above with \eqref{eq.derivative bound} and \eqref{eq.target interval bracketing points} we have
    \begin{equation*}
        \mu_u-\mu_l =  \frac{1}{g_y'(\xi)}\left(g_y(\mu_u)-g_y(\mu_l)\right) \geq \frac{M\| (\nabla ^2 f(y)+\xi I  )^{-1}\nabla f(y)\|}{2} \geq_{(a)} \frac{M\| (\nabla ^2 f(y)+\mu_u I  )^{-1}\nabla f(y)\|}{2}.
    \end{equation*}
    $(a)$ is because of $\mu_u >\xi$.
\end{proof}
\subsection*{Proof to \Cref{lem.bound initial interval}}
\begin{proof}
    Note
    \[
        r_y(\mu)
        =
        \|(\nabla^2 f(y)+\mu I)^{-1}\nabla f(y)\|.
    \]
    Let the target interval for \(r\) be \([r_l,r_u]\), where
    \[
        r_l=r_y(\mu_u),
        \qquad
        r_u=r_y(\mu_l).
    \]
    Since \(r_y(\mu)\) is decreasing in \(\mu\), we have \(r_l<r_u\). The
    bisection procedure terminates whenever the current radius \(r\) lies in
    \([r_l,r_u]\). Hence the number of bisection steps is bounded by
    \begin{equation}
        \label{eq.rb-complexity-basic}
        O\left(
        \log\frac{r_+-r_-}{r_u-r_l}
        \right)
        \le
        O\left(
        \log\frac{r_+}{r_u-r_l}
        \right).
    \end{equation}

    We first lower bound the length \(r_u-r_l\). Let
    \[
        \nabla^2 f(y)=V\operatorname{diag}(\zeta_1,\ldots,\zeta_n)V^\top,
        \qquad
        \beta_i=\nabla f(y)^\top v_i.
    \]
    By convexity and Assumption~\ref{assm.bounded hessian}, we have
    \[
        0\le \zeta_i\le \kappa_H,
        \qquad i=1,\ldots,n.
    \]
    For \(\mu>0\),
    \[
        r_y(\mu)
        =
        \left(
        \sum_{i=1}^n
        \frac{\beta_i^2}{(\zeta_i+\mu)^2}
        \right)^{1/2}.
    \]
    Therefore,
    \begin{equation}
        \label{eq.derivative-ry}
        r_y'(\mu)
        =
        -
        \frac{
            \sum_{i=1}^n
            \frac{\beta_i^2}{(\zeta_i+\mu)^3}
        }{
            r_y(\mu)
        }.
    \end{equation}
    By the mean value theorem, there exists
    \(\xi\in[\mu_l,\mu_u]\) such that
    \begin{equation}
        r_u-r_l
        =
        r_y(\mu_l)-r_y(\mu_u)
        =
        -r_y'(\xi)(\mu_u-\mu_l)
        =
        \frac{
            \sum_{i=1}^n
            \frac{\beta_i^2}{(\zeta_i+\xi)^3}
        }{
            r_y(\xi)
        }
        (\mu_u-\mu_l).
        \label{eq.radius-interval-mvt}
    \end{equation}
    Since \(\xi\le \mu_u\) and \(\zeta_i\le \kappa_H\), we have
    \[
        \frac{1}{\zeta_i+\xi}
        \ge
        \frac{1}{\kappa_H+\mu_u}.
    \]
    Thus
    \begin{equation}
        \sum_{i=1}^n
        \frac{\beta_i^2}{(\zeta_i+\xi)^3}
        =
        \sum_{i=1}^n
        \frac{\beta_i^2}{(\zeta_i+\xi)^2}
        \frac{1}{\zeta_i+\xi}
        \ge
        \frac{1}{\kappa_H+\mu_u}
        \sum_{i=1}^n
        \frac{\beta_i^2}{(\zeta_i+\xi)^2}
        =
        \frac{r_y(\xi)^2}{\kappa_H+\mu_u}.
        \label{eq.s3-lower-by-s2}
    \end{equation}
    Combining \eqref{eq.radius-interval-mvt} and \eqref{eq.s3-lower-by-s2}, we obtain
    \begin{equation}
        \label{eq.radius-interval-lower-1}
        r_u-r_l
        \ge
        \frac{r_y(\xi)}{\kappa_H+\mu_u}
        (\mu_u-\mu_l).
    \end{equation}
    Since \(r_y(\mu)\) is decreasing and \(\xi\le \mu_u\), we have
    \[
        r_y(\xi)\ge r_y(\mu_u).
    \]
    Moreover, by \Cref{lem.bound target interval},
    \[
        \mu_u-\mu_l
        \ge
        \frac{M r_y(\mu_u)}{2}.
    \]
    Therefore,
    \begin{equation}
        \label{eq.radius-interval-lower-2}
        r_u-r_l
        \ge
        \frac{M r_y(\mu_u)^2}{2(\kappa_H+\mu_u)}.
    \end{equation}
    Finally, since \(0\preceq \nabla^2 f(y)\preceq \kappa_H I\), we have
    \[
        r_y(\mu_u)
        =
        \|(\nabla^2 f(y)+\mu_u I)^{-1}\nabla f(y)\|
        \ge
        \frac{\|\nabla f(y)\|}{\kappa_H+\mu_u}.
    \]
    Substituting this into \eqref{eq.radius-interval-lower-2} gives
    \begin{equation}
        \label{eq.radius-interval-lower-final}
        r_u-r_l
        \ge
        \frac{M\|\nabla f(y)\|^2}{2(\kappa_H+\mu_u)^3}.
    \end{equation}

    We now return to the bisection complexity. From \eqref{eq.rb-complexity-basic}
    and \eqref{eq.radius-interval-lower-final}, we obtain
    \begin{equation}
        \label{eq.rb-complexity-1}
        O\left(
        \log\frac{r_+}{r_u-r_l}
        \right)
        \le
        O\left(
        \log
        \frac{
                2r_+(\kappa_H+\mu_u)^3
            }{
                M\|\nabla f(y)\|^2
            }
        \right).
    \end{equation}
    By the definition of \(r_+\) in line~\ref{line.left bracketing point} of
    \Cref{alg.local detection},
    \[
        r_+
        =
        \|(\nabla^2 f(y)+M\|d_+\|I)^{-1}\nabla f(y)\|.
    \]
    Since \(\nabla^2 f(y)\succeq0\), we have
    \[
        r_+
        \le
        \frac{\|\nabla f(y)\|}{M\|d_+\|}.
    \]
    Together with \(\mu_u\le\mu_+\), this yields
    \begin{equation}
        \label{eq.rb-complexity-2}
        O\left(
        \log
        \frac{
                2r_+(\kappa_H+\mu_u)^3
            }{
                M\|\nabla f(y)\|^2
            }
        \right)
        \le
        O\left(
        \log
        \frac{
                2(\kappa_H+\mu_+)^3
            }{
                M^2\|d_+\|\|\nabla f(y)\|
            }
        \right).
    \end{equation}

    When the bisection procedure is invoked, line~\ref{line.check right bracket point}
    of \Cref{alg.local detection} implies
    \(
    \frac{\mu_+}{\|d_+\|}>2M.
    \)
    Since
    \(
    \mu_+
    =
    \frac{\sqrt{2M}}{2}\|\nabla f(y)\|^{1/2},
    \)
    we have
    \(
    \|\nabla f(y)\|
    =
    \frac{2\mu_+^2}{M}
    >
    8M\|d_+\|^2.
    \)
    Therefore,
    \[
        M^2\|d_+\|\|\nabla f(y)\|
        >
        8M^3\|d_+\|^3.
    \]
    Hence \eqref{eq.rb-complexity-2} further implies
    \begin{equation}
        \label{eq.rb-complexity-3}
        O\left(
        \log
        \frac{
                2(\kappa_H+\mu_+)^3
            }{
                M^2\|d_+\|\|\nabla f(y)\|
            }
        \right)
        \le
        O\left(
        \log
        \frac{
                (\kappa_H+\mu_+)^3
            }{
                M^3\|d_+\|^3
            }
        \right).
    \end{equation}

    It remains to use the fact that the early-termination test in
    \Cref{alg.local detection} has not been satisfied before entering R\&B.
    Thus
    \[
        \|d_+\|
        >
        \min\left\{
        \frac{\epsilon_g}{2\kappa_H},
        \sqrt{\frac{\epsilon_g}{M}}
        \right\}.
    \]
    Consequently,
    \[
        \frac{1}{\|d_+\|^3}
        \le
        \max\left\{
        \left(\frac{2\kappa_H}{\epsilon_g}\right)^3,
        \left(\frac{M}{\epsilon_g}\right)^{3/2}
        \right\}.
    \]
    Substituting this into \eqref{eq.rb-complexity-3}, we obtain
    \begin{align*}
        O\left(
        \log
        \frac{
                (\kappa_H+\mu_+)^3
            }{
                M^3\|d_+\|^3
            }
        \right)
         & \le
        O\left(
        \log
        \left[
            \frac{(\kappa_H+\mu_+)^3}{M^3}
            \max\left\{
            \left(\frac{2\kappa_H}{\epsilon_g}\right)^3,
            \left(\frac{M}{\epsilon_g}\right)^{3/2}
            \right\}
            \right]
        \right) \\
         & =
        O\left(
        \log\frac{\kappa_H+\mu_+}{\epsilon_g}
        \right).
    \end{align*}
\end{proof}
\subsection*{Proof to \Cref{lem.enter region}}
\begin{proof}
    Let $R=\|x_0-x^*\|$ and define
    \[
        \rho:=\min\left\{\frac{2\nu}{3M},\frac{2\nu^2}{9\kappa_HM}\right\}.
    \]
    It suffices to show $\|y_k-x^*\|<\rho$, since Assumption~\ref{assm.bounded hessian} and
    $\nabla f(x^*)=0$ imply
    \[
        \|\nabla f(y_k)\|
        =\|\nabla f(y_k)-\nabla f(x^*)\|
        \leq \kappa_H\|y_k-x^*\|.
    \]

    We first derive two elementary bounds. From the estimating-sequence relation with
    $x=x^*$, we have, for every $k\geq1$,
    \[
        f(x_k)-f^*
        \leq \frac{\phi_0(x^*)}{A_k}
        = \frac{48MR^3}{k(k+1)(k+2)}
        \leq \frac{48MR^3}{k^3}.
    \]
    Moreover, using \eqref{eq.phi_k phi_low} at $x=x^*$ and the estimating-sequence
    upper bound,
    \[
        4M\|v_k-x^*\|^3
        \leq \phi_k(x^*)-\phi_k^*
        \leq_{(a)} \phi_0(x^*)
        =8MR^3.
    \]
    $(a)$ is from $\phi_k(x^*)\le A_k f(x^*)+\phi_0(x^*) \le \phi_k^*+\phi_0(x^*)$. Therefore,
    \(
    \|v_k-x^*\|\leq 2^{1/3}R.
    \)

    Next we show that $x_k$ is already in $\mathcal{LQ}_p$ for the values of $k$
    considered below. For any $x$ satisfying
    $\|x-x^*\|=2\nu/(3M)$, Taylor's theorem and \eqref{eq.psd region} give
    \[
        f(x)-f^*
        \geq \frac{\nu}{6}\|x-x^*\|^2
        = \frac{2\nu^3}{27M^2}.
    \]
    By convexity, the same lower bound holds for every
    $x$ with $\|x-x^*\|\geq 2\nu/(3M)$. Hence, if
    \[
        f(x_k)-f^*<\frac{2\nu^3}{27M^2},
    \]
    then $x_k\in\mathcal{LQ}_p$. In particular, the preceding global bound shows that
    this holds whenever
    \[
        k\geq \left(648\right)^{1/3}\frac{MR}{\nu}.
    \]

    For such $k$, \eqref{eq.psd xk} holds along the segment joining $x_k$ and $x^*$,
    and hence
    \[
        \frac{\nu}{6}\|x_k-x^*\|^2
        \leq f(x_k)-f^*
        \leq \frac{48MR^3}{k^3}.
    \]
    Thus
    \[
        \|x_k-x^*\|
        \leq \sqrt{\frac{288MR^3}{\nu k^3}}.
    \]

    Using the definition of $y_k$:
    \(
    y_k=\frac{k}{k+3}x_k+\frac{3}{k+3}v_k,
    \)
    we obtain
    \[
        \|y_k-x^*\|
        \leq \frac{k}{k+3}\|x_k-x^*\|
        +\frac{3}{k+3}\|v_k-x^*\|
        \leq \sqrt{\frac{288MR^3}{\nu k^3}}
        +\frac{3\cdot 2^{1/3}R}{k}.
    \]

    Finally, since Assumption~\ref{assm.bounded hessian} and
    Assumption~\ref{assm.local strongly convex} imply $\kappa_H\geq \nu$, we have
    \[
        \rho
        =\min\left\{\frac{2\nu}{3M},\frac{2\nu^2}{9\kappa_HM}\right\}
        =\frac{2\nu^2}{9\kappa_HM}.
    \]
    If
    \[
        k\geq 36\,\frac{\kappa_H M R}{\nu^2},
    \]
    then this condition also implies
    \[
        k\geq (648)^{1/3}\frac{MR}{\nu},
    \]
    and the two terms above satisfy
    \[
        \sqrt{\frac{288MR^3}{\nu k^3}}
        \leq \frac{\rho}{2},
        \qquad
        \frac{3\cdot 2^{1/3}R}{k}
        \leq \frac{\rho}{2}.
    \]
    Consequently,
    \(
    \|y_k-x^*\|\leq \rho.
    \)
    Therefore,
    \[
        \|y_k-x^*\|\leq \frac{2\nu}{3M},
        \qquad
        \|\nabla f(y_k)\|
        \leq \kappa_H\|y_k-x^*\|
        \leq \frac{2\nu^2}{9M},
    \]
    which means $y_k\in\mathcal{LQ}_g$.
\end{proof}

\subsection*{Lemma~\ref{lem.loglog vs log} and its proof}
\begin{lemma}\label{lem.loglog vs log}
    Let $\{a_i\}$ satisfy $a_i \le C\,a_{i-1}^2$ with $C>0$ and $0<Ca_0<1$. Let $\eta_0 = Ca_0$ and define
    \[
        \epsilon^* \;=\; \frac{1}{C} \Big(\eta_0\ln\!\frac{1}{\eta_0}\Big)^2.
    \]
    Let $i_{\min} = \min\{i\in\mathbb{N}:\ a_i\le \epsilon\}$. Then for all $0 < \epsilon \leq \epsilon^*$,
    \[
        i_{\min}\;\le\;\Big\lceil \log_2\!\frac{a_0}{\epsilon}\Big\rceil.
    \]
\end{lemma}

\begin{proof}
    From the quadratic recurrence, we have the bound
    \[
        a_i \;\le\; C^{2^{i}-1} a_0^{2^{i}} \;=\; \frac{(C a_0)^{2^{i}}}{C} \;=\; \frac{\eta_0^{2^{i}}}{C}\,.
    \]
    Thus, $a_i\le\epsilon$ is guaranteed if $\eta_0^{2^{i}}\le C\epsilon$, which is equivalent to
    \[
        2^{i} \;\ge\; \frac{\ln(1/(C\epsilon))}{\ln(1/\eta_0)}.
    \]
    It follows that
    \[
        i_{\min}\;\le\;\Big\lceil \log_2\!\Big(\frac{\ln(1/(C\epsilon))}{\ln(1/\eta_0)}\Big)\Big\rceil.
    \]
    To bound this by $\lceil \log_2(a_0/\epsilon) \rceil$, it suffices to show
    \[
        \frac{\ln(1/(C\epsilon))}{\ln(1/\eta_0)} \;\le\; \frac{a_0}{\epsilon} \quad\Longleftrightarrow\quad w\ln\!\frac{1}{w}\;\le\;\eta_0\ln\!\frac{1}{\eta_0},
    \]
    where $w := C\epsilon$. Let $K := \eta_0\ln(1/\eta_0)$. Since $\eta_0 \in (0, 1)$, $K \le 1/e$.

    For any $\epsilon \le \epsilon^*$, we have $w = C\epsilon \le K^2 \le (1/e)^2 < 1/e$. Because the function $x \mapsto x\ln(1/x)$ is strictly increasing on $(0, 1/e]$, evaluating it at $K^2$ yields
    \[
        w\ln\!\frac{1}{w} \;\le\; K^2 \ln\!\Big(\frac{1}{K^2}\Big) \;=\; 2K \Big(K\ln\!\frac{1}{K}\Big).
    \]
    By the maximum property of the function $x\ln(1/x)$, we have $K\ln(1/K) \le 1/e$. Substituting this gives
    \[
        w\ln\!\frac{1}{w} \;\le\; \frac{2}{e}K \;<\; K \;=\; \eta_0\ln\!\frac{1}{\eta_0}.
    \]
    This confirms the desired inequality.
\end{proof}
\section{Technical proofs in~\Cref{sec.second variant}}\label{sec.appendix 2}
\subsection*{Proof to \Cref{lem.growth of Ak}}\label{sec.app-c}
\begin{proof}
    First note Line~\ref{line.alg2-update-A} in \Cref{alg.ms accelerated utr}. For $i\geq 0$
    \begin{equation*}
        A_{i+1}^{1/2}-A_i^{1/2} = \frac{a_i}{A_{i+1}^{1/2}+A_i^{1/2}}=\frac{1}{A_{i+1}^{1/2}+A_i^{1/2}}\sqrt{\frac{A_{i+1}}{\sigma_i}}\geq \frac{1}{2\sqrt{\sigma_i}}.
    \end{equation*}
    The second equality comes from $\sigma_ia_i^2=A_i+a_i=A_{i+1}$~(\eqref{eq.update ak}). Summing up the above from $i=0$ to $k-1$ gives
    \begin{equation*}
        A_k \geq \frac{1}{4} \left (\sum_{i=0}^{k-1}\frac{1}{\sigma_i^{1/2}}\right)^2,
    \end{equation*}
    from \eqref{eq.relation lambda sigma}, the above gives
    \begin{equation}\label{eq.Ak helper 1}
        A_k \geq \frac{\eta}{4M} \left (\sum_{i=0}^{k-1} \frac{1}{\|d_i\|^{1/2}}\right)^2,
    \end{equation}
    on the other hand, from \eqref{eq.ms estimate advanced} we have
    \begin{equation*}
        B_k = \frac{3\gamma M}{8}\sum_{i=0}^{k-1} A_{i+1}\|d_i\|^3 \leq \frac{1}{2}\|v_0-x^*\|^2.
    \end{equation*}
    To estimate $A_k$ from below, define $\zeta_i = \|d_i\|^{1/2}$, $D = \frac{4}{3\gamma M}\|v_0-x^*\|^2$, we use the following auxiliary optimization problem
    \begin{equation*}
        \zeta^*=\min _{\zeta \in \mathbb{R}^k}\left\{\sum_{i=0}^{k-1} \frac{1}{\zeta_i}: \quad \sum_{i=0}^{k-1} A_{i+1} \zeta_i^6 \leq D\right\}.
    \end{equation*}
    Introducing the Lagrange multiplier \(\lambda\ge0\), the optimality condition gives
    \begin{equation*}
        -\frac{1}{\zeta_i^2}
        +
        6\lambda A_{i+1}\zeta_i^5
        =
        0,
        \qquad i=0,\ldots,k-1.
    \end{equation*}
    Equivalently, \(
    \frac{1}{\zeta_i^2}
    =
    6\lambda A_{i+1}\zeta_i^5. \) Letting \(w:=6\lambda\), we obtain
    \begin{equation*}
        \frac{1}{\zeta_i^2}
        =
        w A_{i+1}\zeta_i^5,
        \qquad i=0,\ldots,k-1.
    \end{equation*}
    Thus \(
    \zeta_i
    =
    \left(
    \frac{1}{wA_{i+1}}
    \right)^{1/7}. \) We have $w>0$ and the constraint is active,
    \begin{equation*}
        D=\sum_{i=0}^{k-1} A_{i+1}\left(\frac{1}{w A_{i+1}}\right)^{6 / 7}=\frac{1}{w^{6 / 7}} \sum_{i=0}^{k-1} A_{i+1}^{1 / 7},
    \end{equation*}
    therefore $\zeta^*=\sum_{i=0}^{k-1}\left(w A_{i+1}\right)^{1 / 7}=\frac{1}{D^{1 / 6}}\left(\sum_{i=0}^{k-1} A_{i+1}^{1 / 7}\right)^{7 / 6}$, plugging back $\|d_i\| $ and $ \|v_0-x^*\|^2$, we have
    \begin{equation*}
        A_k\geq\sum_{i=0}^{k-1}\frac{1}{\|d_i\|^{1/2}} \geq \left (\frac{3\gamma M}{4\|v_0-x^*\|^2}\right)^{1/6} \left(\sum_{i=0}^{k-1} A_{i+1}^{1 / 7}\right)^{7 / 6},
    \end{equation*}
    from \eqref{eq.Ak helper 1} we have
    \begin{equation}
        \label{eq.Ak helper 2}
        A_k \geq \frac{\eta}{4M}\left (\frac{3\gamma M}{4\|v_0-x^*\|^2}\right)^{1/3} \left(\sum_{i=1}^{k} A_{i}^{1 / 7}\right)^{7 / 3}, \ k\geq 1.
    \end{equation}
    Denote $\omega = \frac{\eta}{4M}\left (\frac{3\gamma M}{4\|v_0-x^*\|^2}\right)^{1/3}$, $C_k =\left(\sum_{i=1}^k A_i^{1 / 7}\right)^{2 / 3}$, plugging them into \eqref{eq.Ak helper 2} we have
    \begin{equation*}
        C_1\geq\omega^{1/7},\ C_{k+1}^{3 / 2}-C_k^{3 / 2} \geq \omega^{1 / 7} C_{k+1}^{1 / 2},
    \end{equation*}
    which gives
    \begin{equation*}
        \begin{aligned}
            \omega^{1 / 7} C_{k+1}^{1 / 2} & \leq\left(C_{k+1}^{1 / 2}-C_k^{1 / 2}\right)\left(C_{k+1}^{1 / 2}\left(C_{k+1}^{1 / 2}+C_k^{1 / 2}\right)+C_k\right)                                                                 \\
                                           & \leq\left(C_{k+1}^{1 / 2}-C_k^{1 / 2}\right)\left(C_{k+1}^{1 / 2}\left(C_{k+1}^{1 / 2}+C_k^{1 / 2}\right)+\frac{1}{2} C_{k+1}^{1 / 2}\left(C_{k+1}^{1 / 2}+C_k^{1 / 2}\right)\right) \\
                                           & =\frac{3}{2} C_{k+1}^{1 / 2}\left(C_{k+1}-C_k\right).
        \end{aligned}
    \end{equation*}
    Thus $C_k \geq \omega^{1 / 7}\left(1+\frac{2}{3}(k-1)\right), k \geq 1$. For $A_k$, by \eqref{eq.Ak helper 2} we have
    \begin{equation*}
        A_k  \geq \omega\left(C_k^{3 / 2}\right)^{7 / 3} \geq \omega\left(\omega^{1 / 7} \cdot \frac{2 k+1}{3}\right)^{7 / 2}=\omega^{3 / 2}\left(\frac{2 k+1}{3}\right)^{7 / 2}
        =\left(\frac{\eta}{4}\left (\frac{3\gamma}{4M^2\|v_0-x^*\|^2}\right)^{1/3}\right)^{3 / 2}\left(\frac{2 k+1}{3}\right)^{3.5}
    \end{equation*}
\end{proof}
\subsection*{Properties of $\psi(\sigma,y)$}
Now we introduce some basic properties of $\psi(\sigma,y)$:
\begin{lemma}
    \label{lem.psi property wrt sigma}
    For any $y\in\mathbb{R}^n$, suppose $0<\sigma\leq \Bar{\sigma}$, then we have
    \begin{equation}
        \label{eq.psi property wrt sigma}
        \left(\frac{\sigma}{\Bar{\sigma}}\right)^2 \psi(\sigma,y)\leq \psi(\Bar{\sigma},y) \leq \frac{\sigma}{\Bar{\sigma}}\psi(\sigma,y).
    \end{equation}
\end{lemma}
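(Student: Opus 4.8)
The plan is to reduce the two-sided estimate on $\psi(\cdot,y)$ to two elementary monotonicity facts about scalar functions of the regularization parameter, using the spectral decomposition of $\nabla^2 f(y)$ that already appears in the proofs of \Cref{lem.one-to-one correspondence} and \Cref{lem.property g}. If $\nabla f(y)=0$ then $\psi(\sigma,y)=0$ for all $\sigma>0$ and the claim is trivial, so assume $\nabla f(y)\neq 0$. Since $f$ is convex, $\nabla^2 f(y)\succeq 0$; write $\nabla^2 f(y)=V\Lambda V^\top$ with $\Lambda=\operatorname{diag}(\lambda_1,\dots,\lambda_n)$, $\lambda_i\geq 0$, $V$ orthogonal, and put $\beta_i=\nabla f(y)^\top v_i$. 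Introducing $r_y(\sigma):=\bigl\|(\nabla^2 f(y)+\sigma I)^{-1}\nabla f(y)\bigr\|=\bigl(\sum_{i=1}^n \beta_i^2/(\lambda_i+\sigma)^2\bigr)^{1/2}$, we have $\psi(\sigma,y)=r_y(\sigma)/\sigma$, so the desired bounds are equivalent, after dividing by $\psi(\sigma,y)$ and cancelling the common factor $\sigma/\bar\sigma$, to the single chain
$$
\frac{\sigma}{\bar\sigma}\ \leq\ \frac{r_y(\bar\sigma)}{r_y(\sigma)}\ \leq\ 1 .
$$

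First I would establish the right-hand inequality: each summand $\beta_i^2/(\lambda_i+\sigma)^2$ is non-increasing in $\sigma$ on $(0,\infty)$ because $\lambda_i\geq 0$, hence $r_y$ is non-increasing, and $\sigma\leq\bar\sigma$ gives $r_y(\bar\sigma)\leq r_y(\sigma)$. Then I would establish the left-hand inequality, which amounts to showing that $\sigma\mapsto\sigma\, r_y(\sigma)$ is non-decreasing: writing $\sigma^2 r_y(\sigma)^2=\sum_{i=1}^n\beta_i^2\bigl(1+\lambda_i/\sigma\bigr)^{-2}$ and noting that, for $\lambda_i\geq 0$, the factor $(1+\lambda_i/\sigma)^{-2}$ is non-decreasing in $\sigma$, the whole sum is non-decreasing, whence $\sigma\, r_y(\sigma)\leq\bar\sigma\, r_y(\bar\sigma)$, i.e.\ $r_y(\bar\sigma)/r_y(\sigma)\geq\sigma/\bar\sigma$. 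Combining the two displayed inequalities and multiplying back by $\sigma/\bar\sigma$ yields exactly \eqref{eq.psi property wrt sigma}.

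An equivalent operator-level route, which avoids the diagonalization, is also available: the matrices $P_\sigma:=\sigma(\nabla^2 f(y)+\sigma I)^{-1}$ pairwise commute and satisfy $0\preceq P_\sigma\preceq P_{\bar\sigma}\preceq I$ whenever $0<\sigma\leq\bar\sigma$ (spectrally, $t\mapsto t/(\lambda+t)$ is increasing in $t$ and bounded by $1$), and for commuting positive semidefinite operators $A\preceq B$ one has $B^2-A^2=(B-A)(B+A)\succeq 0$, hence $\|Aw\|\leq\|Bw\|$ for every $w$; applying this with $w=\nabla f(y)$ gives $\sigma\, r_y(\sigma)\leq\bar\sigma\, r_y(\bar\sigma)$, while the antitonicity of the inverse, $(\nabla^2 f(y)+\bar\sigma I)^{-1}\preceq(\nabla^2 f(y)+\sigma I)^{-1}$, gives $r_y(\bar\sigma)\leq r_y(\sigma)$. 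There is no substantive obstacle in this proof; the only points demanding any care are the trivial case $\nabla f(y)=0$ and the use of $\lambda_i\geq 0$ (equivalently $\nabla^2 f(y)\succeq 0$), which is precisely what orients both monotonicities in the required direction.
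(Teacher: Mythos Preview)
Your proof is correct and follows essentially the same approach as the paper: the paper observes that $\sigma\psi(\sigma,y)=\|(\nabla^2 f(y)+\sigma I)^{-1}\nabla f(y)\|$ is nonincreasing in $\sigma$ and $\sigma^2\psi(\sigma,y)=\|(\nabla^2 f(y)/\sigma+I)^{-1}\nabla f(y)\|$ is nondecreasing, which are precisely your two monotonicities of $r_y(\sigma)$ and $\sigma\,r_y(\sigma)$ written at the operator level rather than through the eigendecomposition. Your alternative operator-level route is in fact slightly more careful than the paper's, since you make explicit the commutativity needed to pass from $A\preceq B$ to $\|Aw\|\le\|Bw\|$.
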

\begin{proof}
    Note that
    \begin{equation*}
        \sigma \psi(\sigma,y) = \left \| \left(\nabla^2 f(y)+\sigma I \right)^{-1}\nabla f(y)\right \|.
    \end{equation*}
    Since $\nabla^2 f(y)+\Bar{\sigma} I\succeq \nabla^2 f(y)+\sigma I \succ 0$, we have
    \begin{equation*}
        \sigma \psi(\sigma,y) \geq \Bar{\sigma} \psi(\Bar{\sigma},y),
    \end{equation*}
    which is the second argument. Similarly,
    \begin{equation*}
        \sigma^2 \psi(\sigma,y) = \left \| \left(\frac{\nabla^2 f(y)}{\sigma}+ I \right)^{-1}\nabla f(y)\right \|,
    \end{equation*}
    since $\frac{1}{\sigma}\nabla^2 f(y)+I\succeq \frac{1}{\Bar{\sigma}}\nabla^2 f(y)+I\succ I$, we have $\sigma^2 \psi(\sigma,y)\leq \Bar{\sigma}^2 \psi(\Bar{\sigma},y)$, which finished the proof.
\end{proof}
\begin{lemma}
    \label{lem.psi property wrt y}
    Suppose Assumption~\ref{assm.lipschitz} holds, for any $y,\Bar{y} \in \mathbb{R}^n$ and $\sigma>0$, then
    \begin{equation}
        \label{eq.psi property wrt y}
        \left \vert \psi(\sigma,y)-\psi(\sigma,\Bar{y}) \right\vert \leq \frac{1}{\sigma}\|y-\Bar{y}\| +\frac{M}{\sigma^2}\|y-\Bar{y}\|^2 +\frac{2M}{\sigma}\|y-\Bar{y}\|\delta,
    \end{equation}
    where $\delta:=\min\{\psi(\sigma,\Bar{y}),\psi(\sigma,y)\}$. Further, we have
    \begin{equation}
        \label{eq.psi property wrt y advanced}
        \psi(\sigma,y) \leq \frac{1}{\sigma}\|y-\Bar{y}\| +\frac{M}{\sigma^2}\|y-\Bar{y}\|^2 +\left(\frac{2M}{\sigma}\|y-\Bar{y}\|+1\right)\psi(\sigma,\Bar{y}).
    \end{equation}
\end{lemma}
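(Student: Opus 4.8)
\textbf{Proof proposal for Lemma~\ref{lem.psi property wrt y}.}
The plan is to control the variation of $\psi(\sigma,\cdot)$ by writing the difference of the two resolvent-applied-to-gradient vectors and then bounding it using the Lipschitz continuity of $\nabla^2 f$ together with a first-order expansion of $\nabla f$. First I would fix $\sigma>0$ and abbreviate $H=\nabla^2 f(y)$, $\bar H=\nabla^2 f(\bar y)$, $g=\nabla f(y)$, $\bar g=\nabla f(\bar y)$, and $d=-(H+\sigma I)^{-1}g$, $\bar d=-(\bar H+\sigma I)^{-1}\bar g$, so that $\sigma\psi(\sigma,y)=\|d\|$ and $\sigma\psi(\sigma,\bar y)=\|\bar d\|$. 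By the reverse triangle inequality it suffices to bound $\|d-\bar d\|$. The standard trick is to left-multiply the defining equation $(H+\sigma I)d=-g$ by nothing and instead subtract: $(\bar H+\sigma I)(d-\bar d)=-(g-\bar g)-(H-\bar H)d$, hence
\begin{equation*}
    d-\bar d = -(\bar H+\sigma I)^{-1}\big[(g-\bar g)+(H-\bar H)d\big].
\end{equation*}
Since $\bar H\succeq 0$ we have $\|(\bar H+\sigma I)^{-1}\|\le 1/\sigma$, so $\|d-\bar d\|\le \tfrac1\sigma\big(\|g-\bar g-\bar H(y-\bar y)\|+\|\bar H(y-\bar y)\|+\|H-\bar H\|\,\|d\|\big)$ — wait, more carefully I would not split $g-\bar g$ that way; instead I use $\|g-\bar g\|\le$ (first-order bound) only if a Hessian term is extracted. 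Let me reorganize: keep $g-\bar g$ together and bound it crudely is not enough because it is not small relative to $\|y-\bar y\|$ with a clean constant. The cleaner route is to use \eqref{eq.first-order exp} from Lemma~\ref{lem.lipschitz}, $\|g-\bar g-\bar H(y-\bar y)\|\le \tfrac M2\|y-\bar y\|^2$, and absorb the linear term $\bar H(y-\bar y)$; but $\|\bar H\|$ is not assumed bounded here. So the intended bound must come purely from the resolvent structure: rewrite $(g-\bar g)$ inside the bracket using that $\bar d=-(\bar H+\sigma I)^{-1}\bar g$, i.e. $\bar g=-(\bar H+\sigma I)\bar d$, and similarly substitute to make every term carry either a factor $\|y-\bar y\|$ times $\|d\|$ or $\|\bar d\|$, or $\|y-\bar y\|^2$, or $\|y-\bar y\|$ alone. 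Concretely, from $g=-(H+\sigma I)d$ and $\bar g=-(\bar H+\sigma I)\bar d$,
\begin{equation*}
    g-\bar g = -(H+\sigma I)d+(\bar H+\sigma I)\bar d = -(H-\bar H)d-(\bar H+\sigma I)(d-\bar d),
\end{equation*}
which is circular; so the honest route back is the first display, and I bound $\|g-\bar g\|$ by combining \eqref{eq.first-order exp} with $\|\bar H(y-\bar y)\|\le \|\bar d\|\cdot(\text{something})$ is not available either.

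Given the form of the claimed bound — a term $\tfrac1\sigma\|y-\bar y\|$, a term $\tfrac{M}{\sigma^2}\|y-\bar y\|^2$, and a term $\tfrac{2M}{\sigma}\|y-\bar y\|\,r$ where $r$ plays the role of the trust-region radius (so $r\ge \max\{\|d\|,\|\bar d\|\}=\sigma\delta$) — I infer the correct decomposition is: $\|d-\bar d\|\le \tfrac1\sigma\|g-\bar g\|+\tfrac1\sigma\|H-\bar H\|\,\|d\|$, then further split $\|g-\bar g\|\le \|g-\bar g-\bar H(y-\bar y)\|+\|(\bar H-H)(y-\bar y)\|+\|H(y-\bar y)\|$ is still stuck on $\|H\|$. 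The resolution that matches the constants is to instead bound $\|g-\bar g\|$ via the integral form along the segment $y_t=\bar y+t(y-\bar y)$: $g-\bar g=\int_0^1 \nabla^2 f(y_t)(y-\bar y)\,dt$, and write $\nabla^2 f(y_t)=\nabla^2 f(y)+(\nabla^2 f(y_t)-\nabla^2 f(y))$; then $\|g-\bar g\|\le \|H(y-\bar y)\|+\tfrac M2\|y-\bar y\|^2$, and now $H(y-\bar y)=-(H+\sigma I)(H+\sigma I)^{-1}H(y-\bar y)$ still does not help. So the genuine source of the $\tfrac1\sigma\|y-\bar y\|$ term must be the operator-norm bound $\|(\bar H+\sigma I)^{-1}H(y-\bar y)\|\le \|(\bar H+\sigma I)^{-1}(\bar H+\sigma I)(y-\bar y)\|=\|y-\bar y\|$ after adding and subtracting: $(\bar H+\sigma I)^{-1}H(y-\bar y)=(y-\bar y)-(\bar H+\sigma I)^{-1}\big((\bar H-H)(y-\bar y)+\sigma(y-\bar y)\big)$ — again circular on the $\sigma(y-\bar y)$ piece.

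Stepping back, the clean and surely-intended argument is: $d-\bar d=-(\bar H+\sigma I)^{-1}\big[(g-\bar g)+(H-\bar H)d\big]$, bound $\|(\bar H+\sigma I)^{-1}(g-\bar g)\|$ by noticing $g-\bar g=(\bar H+\sigma I)(y-\bar y)+\big[(g-\bar g)-(\bar H+\sigma I)(y-\bar y)\big]$, so that $(\bar H+\sigma I)^{-1}(g-\bar g)=(y-\bar y)+(\bar H+\sigma I)^{-1}\big[(g-\bar g-\bar H(y-\bar y))-\sigma(y-\bar y)\big]$, whose norm is $\le \|y-\bar y\|+\tfrac1\sigma\cdot\tfrac M2\|y-\bar y\|^2+\|y-\bar y\|$; this double-counts the linear term, giving $2\|y-\bar y\|$, which is too large. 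The actual correct move is simpler: do not extract $\bar H(y-\bar y)$ at all — just use $\|g-\bar g\|\le$ is avoided by instead keeping $(g-\bar g)$ multiplied by the resolvent and bounding $\|(\bar H+\sigma I)^{-1}(g-\bar g)\|$ directly via: along the segment, $(\bar H+\sigma I)^{-1}(g-\bar g)=\int_0^1(\bar H+\sigma I)^{-1}\nabla^2 f(y_t)(y-\bar y)\,dt$, and $(\bar H+\sigma I)^{-1}\nabla^2 f(y_t)=(\bar H+\sigma I)^{-1}\big(\nabla^2 f(\bar y)+(\nabla^2 f(y_t)-\nabla^2 f(\bar y))\big)$ with $\|(\bar H+\sigma I)^{-1}\bar H\|\le 1$ and $\|(\bar H+\sigma I)^{-1}(\nabla^2 f(y_t)-\nabla^2 f(\bar y))\|\le \tfrac1\sigma\cdot Mt\|y-\bar y\|$, integrating to $\le \|y-\bar y\|+\tfrac{M}{2\sigma}\|y-\bar y\|^2$. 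Then the remaining term $\|(\bar H+\sigma I)^{-1}(H-\bar H)d\|\le \tfrac1\sigma M\|y-\bar y\|\,\|d\|\le \tfrac M\sigma\|y-\bar y\|\,r$ since $\|d\|=\sigma\psi(\sigma,y)\le \sigma\cdot(r/\sigma)=r$ when the trust-region constraint of radius $r$ is what $r$ denotes; combining and dividing by $\sigma$ yields \eqref{eq.psi property wrt y} with the stated three terms (after symmetrizing in $y,\bar y$ to get the absolute value, using $\delta$ as the smaller of the two $\psi$ values). Finally, \eqref{eq.psi property wrt y advanced} follows by the triangle inequality $\psi(\sigma,y)\le |\psi(\sigma,y)-\psi(\sigma,\bar y)|+\psi(\sigma,\bar y)$ and plugging in the just-proved bound, where the $\tfrac{2M}{\sigma}\|y-\bar y\|\,r$ term is replaced by $\tfrac{2M}{\sigma}\|y-\bar y\|\,\psi(\sigma,\bar y)\cdot\sigma$-type reasoning — more precisely, in \eqref{eq.psi property wrt y advanced} the radius $r$ has been taken to equal $\sigma\psi(\sigma,\bar y)$ (the natural choice when $\bar y$ is the reference point), turning $\tfrac{2M}{\sigma}\|y-\bar y\|\,r$ into $2M\|y-\bar y\|\psi(\sigma,\bar y)$... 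I would double-check this normalization against how the lemma is invoked in Subroutine~\ref{alg.bisection ms}, where $r=\tfrac1M\sigma$; with that choice the term becomes $\tfrac{2}{\sigma}\|y-\bar y\|$, and \eqref{eq.psi property wrt y advanced} as written uses a slightly different bookkeeping, so the last step is really: bound $\|d\|\le \|d-\bar d\|+\|\bar d\|=\|d-\bar d\|+\sigma\psi(\sigma,\bar y)$, feed this back into the $\tfrac M\sigma\|y-\bar y\|\,\|d\|$ term, solve the resulting inequality for $\|d\|$ (the self-referential term has coefficient $\tfrac M\sigma\|y-\bar y\|<1$ for $\|y-\bar y\|$ small, or just keep it on both sides and rearrange), and divide by $\sigma$.

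The main obstacle I anticipate is exactly this bookkeeping of the linear-in-$\|y-\bar y\|$ contribution: one must route it through $(\bar H+\sigma I)^{-1}\nabla^2 f(y_t)$ rather than through a crude $\|g-\bar g\|\le L\|y-\bar y\|$ bound (which would need a global Lipschitz constant on $\nabla f$ that is not assumed), and one must be careful not to double-count it when both adding/subtracting $\bar H(y-\bar y)$ and using the segment integral. Getting the constant in front of $\tfrac1\sigma\|y-\bar y\|$ to be exactly $1$ (not $2$) forces the integral-representation route. Everything else — the $\tfrac{M}{\sigma^2}\|y-\bar y\|^2$ term from Hessian-Lipschitzness along the segment, the $\tfrac{2M}{\sigma}\|y-\bar y\|r$ term from $\|(H-\bar H)d\|$ with $\|d\|\le r$, and the symmetrization to produce the absolute value with $\delta=\min\{\psi(\sigma,\bar y),\psi(\sigma,y)\}$ — is routine once the decomposition is set up correctly.
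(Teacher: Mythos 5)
Your final argument is correct and genuinely different from the paper's. You ultimately land on the resolvent-difference identity $d-\bar d=-(\bar H+\sigma I)^{-1}\big[(g-\bar g)+(H-\bar H)d\big]$ together with the segment integral $(\bar H+\sigma I)^{-1}(g-\bar g)=\int_0^1(\bar H+\sigma I)^{-1}\nabla^2 f(y_t)(y-\bar y)\,dt$, splitting $\nabla^2 f(y_t)=\bar H+\big(\nabla^2 f(y_t)-\bar H\big)$ and using $\|(\bar H+\sigma I)^{-1}\bar H\|\le 1$; this yields $\|d-\bar d\|\le\|y-\bar y\|+\frac{M}{2\sigma}\|y-\bar y\|^2+M\|y-\bar y\|\psi(\sigma,y)$, hence after dividing by $\sigma$ and symmetrizing a bound with the \emph{sharper} constants $\frac{M}{2\sigma^2}$ and $\frac{M}{\sigma}$ in place of $\frac{M}{\sigma^2}$ and $\frac{2M}{\sigma}$. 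The paper's proof instead introduces the auxiliary vector $u=\nabla f(\bar y)+\nabla^2 f(\bar y)(x-\bar y)$ and exploits $\langle x-\bar x,\,u-\bar v\rangle=\langle x-\bar x,\,\nabla^2 f(\bar y)(x-\bar x)\rangle\ge 0$ to deduce $\|u-\bar v\|\le\|\sigma(y-\bar y)+u-v\|$, which combined with the triangle inequality gives $\|v-\bar v\|\le\sigma\|y-\bar y\|+2\|u-v\|$; the factor $2$ on $\|u-v\|$ is exactly what inflates both the $\|y-\bar y\|^2$ and $\|y-\bar y\|\cdot\psi$ coefficients relative to your route. Your integral route is arguably cleaner and gives a tighter constant, at the cost of invoking the integral form of the mean value theorem rather than a one-line inner-product positivity argument.

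Two things to fix. First, this needs to be written as a proof: the stream of false starts (the ``circular'' attempts, the double-counted $2\|y-\bar y\|$, the dead end on $\|H\|$) must be excised, not narrated; keep only the final decomposition. Second, your treatment of \eqref{eq.psi property wrt y advanced} goes astray: there is no need to feed $\|d\|\le\|d-\bar d\|+\|\bar d\|$ back into the estimate and ``solve for $\|d\|$.'' Once you have \eqref{eq.psi property wrt y} with $\psi(\sigma,\bar y)$ on the right-hand side — which you obtain by running the symmetrized version of your argument, as you correctly note — the advanced inequality is literally $\psi(\sigma,y)\le|\psi(\sigma,y)-\psi(\sigma,\bar y)|+\psi(\sigma,\bar y)$. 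Finally, on your puzzlement about $r$: the $r$ in the displayed bound \eqref{eq.psi property wrt y} is evidently a typo for the defined $\delta=\min\{\psi(\sigma,\bar y),\psi(\sigma,y)\}$. There is no trust-region radius in this lemma; the paper's own proof makes this clear by deriving the bound separately with $\psi(\sigma,y)$ and with $\psi(\sigma,\bar y)$ in the last term, hence with the minimum of the two.
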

\begin{proof}
    We denote
    \begin{gather*}
        x = \arg\min_{x\in \mathbb{R}^n} \ \nabla f(y)^T(x-y) +\frac{1}{2}(x-y)\nabla ^2 f(y)(x-y) +\frac{\sigma}{2}\|x-y\|^2,\\
        \Bar{x} = \arg\min_{x\in \mathbb{R}^n} \ \nabla f(\Bar{y})^T(x-\Bar{y}) +\frac{1}{2}(x-\Bar{y})\nabla ^2 f(\Bar{y})(x-\Bar{y}) +\frac{\sigma}{2}\|x-\Bar{y}\|^2,
    \end{gather*}
    the optimality conditions of the above are
    \begin{equation*}
        \nabla f(y) + \nabla ^2 f(y)(x-y) +\sigma(x-y)=0,\quad
        \nabla f(\Bar{y})+\nabla ^2 f(\Bar{y})(\Bar{x}-\Bar{y})+\sigma(\Bar{x}-\Bar{y})=0.
    \end{equation*}
    Denote
    \begin{equation*}
        v = \nabla f(y) + \nabla ^2 f(y)(x-y),\quad
        \Bar{v} =  \nabla f(\Bar{y})+\nabla ^2 f(\Bar{y})(\Bar{x}-\Bar{y}),\quad
        u = \nabla f(\Bar{y})+\nabla ^2 f(\Bar{y})(x-\Bar{y}).
    \end{equation*}
    Plugging them into the optimality conditions gives
    \begin{gather}
        \label{eq.def v}
        v+\sigma(x-y)=0,\\
        \label{eq.def bar v}
        \Bar{v}+\sigma(\Bar{x}-\Bar{y})=0,\\
        \label{eq.target distance initial}
        \left \vert \psi(\sigma,y)-\psi(\sigma,\Bar{y}) \right\vert= \left \vert\frac{1}{\sigma^2}\|v\| -\frac{1}{\sigma^2}\|\Bar{v}\|\right\vert\leq \frac{1}{\sigma^2}\|v-\Bar{v}\|.
    \end{gather}
    By \eqref{eq.def v}, we have
    \(
    v=-\sigma(x-y).
    \)
    Moreover, from the optimality condition,
    \(
    x-y=-(\nabla^2 f(y)+\sigma I)^{-1}\nabla f(y).
    \)
    Therefore, by the definition of \(\psi(\sigma,y)\),
    \(
    \sigma\psi(\sigma,y)
    =
    \|(\nabla^2 f(y)+\sigma I)^{-1}\nabla f(y)\|
    =
    \|x-y\|.
    \)
    Hence
    \(
    \|v\|
    =
    \sigma\|x-y\|
    =
    \sigma^2\psi(\sigma,y),
    \)
    or equivalently,
    \(
    \psi(\sigma,y)=\frac{\|v\|}{\sigma^2}.
    \)
    Similarly, \(\psi(\sigma,\bar y)=\frac{\|\bar v\|}{\sigma^2}. \)
    Consequently, we have \eqref{eq.target distance initial}.

    Subtracting \eqref{eq.def bar v} from \eqref{eq.def v}, we have
    \begin{equation*}
        \sigma(x-\Bar{x})+v -\Bar{v} = \sigma(y-\Bar{y}).
    \end{equation*}
    Plus both sides by $u$, we have
    \begin{equation*}
        \sigma(x-\Bar{x})+u-\Bar{v} = \sigma(y-\Bar{y})+u-v.
    \end{equation*}
    It is easy to show that
    \begin{equation*}
        \langle x-\Bar{x},u-\Bar{v}\rangle = \langle x-\Bar{x},\nabla ^2 f(\Bar{y})(x-\Bar{x})\rangle\geq 0,
    \end{equation*}
    thus from triangle inequality, we have
    \begin{equation*}
        \|u-\Bar{v}\| \leq \|\sigma(x-\Bar{x})+u-\Bar{v}\| \leq \sigma \|y-\Bar{y}\|+\|u-v\|,
    \end{equation*}
    hence
    \begin{equation}\label{eq.bound psi 1}
        \|v-\Bar{v}\| \leq \sigma\|y-\Bar{y}\| +2\|u-v\|.
    \end{equation}
    Now we bound $\|u-v\|$,
    \begin{equation}\label{eq.bound psi 2}
        \begin{aligned}
            \|u-v\| & = \|\nabla f(\Bar{y})+\nabla ^2 f(\Bar{y})(x-\Bar{y})-(\nabla f(y) + \nabla ^2 f(y)(x-y))\|                              \\
                    & = \|(\nabla f(\Bar{y})+\nabla^2 f(\Bar{y})(y-\Bar{y})-\nabla f(y))+(\nabla^2 f(\Bar{y})(x-y)-\nabla^2 f(y)(x-y))\|       \\
                    & \leq \|(\nabla f(\Bar{y})+\nabla^2 f(\Bar{y})(y-\Bar{y})-\nabla f(y))\| +\|(\nabla^2 f(\Bar{y})-\nabla^2 f(y))(x-y)\|    \\
                    & \leq \frac{M}{2}\|y-\Bar{y}\|^2+M\|y-\Bar{y}\|\|x-y\|  =\frac{M}{2}\|y-\Bar{y}\|^2+\sigma M \|y-\Bar{y}\|\psi(\sigma,y).
        \end{aligned}
    \end{equation}
    Plugging \eqref{eq.bound psi 1} and \eqref{eq.bound psi 2} into \eqref{eq.target distance initial}, we have
    \begin{equation*}
        \left \vert \psi(\sigma,y)-\psi(\sigma,\Bar{y}) \right\vert \leq \frac{1}{\sigma}\|y-\Bar{y}\| +\frac{M}{\sigma^2}\|y-\Bar{y}\|^2 +\frac{2M}{\sigma}\|y-\Bar{y}\|\psi(\sigma,y).
    \end{equation*}
    Similarly, we can prove
    \begin{equation*}
        \left \vert \psi(\sigma,y)-\psi(\sigma,\Bar{y}) \right\vert \leq \frac{1}{\sigma}\|y-\Bar{y}\| +\frac{M}{\sigma^2}\|y-\Bar{y}\|^2 +\frac{2M}{\sigma}\|y-\Bar{y}\|\psi(\sigma,\Bar{y}).
    \end{equation*}
    Hence we have proved \eqref{eq.psi property wrt y}, by applying triangle inequality, we have \eqref{eq.psi property wrt y advanced}.
\end{proof}
\subsection*{Proof to \Cref{lem.reduce line search}}
\begin{proof}
    If \eqref{eq.bracket small lambda} holds, we have
    \begin{equation*}
        \|d\| = \|(\nabla^2 f(y(\sigma))+\sigma I)^{-1}\nabla f(y(\sigma))\|<\frac{\eta}{M}\sigma,
    \end{equation*}
    which is $\psi(\sigma,y(\sigma))<\frac{\eta}{M}$.
    Else if \eqref{eq.bracket big lambda} holds, we have
    \begin{equation*}
        \|d\| = \|(\nabla^2 f(y(\sigma))+\sigma I+\lambda I)^{-1}\nabla f(y(\sigma))\|=\frac{1}{M}\sigma.
    \end{equation*}
    By \eqref{eq.psi property wrt sigma}, we have
    \begin{equation*}
        \|(\nabla^2 f(y(\sigma))+\sigma I)^{-1}\nabla f(y(\sigma))\|> \|(\nabla^2 f(y(\sigma))+\sigma I+\lambda I)^{-1}\nabla f(y(\sigma))\|=\frac{1}{M}\sigma,
    \end{equation*}
    which means $\psi (\sigma,y(\sigma))>\frac{1}{M}$.
\end{proof}

\subsection*{Proof to \Cref{lem.complexity bisection}}
In fact, we prove a more detailed version of \Cref{lem.complexity bisection} here, which is \cref{coro.bisection bound} here. First, we introduce the following property of $y(\cdot)$, since it is an auxiliary lemma, and the proof is almost the same as in \citet{monteiro2013accelerated}.
\begin{lemma}[Lemma 7.13, \citet{monteiro2013accelerated}]
    \label{lem.curve condition}
    Suppose Assumption~\ref{assm.solvable} holds, and there exists $M_0>0$ such that
    \begin{equation}\label{eq.bounded assm point}
        \|x-x^*\|\leq M_0, \ \|v-x^*\|\leq M_0,
    \end{equation}
    then the curve $y(\cdot)$ satisfies
    \begin{equation}
        \label{eq.curve condition}
        \|y(s)-y(t)\| \leq \frac{2M_0}{t}(s-t), \ \forall s\geq t >0.
    \end{equation}
\end{lemma}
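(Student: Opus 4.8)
The plan is to reduce the vector statement to an elementary one-variable estimate. First I would rewrite the curve in affine form: since $a_k(\sigma)$ is the positive root of $\sigma a^2-a-A_k=0$, one has the identity $\sigma a_k(\sigma)^2=A_k+a_k(\sigma)$, and hence
\[
y_k(\sigma)=(1-\tau(\sigma))\,x_k+\tau(\sigma)\,v_k,\qquad
\tau(\sigma):=\frac{a_k(\sigma)}{A_k+a_k(\sigma)}=\frac{1}{\sigma a_k(\sigma)}=\frac{2}{1+\sqrt{1+4A_k\sigma}}.
\]
(When $A_k=0$, i.e.\ $k=0$, $\tau\equiv 1$ and $y_0(\cdot)\equiv v_0$ is constant, so \eqref{eq.curve condition} is trivial; assume $A_k>0$.) Consequently $y_k(s)-y_k(t)=(\tau(s)-\tau(t))(v_k-x_k)$, so that
\[
\|y_k(s)-y_k(t)\|=|\tau(s)-\tau(t)|\cdot\|v_k-x_k\|\le 2M_0\,|\tau(s)-\tau(t)|,
\]
using $\|v_k-x_k\|\le\|v_k-x^*\|+\|x^*-x_k\|\le 2M_0$ from the hypothesis \eqref{eq.bounded assm point}. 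Everything thus reduces to bounding $|\tau(s)-\tau(t)|$ for the explicit scalar function $\tau$.

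Next I would control $\tau$ through its derivative. Writing $b(\sigma):=\sqrt{1+4A_k\sigma}\ge 1$, we have $b'(\sigma)=2A_k/b(\sigma)$ and $\tau'(\sigma)=-2b'(\sigma)/(1+b(\sigma))^2=-4A_k\big/\big(b(\sigma)(1+b(\sigma))^2\big)$. Since $4A_k\sigma\le b(\sigma)^2$ we get $4A_k\le b(\sigma)^2/\sigma$, and since $b/(1+b)^2\le 1/4$ for all $b\ge 1$ (this function is decreasing on $[1,\infty)$ with value $1/4$ at $b=1$),
\[
|\tau'(\sigma)|\le\frac{b(\sigma)^2/\sigma}{b(\sigma)(1+b(\sigma))^2}=\frac1\sigma\cdot\frac{b(\sigma)}{(1+b(\sigma))^2}\le\frac{1}{4\sigma}.
\]
Integrating over $[t,s]$,
\[
\|y_k(s)-y_k(t)\|\le 2M_0\int_t^s|\tau'(\sigma)|\,d\sigma\le\frac{M_0}{2}\ln\frac st\le\frac{M_0}{2}\cdot\frac{s-t}{t}\le\frac{M_0}{t}(s-t),
\]
where the penultimate step uses $\ln(1+u)\le u$ with $u=(s-t)/t\ge 0$. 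This gives \eqref{eq.curve condition}, in fact with a factor $1/2$ to spare.

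This is essentially the adaptation of Lemma~7.13 of \citet{monteiro2013accelerated} to our parametrization of $y_k(\cdot)$. The only genuinely delicate point is the opening identity $\sigma a_k(\sigma)^2=A_k+a_k(\sigma)$, which collapses the two-term convex combination into the single closed-form coefficient $\tau(\sigma)=2/(1+\sqrt{1+4A_k\sigma})$; once that is in place, the remaining estimates are routine one-variable inequalities, so I do not expect a substantive obstacle. A self-contained alternative to the derivative route is to rationalize $\tau(t)-\tau(s)$ twice, obtaining $\tau(t)-\tau(s)=8A_k(s-t)\big/\big[(1+b(t))(1+b(s))(b(s)+b(t))\big]$, then bound the denominator below by $2b(t)(1+b(t))^2$ using $b(s)\ge b(t)\ge 1$, and finish with the same $b/(1+b)^2\le 1/4$ estimate together with $A_k\le b(t)^2/(4t)$.
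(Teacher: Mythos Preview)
Your proof is correct and follows essentially the same approach as the paper: both write $y(\sigma)=x+\tau(\sigma)(v-x)$ with $\tau(\sigma)=2/(1+\sqrt{1+4A\sigma})$, bound $\|v-x\|\le 2M_0$, and control $|\tau(s)-\tau(t)|$ via the derivative bound $|\tau'(\sigma)|=O(1/\sigma)$. The only cosmetic differences are that the paper uses the mean value theorem with the cruder bound $|\tau'(\sigma)|\le 1/(2\sigma)$ while you integrate with the slightly sharper $|\tau'(\sigma)|\le 1/(4\sigma)$ and then apply $\ln(1+u)\le u$; this yields a constant twice as good but is otherwise the same argument.
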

\begin{proof}
    We have
    \begin{equation*}
        y(\sigma) = \frac{A}{A+a(\sigma)}x+\frac{a(\sigma)}{A+a(\sigma)}v = x + \tau (\sigma) (v-x),
    \end{equation*}
    where $\tau(\sigma) = \frac{a(\sigma)}{A+a(\sigma)}.$ For any $s\geq t>0$,
    \begin{equation*}
        \|y(s)-y(t)\| = \left|\tau(s)-\tau(t)\right| \|v-x\|,
    \end{equation*}
    by the mean value theorem, we have
    \begin{equation*}
        \|y(s)-y(t)\| = \left|\tau'(\xi)\right|(s-t) \|v-x\|\leq M_0\left|\tau'(\xi)\right|(s-t),
    \end{equation*}
    where $\xi \in \left[t,s\right]$. Note that from \eqref{eq.update ak} we have $\sigma a^2=a+A$, it leads to
    \begin{equation*}
        \tau(\sigma) = \frac{a(\sigma)}{A+a(\sigma)} = \frac{1}{\sigma a(\sigma)} = \frac{2}{1+\sqrt{1+4A\sigma}}.
    \end{equation*}
    Its derivative is
    \begin{equation*}
        \tau'(\sigma) = -\frac{4A}{\sqrt{1+4A\sigma}\left(1+\sqrt{1+4A\sigma}\right)^2},
    \end{equation*}
    therefore for all $\sigma>0$, we have
    \begin{equation*}
        \left|\tau'(\sigma)\right| \leq \frac{4A}{\left(1+\sqrt{1+4A\sigma}\right)^2}  \leq \frac{4A}{4A\sigma}  = \frac{1}{\sigma}.
    \end{equation*}
    Therefore we have
    \begin{equation*}
        \begin{aligned}
            \|y(s)-y(t)\| & \leq \left|\tau'(\xi)\right|(s-t)\|x-v\|                          \\
                          & \leq \left|\tau'(\xi)\right|(s-t)\left(\|x-x^*\|+\|v-x^*\|\right) \\
                          & \leq 2M_0\left|\tau'(\xi)\right|(s-t)                             \\
                          & \leq \frac{2M_0}{t}(s-t).
        \end{aligned}
    \end{equation*}
\end{proof}
Next, we proceed to introduce the lemma with analyzes the difference between $\psi_-$ and $\psi_+$:
\begin{lemma}
    \label{lem.bound gap bisection}
    Suppose Assumption~\ref{assm.lipschitz} and Assumption~\ref{assm.solvable} hold, and there exists $M_0>0$ such that \eqref{eq.bounded assm point} holds. Then in \textnormal{R\&B}~(\Cref{alg.bisection ms}), we have
    \begin{equation}\label{eq.bound gap bisection}
        \begin{aligned}
            \psi_- - \psi_+
            \leq\; & \left( \frac{\sigma_+}{\sigma_-} \right)^2 \Bigg[
                \frac{1}{\sigma_+} \|y(\sigma_-)-y(\sigma_+)\|
            + \frac{M}{\sigma_+^2} \|y(\sigma_-)-y(\sigma_+)\|^2                          \\
                   & \quad + \frac{2M}{\sigma_+} \|y(\sigma_-)-y(\sigma_+)\| \cdot \psi_+
                \Bigg]
            + \left[ \left( \frac{\sigma_+}{\sigma_-} \right)^2 - 1 \right] \psi_+.
        \end{aligned}
    \end{equation}
    Further, it gives
    \begin{equation}
        \label{eq.bounded gap bisection sigma}
        \psi_- - \psi_+
        \leq
        \frac{\sigma_+}{\sigma_-^2}
        \Bigg[
            \frac{2M_0}{\sigma_-}
            +
            \frac{4MM_0^2}{\sigma_-^2}
            +
            2\left(
            \frac{2MM_0}{\sigma_-}+1
            \right)\psi_+
            \Bigg]
        (\sigma_+-\sigma_-).
    \end{equation}
\end{lemma}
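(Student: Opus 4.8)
The plan is to chain together the two perturbation estimates established earlier, namely \Cref{lem.psi property wrt sigma} (monotone control of $\psi$ in $\sigma$) and \Cref{lem.psi property wrt y} (Lipschitz-type control of $\psi$ in $y$), and then feed in the explicit curve estimate \Cref{lem.curve condition}. First I would write $\psi_- = \psi(\sigma_-, y(\sigma_-))$ and relate it to $\psi(\sigma_-, y(\sigma_+))$: since $\sigma_- \le \sigma_+$, applying \Cref{lem.psi property wrt sigma} (with the roles $\sigma = \sigma_-$, $\bar\sigma = \sigma_+$, but read in the other direction) gives $\psi(\sigma_-, y) \le \left(\frac{\sigma_+}{\sigma_-}\right)^2 \psi(\sigma_+, y)$ for any fixed $y$. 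So $\psi_- \le \left(\frac{\sigma_+}{\sigma_-}\right)^2 \psi(\sigma_+, y(\sigma_-))$. Next I would bound $\psi(\sigma_+, y(\sigma_-))$ in terms of $\psi(\sigma_+, y(\sigma_+)) = \psi_+$ by invoking \eqref{eq.psi property wrt y advanced} with $\sigma = \sigma_+$, $y = y(\sigma_-)$, $\bar y = y(\sigma_+)$:
\[
\psi(\sigma_+, y(\sigma_-)) \le \frac{1}{\sigma_+}\|y(\sigma_-)-y(\sigma_+)\| + \frac{M}{\sigma_+^2}\|y(\sigma_-)-y(\sigma_+)\|^2 + \left(\frac{2M}{\sigma_+}\|y(\sigma_-)-y(\sigma_+)\| + 1\right)\psi_+.
\]
Multiplying through by $\left(\frac{\sigma_+}{\sigma_-}\right)^2$ and then subtracting $\psi_+$ from both sides — after splitting the $\left(\frac{\sigma_+}{\sigma_-}\right)^2 \cdot 1 \cdot \psi_+$ term as $\psi_+ + \left[\left(\frac{\sigma_+}{\sigma_-}\right)^2 - 1\right]\psi_+$ — yields exactly \eqref{eq.bound gap bisection}. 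This first part is essentially bookkeeping once the direction of each inequality is pinned down.

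For the second, sharper bound \eqref{eq.bounded gap bisection sigma}, I would substitute the curve estimate $\|y(\sigma_-)-y(\sigma_+)\| \le \frac{M_0}{\sigma_-}(\sigma_+ - \sigma_-)$ from \Cref{lem.curve condition} (applied with $s = \sigma_+$, $t = \sigma_-$, using that \eqref{eq.bounded assm point} holds by hypothesis) into \eqref{eq.bound gap bisection}. The quadratic term $\|y(\sigma_-)-y(\sigma_+)\|^2$ becomes $\frac{M_0^2}{\sigma_-^2}(\sigma_+-\sigma_-)^2$; I would then use the crude bound $(\sigma_+-\sigma_-) \le \sigma_+$ (or, if a cleaner form is wanted, absorb one factor of $(\sigma_+-\sigma_-)/\sigma_- \le \sigma_+/\sigma_-$) to reduce everything to a single overall factor of $(\sigma_+ - \sigma_-)$ times a bracketed expression. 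The residual term $\left[\left(\frac{\sigma_+}{\sigma_-}\right)^2 - 1\right]\psi_+ = \frac{(\sigma_+-\sigma_-)(\sigma_++\sigma_-)}{\sigma_-^2}\psi_+$ already carries a factor $(\sigma_+-\sigma_-)$, and I would bound $\sigma_+ + \sigma_- \le 2\sigma_+$ to merge it into the same shape. After collecting the prefactor $\frac{\sigma_+}{\sigma_-^2}$ and the bracket $\left[\frac{M_0}{\sigma_-} + \frac{MM_0^2}{\sigma_-^2} + 2\left(\frac{MM_0}{\sigma_-}+1\right)\psi_+\right]$, the claimed inequality \eqref{eq.bounded gap bisection sigma} follows.

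The only real subtlety — not so much an obstacle as a place to be careful — is getting the exponents on $\sigma_+/\sigma_-$ right when combining the two monotonicity directions in \Cref{lem.psi property wrt sigma}: one must track whether each use contributes a factor $\left(\frac{\sigma}{\bar\sigma}\right)$ or $\left(\frac{\sigma}{\bar\sigma}\right)^2$, and in which direction the inequality points, so that the $\left(\frac{\sigma_+}{\sigma_-}\right)^2$ prefactor in \eqref{eq.bound gap bisection} is exactly what survives. The coarsening steps in the second half ($(\sigma_+-\sigma_-)\le\sigma_+$, $\sigma_++\sigma_-\le 2\sigma_+$) are deliberately loose since \eqref{eq.bounded gap bisection sigma} is only needed up to constants for the eventual $O(\log(1/\epsilon))$ bisection count; I would not attempt to optimize them.
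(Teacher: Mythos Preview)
Your proposal is correct and follows essentially the same route as the paper: first use \Cref{lem.psi property wrt sigma} to pass from $\psi(\sigma_-,y(\sigma_-))$ to $(\sigma_+/\sigma_-)^2\psi(\sigma_+,y(\sigma_-))$, then apply \eqref{eq.psi property wrt y advanced} to reach $\psi_+$, and finally plug in \Cref{lem.curve condition} with the coarsening steps $(\sigma_+-\sigma_-)\le\sigma_+$ and $\sigma_++\sigma_-\le 2\sigma_+$ to obtain \eqref{eq.bounded gap bisection sigma}. In fact you spell out the second half more explicitly than the paper, which simply says ``plug \eqref{eq.curve condition} into \eqref{eq.bound gap bisection}.''
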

\begin{proof}
    \begin{equation*}
        \begin{aligned}
            \psi_- -\psi_+
             & = \psi(\sigma_-,y(\sigma_-))-\psi(\sigma_+,y(\sigma_+))                                               \\
             & \leq \left(\frac{\sigma_+}{\sigma_-}\right)^2 \psi(\sigma_+,y(\sigma_-)) - \psi(\sigma_+,y(\sigma_+)) \\
             & \leq \left(\frac{\sigma_+}{\sigma_-}\right)^2 \Bigg[
                \frac{1}{\sigma_+}\|y(\sigma_-)-y(\sigma_+)\|
                + \frac{M}{\sigma_+^2} \|y(\sigma_-)-y(\sigma_+)\|^2 + \left( \frac{2M}{\sigma_+} \|y(\sigma_-)-y(\sigma_+)\| + 1 \right)\psi_+
            \Bigg] - \psi_+                                                                                          \\
             & = \left(\frac{\sigma_+}{\sigma_-} \right)^2 \Bigg[
                \frac{1}{\sigma_+} \|y(\sigma_-)-y(\sigma_+)\|
                + \frac{M}{\sigma_+^2} \|y(\sigma_-)-y(\sigma_+)\|^2 + \frac{2M}{\sigma_+} \|y(\sigma_-)-y(\sigma_+)\|\psi_+
            \Bigg]                                                                                                   \\
             & \quad + \left[ \left(\frac{\sigma_+}{\sigma_-} \right)^2 - 1 \right] \psi_+.
        \end{aligned}
    \end{equation*}
    The second line is because of \Cref{lem.psi property wrt sigma}, the third line is because of \Cref{lem.psi property wrt y}. To derive \eqref{eq.bounded gap bisection sigma}, plug
    \eqref{eq.curve condition} into \eqref{eq.bound gap bisection} and use
    \[
        \sigma_+-\sigma_-\le \sigma_+,
        \qquad
        \left(\frac{\sigma_+}{\sigma_-}\right)^2-1
        =
        \frac{(\sigma_+-\sigma_-)(\sigma_++\sigma_-)}{\sigma_-^2}
        \le
        \frac{2\sigma_+}{\sigma_-^2}(\sigma_+-\sigma_-).
    \]
\end{proof}
Now we can proceed with the main proof.
\begin{lemma}
    \label{coro.bisection bound}
    Suppose Assumption~\ref{assm.lipschitz} and
    Assumption~\ref{assm.solvable} hold, and there exists \(M_0>0\) such that
    \eqref{eq.bounded assm point} holds. Then in \textnormal{R\&B}, we have
    \begin{equation}
        \label{eq.bisection-coefficient-bound}
        \frac{\sigma_+}{\sigma_-^2}
        \left[
            \frac{2M_0}{\sigma_-}
            +
            \frac{4MM_0^2}{\sigma_-^2}
            +
            2\left(
            \frac{2MM_0}{\sigma_-}+1
            \right)\psi_+
            \right] \leq
        C(G_0,M_0,M,\epsilon_g),
    \end{equation}
    where
    \begin{equation}
        \label{eq.definition C bisection}
        C(G_0,M_0,M,\epsilon_g)
        :=
        \sqrt{G_0}\Bigg(
        \frac{(1+2\theta)^3M_0}
            {\sqrt{2}M\eta^2\epsilon_g^{3/2}}
        +
        \frac{(1+2\theta)^2M_0^2}
            {\sqrt{M}\eta\epsilon_g^2}
        +
        \frac{\sqrt{2}(1+2\theta)^{3/2}M_0}
            {M\epsilon_g^{3/2}}
        +
        \frac{(1+2\theta)^2}
            {M^{3/2}\eta^{3/2}\epsilon_g}
        \Bigg).
    \end{equation}
    If the bisection procedure does not terminate, then
    \begin{equation}
        \label{eq.bound value by brackets}
        \sigma_+-\sigma_-
        >
        \frac{1-\eta}
        {M C(G_0,M_0,M,\epsilon_g)}.
    \end{equation}
    Further, the number of oracle calls during the bisection is bounded by
    \begin{equation}
        \label{eq.bisection-oracle-bound}
        \log\left(
        \frac{
                M\sqrt{\frac{MG_0}{\eta}}\,
                C(G_0,M_0,M,\epsilon_g)
            }{
                1-\eta
            }
        \right).
    \end{equation}
\end{lemma}
\begin{proof}
    Let
    \(
    \underline{\sigma}
    :=
    \sqrt{\frac{2M\epsilon_g}{1+2\theta}},
    \overline{\sigma}
    :=
    \sqrt{\frac{MG_0}{\eta}}.
    \)
    By the construction of the bracket points in \eqref{eq.bracket points},
    throughout the bisection we have
    \(
    \underline{\sigma}
    \le
    \sigma_-
    \le
    \sigma_+
    \le
    \overline{\sigma}.
    \)
    Moreover, the upper bracket satisfies
    \(
    \psi_+\le \frac{\eta}{M}.
    \)
    Using \eqref{eq.bounded gap bisection sigma}, we get
    \[
        \psi_- - \psi_+
        \le
        B_k(\sigma_+-\sigma_-),
    \]
    where
    \[
        B_k
        :=
        \frac{\sigma_+}{\sigma_-^2}
        \left[
            \frac{2M_0}{\sigma_-}
            +
            \frac{4MM_0^2}{\sigma_-^2}
            +
            2\left(
            \frac{2MM_0}{\sigma_-}+1
            \right)\psi_+
            \right].
    \]
    We now upper bound \(B_k\). First,
    \[
        \frac{2\sigma_+M_0}{\sigma_-^3}
        \le
        \frac{
            2\sqrt{\frac{MG_0}{\eta}}M_0
        }{
            \left(
            \sqrt{\frac{2M\epsilon_g}{1+2\theta}}
            \right)^3
        }  =
        \frac{
            \sqrt{G_0}(1+2\theta)^{3/2}M_0
        }{
            \sqrt{2}M\sqrt{\eta}\epsilon_g^{3/2}
        } \le
        \frac{
            \sqrt{G_0}(1+2\theta)^3M_0
        }{
            \sqrt{2}M\eta^2\epsilon_g^{3/2}
        }.
    \]
    Second,
    \[
        \frac{4\sigma_+MM_0^2}{\sigma_-^4}
        \le
        \frac{
            4\sqrt{\frac{MG_0}{\eta}}MM_0^2
        }{
            \left(
            \sqrt{\frac{2M\epsilon_g}{1+2\theta}}
            \right)^4
        } =
        \frac{
            \sqrt{G_0}(1+2\theta)^2M_0^2
        }{
            \sqrt{M}\sqrt{\eta}\epsilon_g^2
        } \le
        \frac{
            \sqrt{G_0}(1+2\theta)^2M_0^2
        }{
            \sqrt{M}\eta\epsilon_g^2
        }.
    \]
    Third, using \(\psi_+\le \eta/M\),
    \[
        \frac{4\sigma_+MM_0\psi_+}{\sigma_-^3}
        \le
        \frac{4\eta M_0\sigma_+}{\sigma_-^3} \le
        \frac{
            \sqrt{2G_0}(1+2\theta)^{3/2}M_0\sqrt{\eta}
        }{
            M\epsilon_g^{3/2}
        } \le
        \frac{
            \sqrt{2G_0}(1+2\theta)^{3/2}M_0
        }{
            M\epsilon_g^{3/2}
        }.
    \]
    Finally,
    \[
        \frac{2\sigma_+\psi_+}{\sigma_-^2}
        \le
        \frac{2\eta\sigma_+}{M\sigma_-^2} \le
        \frac{
            \sqrt{G_0}(1+2\theta)\sqrt{\eta}
        }{
            M^{3/2}\epsilon_g
        } \le
        \frac{
            \sqrt{G_0}(1+2\theta)^2
        }{
            M^{3/2}\eta^{3/2}\epsilon_g
        }.
    \]
    Combining the above four estimates gives
    \[
        B_k
        \le
        C(G_0,M_0,M,\epsilon_g),
    \]
    where \(C(G_0,M_0,M,\epsilon_g)\) is defined in
    \eqref{eq.definition C bisection}. Hence
    \[
        \psi_- - \psi_+
        \le
        C(G_0,M_0,M,\epsilon_g)(\sigma_+-\sigma_-).
    \]

    If the bisection procedure does not terminate, then by the bracket condition
    we have
    \(
    \psi_- - \psi_+
    >
    \frac{1-\eta}{M}.
    \)
    Therefore,
    \[
        \sigma_+-\sigma_-
        >
        \frac{1-\eta}
        {M C(G_0,M_0,M,\epsilon_g)}.
    \]
    This proves \eqref{eq.bound value by brackets}.

    By the mechanism of bisection, the total number of bisection steps in the
    \(k\)-th iteration, denoted by \(N_k\), satisfies
    \[
        N_k
        \le
        \log\left(
        \frac{
                \left(
                \sqrt{\frac{MG_0}{\eta}}
                -
                \sqrt{\frac{2M\epsilon_g}{1+2\theta}}
                \right)
                M C(G_0,M_0,M,\epsilon_g)
            }{
                1-\eta
            }
        \right)
        \le
        \log\left(
        \frac{
                M\sqrt{\frac{MG_0}{\eta}}\,
                C(G_0,M_0,M,\epsilon_g)
            }{
                1-\eta
            }
        \right).
    \]
    Since \(G_0\) and \(M_0\) are polynomially bounded in \(D_0\), by the
    definition of \(C(G_0,M_0,M,\epsilon_g)\), and omitting fixed algorithmic
    parameters, we conclude
    \(
    N_k
    =
    O\left(
    \log\frac{MD_0}{\epsilon_g}
    \right).
    \)
\end{proof}

\subsection*{Proof to \Cref{lem.bounded iterate}}
\begin{proof}
    First, we show that \eqref{eq.ub yk sigma} is a direct consequence of \eqref{eq.bounded iterate y}:
    \begin{equation*}
        \|\nabla f(y_k(\sigma)) - \nabla f(x^*) -\nabla^2 f(x^*)(y_k(\sigma)-x^*)\| \leq \frac{M}{2}\|y_k(\sigma)-x^*\|^2.
    \end{equation*}
    Using \eqref{eq.first-order exp}, by the triangle inequality, we have
    \begin{equation*}
        \begin{aligned}
            \|\nabla f(y_k(\sigma)) \| & \leq \|\nabla f(x^*)+\nabla^2 f(x^*)(y_k(\sigma)-x^*)\| +\frac{M}{2}\|y_k(\sigma)-x^*\|^2                                    \\
                                       & \leq \|\nabla^2 f(x^*)(y_k(\sigma)-x^*)\|+\frac{M}{2}\|y_k(\sigma)-x^*\|^2                                                   \\
                                       & \leq \left(\frac{4}{\sqrt{3\gamma}}+1\right)\|\nabla ^2 f^*\|D_0+\frac{M}{2}\left(\frac{4}{\sqrt{3\gamma}}+1\right)^2 D_0^2.
        \end{aligned}
    \end{equation*}
    Therefore, we only need to prove \eqref{eq.bounded iterate} and \eqref{eq.bounded iterate y} hold. We prove them by induction. It is trivial that they hold for $i=0$. Suppose that they hold for $i=k$. We will prove that they also hold for $i=k+1$. Note that
    \begin{equation*}
        \begin{aligned}
            \|x_{k+1}-x^*\| & \leq \|x_{k+1}-y_k\| + \|y_k-x^*\|                                            \\
                            & \leq_{(a)} \|d_k\|+ \frac{A_k}{A_{k+1}} \|x_k-x^*\| +\frac{a_k}{A_{k+1}}\|v_k-x^*\| \\
                            & \leq_{(b)} \frac{1}{A_{k+1}}\left(A_{k+1}\|d_k\| +A_k\|x_k-x^*\| +  a_k D_0\right)  \\
                            & \leq_{(c)} \frac{1}{A_{k+1}}\left (\sum_{i=0}^k A_{i+1}\|d_i\|\right)+D_0.
        \end{aligned}
    \end{equation*}
    $(a)$ is from \eqref{eq.update yk}, $(b)$ is from \eqref{eq.ms estimate advanced}, and $(c)$ is derived by iterating
    \begin{equation*}
        A_{k+1} \|x_{k+1}-x^*\|\leq A_{k+1}\|d_k\|+A_k\|x_k-x^*\|+a_k D_0,
    \end{equation*}
    which is derived by multiplying both sides of the third line by $A_{k+1}$.

    Summing up \eqref{eq.helper bounded x}, we have
    \begin{equation*}
        \frac{3\gamma}{8}\sum_{i=0}^k A_{i+1}\sigma_i \|d_i\|^2 \leq \frac{1}{2}D_0^2,
    \end{equation*}
    to bound $\sum_{i=0}^k A_{i+1}\|d_i\|$, we come to the optimization problem:
    \begin{equation*}
        \max _{\zeta \in \mathbb{R}^{k+1}_+}\left\{\sum_{i=0}^{k} A_{i+1}\zeta_i: \quad \sum_{i=0}^{k} A_{i+1} \sigma_i\zeta_i^2 \leq \frac{4}{3\gamma}D_0^2\right\},
    \end{equation*}
    through similar analysis in \citet[Lemma A.2]{monteiro2013accelerated} we have
    \begin{equation*}
        \sum_{i=0}^k A_{i+1}\|d_i\| \leq \sqrt{\frac{4}{3\gamma}}\cdot \sqrt{\sum_{i=0}^k \frac{A_{i+1}}{\sigma_i}} D_0,
    \end{equation*}
    therefore,
    \begin{equation}\label{eq.bound initial x}
        \|x_{k+1}-x^*\|  \leq \frac{1}{A_{k+1}}\left (\sum_{i=0}^k A_{i+1}\|d_i\|\right)+D_0  \leq_{(a)} \left(\sqrt{\frac{4}{3\gamma}}\cdot \sum_{i=0}^k\sqrt{\frac{1}{\sigma_i}}\cdot \sqrt{\frac{1}{A_{k+1}}} +1\right)D_0.
    \end{equation}
    $(a)$ is from the fact that $A_k$ is monotone and $2$-norm is majorized by $1$-norm. From \eqref{eq.update ak} we have
    \(
    a_k \geq \frac{1}{2\sigma_k}+\sqrt{\frac{A_k}{\sigma_k}},
    \)
    hence
    \begin{equation*}
        A_{k+1}\geq A_k+\frac{1}{2\sigma_k}+\sqrt{\frac{A_k}{\sigma_k}}\geq A_k+\frac{1}{4\sigma_k}+\sqrt{\frac{A_k}{\sigma_k}}.
    \end{equation*}
    Taking the square root of both sides,
    \begin{equation*}
        \sqrt{A_{k+1}}\geq \sqrt{A_k}+\frac{1}{2\sqrt{\sigma_k}},
    \end{equation*}
    Iterating the above inequality gives
    \begin{equation*}
        \sqrt{A_{k+1}}\geq \sum_{i=0}^k \frac{1}{2\sqrt{\sigma_i}}.
    \end{equation*}
    Plugging the above into \eqref{eq.bound initial x}, we have
    \begin{equation*}
        \|x_{k+1}-x^*\| \leq \left( \frac{4}{\sqrt{3\gamma}}+1\right) D_0.
    \end{equation*}
    For $v_{k+1}$, we have \eqref{eq.bounded iterate}, \eqref{eq.bounded iterate y} and \eqref{eq.ub yk sigma} hold for iteration $k$. As a result, the bisection search procedure is valid and \eqref{eq.relation lambda sigma} holds for the $k$-th iteration due to Corollary~\ref{coro.bisection bound}. Hence $\|v_{k+1}-x^*\|\leq D_0$.

    To prove the boundedness of $y_{k+1}(\sigma)$, just note that $y_{k+1}(\sigma)$ is a linear combination of $v_{k+1}$ and $x_{k+1}$.
\end{proof}


\clearpage

\tableofcontents

\clearpage

\end{document}